\author{Shuigen Liu\thanks{National University of Singapore (\email{shuigen@u.nus.edu},\email{mattxin@nus.edu.sg})}
\and Sebastian Reich\thanks{Institut für Mathematik, Universität Potsdam (\email{sebastian.reich@uni-potsdam.de}).}
\and Xin T. Tong\footnotemark[2]}
\newcommand{\bx}{{\bm{x}}}
\definecolor{darkred}{rgb}{.6,0,0}
\definecolor{darkblue}{rgb}{0,0,.7}
\definecolor{darkgreen}{RGB}{0,100,0}
\definecolor{darkbrown}{rgb}{0.8,0.4,0.4}
\title{Dropout ensemble Kalman inversion for high dimensional inverse problems\thanks{Submitted to the editors DATE.}}
\begin{document}

\maketitle

\begin{abstract}
    Ensemble Kalman inversion (EKI) is an ensemble-based method to solve inverse problems. 
    Its gradient-free formulation makes it an attractive tool for problems with involved formulation. 
    However, EKI suffers from the “subspace property”, i.e., the EKI solutions are confined in the subspace spanned by the initial ensemble. It implies that the ensemble size should be larger than the problem dimension to ensure EKI’s convergence to the correct solution. Such scaling of ensemble size is impractical and prevents the use of EKI in high dimensional problems.
    To address this issue, we propose a novel approach using dropout regularization to mitigate the subspace problem. We prove that dropout-EKI converges in the small ensemble settings, and the computational cost of the algorithm scales linearly with dimension. We also show that dropout-EKI reaches the optimal query complexity, up to a constant factor. Numerical examples demonstrate the effectiveness of our approach.
\end{abstract} 

% REQUIRED
\begin{keywords}
Ensemble Kalman inversion, dropout, high dimension, small ensemble
\end{keywords}

% REQUIRED
\begin{MSCcodes}
65K10, 90C56, 65M32
\end{MSCcodes}

\section{Introduction} 
Inverse problems arise in various scientific and engineering problems \cite{MR2652785,MR4226141,MR3826506,MR1408680}, where one is interested in recovering unknown parameters from indirect and often noisy observations. Mathematically, the relationship between the unknown parameter, denoted by $u\in\mR^{d_u}$,  and the observed data, denoted by $y\in \mR^{d_y}$, can often be described by the following model: 
\begin{equation}
\label{eq:InverseProblem}
y = \mcG (u) + \eta.
\end{equation}
Here $\mcG:\mR^{d_u} \gto \mR^{d_y}$ is the forward map. It describes the physical law behind the observation procedure, and it is often corrupted by some observation noise, which is modeled by $\eta\in\mR^{d_y}$ in \cref{eq:InverseProblem}. 

Directly solving \cref{eq:InverseProblem} is often problematic, since the inverse of $\mcG$ might not be well-defined, and we need to mitigate the corruption from noise. A standard way to handle these issues is regularization \cite{MR3826506,MR1408680}. In this paper, we will focus on the well-known Tikhonov regularization \cite{MR4089506} approach, which is also referred to as $l_2$ or ridge regularization. Mathematically, this involves finding the minimizer $u^*$ of the loss function
\begin{equation}
\label{eq:TikReg}
l(u) = \frac{1}{2} \normo{y - \mcG (u)}^2 + \frac{1}{2} \normo{\mcC_0^{-1/2}u}^2.
\end{equation}
The first part of \cref{eq:TikReg} is the data misfit, which measures the fitness of a candidate solution $u$ when compared with data. The second part of \cref{eq:TikReg} is the regularization term, it measures how well $u$ fits some prior information, e.g. sparsity, intensity and smoothness. Here we choose the Tikhonov regularization, so the prior information is encoded through a positive-definite structural matrix $\mcC_0$. It is worth pointing out that \cref{eq:TikReg} can be seen as negative log-density of the Bayesian posterior of \cref{eq:InverseProblem}, if we assume $u$ has prior $\mathcal{N}(0, \mcC_0)$ and $\eta\sim \mathcal{N}(0, I)$. In this context, the minimizer $u^*$ will become the maximum-a-priori estimator of the posterior density. More details of such connection can be found in \cite{MR2652785,MR3041539,MR3654885}.

Minimizing \cref{eq:TikReg} can be computational challenging in practice. In many applications, $\mcG$ is formulated by a PDE, of which the derivative (or adjoint model) can be too complicate to derive or too large to process.
Ensemble Kalman inversion (EKI) is a derivative-free computational method well-suited for these applications. In short, it employs an ensemble with a Kalman-like formula to avoid the computation of derivatives. By iterating the formula, the EKI ensemble will collapse to a single point which minimizes \cref{eq:TikReg}. EKI has shown impressive inversion skills in various problems \cite{MR3041539,MR3998631}. A series of numerical analyses of EKI have been made recently to justify EKI's performance in linear, nonlinear and stochastic settings  \cite{MR3654885,MR3764752,MR3988266,MR4405495,MR4482475,MR4487558}.

One fundamental problem of EKI is its implementation on high dimensional problems.
This is characterized by the ``subspace property" of EKI \cite{MR3041539,MR3764752}, which indicates that all ensemble members remains in the linear subspace $\mcV_0$ spanned by the initial ensemble. This implies that if we want to obtain $u^*$, $\mcV_0$ needs to be rich enough and the ensemble size needs to surpass $d_u$ in general. But such scaling in ensemble size is impractical for large-scale problems. In practice, this problem can be mitigated by exploiting certain specific inverse problem structures. For example, \cite{chada2018parameterizations} discusses how to select the best $\mcV_0$ so $u^*$ is likely to be in $\mcV_0$. In another recent work,  \cite{MR4580673} considers localization techniques, which can leverage the spatial dependence within an inverse problems. On the other hand, these methods all require certain prior knowledge of the solution structures, so they cannot be universal. 

In this paper, we propose to tackle the subspace issue using dropout technique originated from the deep learning literature \cite{JMLR:v15:srivastava14a,pmlr-v28-wan13,NIPS2015_bc731692,pmlr-v48-gal16}.
In the training process of deep neural networks, the dropout technique randomly dropout neurons to avoid overfitting. In theory, it is usually understood as a regularization method \cite{JMLR:v15:srivastava14a,pmlr-v28-wan13,10.5555/2999611.2999651} or a Bayesian neural network approximation \cite{pmlr-v48-gal16}. In \cite{MR4568201}, dropout is proposed as a heuristic for EKI when implemented on high dimensional logistic regression problems. The idea is justified in \cite{MR4568201} with numerical evidences, but there is not much rigorous understanding.
In this paper, we propose a new formulation of  dropout EKI (DEKI), so it can be applied to high dimensional general nonlinear inverse problems with a fixed ensemble size.  

Moreover, we prove that by properly adapting the step sizes, DEKI scheme converges exponentially fast for linear problems, or strongly convex nonlinear problems in the final stage. The overall computational cost of DEKI scales linearly with dimension $d_u$, which is optimal for high dimensional problems. We also investigate the query complexity of the zeroth order algorithms for inverse problems, and show that the DEKI scheme achieves optimal query complexity up to a constant in the problem class we consider.

The paper is organized as follows. In \cref{sec:DEKIDerive}, we derive the DEKI scheme and compare it with related methods. In \cref{sec:analysis}, we provide the convergence analysis of the DEKI, showing the exponential convergence rate. The query complexity is also analyzed. In \cref{sec:numerics}, we show the numerical performance of DEKI on two examples, confirming our theoretic results.

\subsection{Notations} \label{sec:notations}
For two matrices $A,B \in \mR^{m\times n}$, denote the Hadamard product $C=A\circ B \in \mR^{m \times n}$ where $C_{ij} = A_{ij} B_{ij}$. For two positive semi-definite (PSD) matrices $A,B\in\mR^{m\times m}$, denote $A\succeq B$ iff $A-B$ is PSD. PSD matrices admit standard eigenvalue decomposition $A = Q\Lambda Q\matT $, where $\Lambda$ is a diagonal matrix and $Q$ is an orthogonal matrix. Define the function of PSD matrix $f(A)$ by
\[
f(A) = Q ~\diag\{f(\Lambda_{11}),\dots, f(\Lambda_{mm})\} ~Q\matT .
\]
For sequential random variables $X_1,X_2,\dots$, denote $\mE_n$ as the conditional expectation conditioned on the information up until $n$, i.e. $\mE_n [Y] = \mE[Y|\mcF_n]$ where $\mcF_n $ is the $\sigma$-algebra generated by $X_1,\dots,X_n$.

\section{Dropout EKI scheme}
\label{sec:DEKIDerive}
In this section, we start with a brief review of EKI formulation designed for \cref{eq:InverseProblem}. We then implement the  dropout technique and derive DEKI. Finally we provide some comparison of DEKI with related EKI schemes.

\subsection{Review: EKI and subspace property}
When minimizing the regularized loss function \cref{eq:TikReg}, it is instructive to formulate it as
\begin{equation}
\label{eq:RegProblem}
    l(u) = \frac{1}{2}\norm{z - \mcH(u)}^2, \quad \text{where  } \mcH (u) = 
    \begin{bmatrix}
        \mcG (u) \\
        \mcC_0^{-1/2} u
    \end{bmatrix}, ~ z = 
    \begin{bmatrix}
        y \\
        0
    \end{bmatrix}.
\end{equation}
\cite{MR4089506} investigated applying vanilla EKI from \cite{MR3041539} on \cref{eq:RegProblem}. In particular, if we denote the ensemble as $\{u_n^{(j)}\}_{j=1}^J$, the (Tikhonov) EKI scheme reads
\begin{equation}
\label{eq:EKI0}
    u_{n+1}^{(j)} = u_{n}^{(j)} + h_nC_{n}^{uz} ( h_nC_{n}^{zz} + I )^{-1} ( z - \mcH(u_{n}^{(j)})),
\end{equation}
where $C_{n}^{uz},C_{n}^{zz}$ are the empirical covariance matrices of the deviations, and $I$ denotes the identity matrix. Denote
\begin{equation}
\label{eq:Cov}
    \begin{split}
    & \mean{u}_n = \frac{1}{J} \sum_{j=1}^J u_n^{(j)},\quad \tau_n^{(j)}=u^{(j)}_n-\mean{u}_n, \quad \mean{\mcH(u_n)} = \frac{1}{J} \sum_{j=1}^J \mcH(u_n^{(j)}).\\
     &C_{n}^{uu} = \frac{1}{J-1} \sum_{j=1}^J \tau_n^{(j)} \otimes \tau_n^{(j)},\quad
C_{n}^{uz} = \frac{1}{J-1} \sum_{j=1}^J \tau_n^{(j)} \otimes \Brac{ \mcH(u_n^{(j)}) - \mean{\mcH(u_n)}} \\
    &C_{n}^{zz} = \frac{1}{J-1} \sum_{j=1}^J \Brac{ \mcH(u_n^{(j)}) - \mean{\mcH(u_n)}} \otimes \Brac{ \mcH(u_n^{(j)}) - \mean{\mcH(u_n)}}.
    \end{split}
\end{equation}
The $h_n$ in \cref{eq:EKI0} is the ``stepsize" of the algorithm. The continuous limit formulation can be derived if we fix $h_n\equiv h$ and let $h\to 0$. \cite{MR4405495} has shown that if $h_n$ increases with $n$, the convergence speed can be improved. 

The implementation of EKI involves iterating \cref{eq:EKI0}. In general, the EKI ensemble will collapse onto a single point, and we can use the ensemble mean $\bar{u}_n$ as a candidate minimizer of \cref{eq:InverseProblem}. When we have sufficient ensemble, $J>d_u$, the update \cref{eq:EKI0} can be interpreted as a Gauss-Newton type iteration \cite{MR4405495}.

\subsection{Naive DEKI}
Using mathematical induction, it is not difficult to show that $u_n^{(j)}\in \mcV_0=\text{span}\{u_0^{(j)}\}_{j=1}^J$, which is the ``subspace property" discussed in the introduction.  At a high level, it is not hard to break the ``subspace property", since we can modify \cref{eq:EKI0} so that the new ensembles leave $\mcV_0$ (e.g. adding naive
randomization). But we also need some guidelines to find reasonable modifications to ensure it converges correctly. We derive these guidelines from a variational perspective. In specific, the fixed point of the iterations should be the minimizer  (or a stationary point) of \cref{eq:TikReg}.

For the simplicity of illustration, we will assume the problem is linear, so $\mcH(u)=Hu$ for some matrix $H$. In this case, $C^{uz}_n=C^{uu}_n H\matT $.
Then if $u^{(j)}_n$  is the stationary point of \cref{eq:EKI0}, we find 
\begin{equation}    \label{tmp:opt1}
0=C^{uu}_n H\matT (h_n HC^{uu}_nH\matT +I)^{-1} (z-H u_{n}^{(j)}).     
\end{equation}
Meanwhile, $u^{(j)}_n$ is a minimizer of \cref{tmp:opt1}, iff $0=H\matT (z-H u_{n}^{(j)}).$ This can be guaranteed by \cref{tmp:opt1} if $C^{uu}_n$ has full rank $d_u$. But with vanilla EKI, the rank of $C^{uu}_n$ is at most $J-1$, which is less than $d_u$ for $J\le d_u$.  

Next, we investigate how dropout technique can resolve this issue. Since rank deficiency of $C^{uu}_n$ is the root of the problem, we consider applying dropout on the ensemble deviations $\tau_n^{(j)}=u_n^{(j)}-\mean{u}_n$, which leads to a dropout ensemble $\{\tilde{u}_n^{(j)}\}_{j=1}^{J}$,
\begin{equation}
\label{eq:DropEnsem}
    \tilde{u}_n^{(j)} = \mean{u}_n + \tilde{\tau}_n^{(j)}, \quad \tilde{\tau}_n^{(j)} = \rho_n \circ \tau_n^{(j)}, \quad \rho_n(s) \topsm{\iid}{\sim} \text{Bernoulli}(\lambda).
\end{equation}
Here $1-\lambda \in (0,1)$ is the dropout rate, $\rho_n \in \mR^{d_u}$ is the dropout mask vector, which has components being i.i.d  Bernoulli random variables  indicating the indices of the dropout components, i.e. $ \{ s: \rho_n(s) = 0 \}$. In \eqref{eq:DropEnsem}, $\circ$ denotes the Hadamard product, i.e. $a\circ b (i) = a(i) b(i)$. Note here we take the dropout mask vector $\rho_n$ to be the same over all ensemble members. Such dropout technique follows the idea in \cite{MR4568201}, and we generalize it by introducing the dropout ensemble $\tilde{u}_n^{(j)}$, making it conveniently applicable in general nonlinear problems. In principle, one can also use different dropout mask vectors for different ensemble members. We have tested this variant, but its numerical performance is similar to our version and its analysis harder, so we omit its discussion in this paper. 

Throughout the paper, we will add tilde on symbols related to the dropout ensemble.  In particular, we have the dropout ensemble covariance:
\begin{equation}
\label{eq:dropCov}
    \begin{split}
    &\tilde{C}_{n}^{uu} = \frac{1}{J-1} \sum_{j=1}^J \tilde{\tau}_n^{(j)} \otimes \tilde{\tau}_n^{(j)},\quad\tilde{C}_{n}^{uz} = \frac{1}{J-1} \sum_{j=1}^J \tilde{\tau}_n^{(j)} \otimes \Brac{ \mcH(\tilde{u}_n^{(j)}) - \mean{\mcH(\tilde{u}_n)}}, \\
    &\tilde{C}_{n}^{zz} = \frac{1}{J-1} \sum_{j=1}^J \Brac{ \mcH(\tilde{u}_n^{(j)}) - \mean{\mcH(\tilde{u}_n)}} \otimes \Brac{ \mcH(\tilde{u}_n^{(j)}) - \mean{\mcH(\tilde{u}_n)}}.
    \end{split}
\end{equation}
Replacing the empirical covariance $C_n^{uz}$ and $C_n^{zz}$ in \cref{eq:EKI0} with their dropout versions will lead to the naive DEKI. 
From numerical experiments, we find it can already resolve the ``subspace property", namely it will converge to accurate numerical inversion result even if $J<d_u$, although the convergence speed might be slow. We will add further numerical modifications to accelerate the convergence. 

But before that, it is instructive to provide the intuition why dropout can help with the rank deficiency of sample covariance. Similar to other stochastic algorithms such as stochastic gradient descent, the average effect of using $\tilde{C}^{uu}_n$ can be characterized by the its conditional average $\mE_n \tilde{C}_n^{uu}$, which is taken conditioned on the information of the current ensemble $\{u_n^{(j)}\}_{j=1}^J$. With some elementary probability, it can be derived that (see \cref{eq:ExpC})
\[
\mE_n \tilde{C}_n^{uu} = \lambda(1-\lambda) \diag(C_n^{uu}) + \lambda^2 C_n^{uu}\succeq \lambda(1-\lambda) \diag(C_n^{uu}), 
\]
where $\diag(C_n^{uu})$ is the diagonal part of $C_n^{uu}$, i.e. $ \diag(C_n^{uu}) (s,t) = \delta_{st} C_n^{uu} (s,t)$, where $\delta_{st}$ is the standard Kronecker delta. It is easy to see that $\diag(C_n^{uu})$ is full rank if the diagonal elements of $C_n^{uu}$ are strictly positive.

\subsection{DEKI with mean--deviation separation}
The convergence of naive DEKI is slow because there is a tradeoff between approximation and optimization: in order to approximate the nonlinear map $\mcH$ more accurately, we would need the ensemble  covariance collapse quickly; but if the ensemble collapse too quickly, the ensemble will stop moving based on \cref{eq:EKI0}. Moreover, analyzing the convergence of naive DEKI is also very challenging, since $\tilde{C}^{uu}_n$ is very difficult to track. 

An easy way to resolve this issue is to use separate dynamics for the mean and deviation update. To illustrate, note that \cref{eq:EKI0} can also be formulated as 
\begin{subequations}
\begin{align}
    \mean{u}_{n+1} =~& \mean{u}_{n} + h_n C_{n}^{uz} ( I + h_n C_{n}^{zz} )^{-1} ( z - \mean{\mcH(u_{n})} ), \label{eq:EKImean} \\
    \tau_{n+1}^{(j)} =~& \tau_{n}^{(j)} + h_n C_{n}^{uz} ( I + h_n C_{n}^{zz} )^{-1} ( \mean{\mcH(u_{n})} - \mcH(u_{n}^{(j)}) ). 
\label{eq:EKIdevi}
\end{align}
\end{subequations}
We will use different $h_n, C_n^{uz} $ and $ C_n^{zz}$ for the two separated dynamics. 

Since the ensemble mean will be used as the candidate solution, we hope the stationary point of its dynamics will also be a stationary point of $l(u)$. Therefore we will use the dropout covariances in \cref{eq:EKImean}. We will also use a step size $\tilde h_n=\tilde h\|C^{uu}_n\|^{-1}$, an adaptive choice adopted from \cite{MR3998631}. Note the factor $\|C^{uu}_n\|^{-1}$ is added to counter balance the fact that the ensemble will collapse, i.e. $C^{uu}_n\to 0$. This leads to
\begin{equation}
\label{eq:DEKImean} 
    \mean{u}_{n+1} = \mean{u}_{n} +  \tilde{h}_n \tilde{C}_{n}^{uz} ( I +  \tilde{h}_n \tilde{C}_{n}^{zz} )^{-1} ( z - \mcH(\mean{u}_{n}) ),
\end{equation}
where $\tilde{C}_{n}^{uz}, \tilde{C}_{n}^{zz}$ are generated by \cref{eq:dropCov}. 

As for the dynamics of ensemble deviation \cref{eq:EKIdevi}, we seek a modification where analysis of the ensemble covariance $C^{uu}_n$ is available. When $\mcH$ is a linear map, $C^{uu}_n$ is known to have  explicit Kalman type update formula \cite{MR3041539,MR3654885}. For this reason, we seek a linearization of \cref{eq:EKIdevi}, in the form of  
\begin{equation}
\label{eq:DEKIdevi} 
    \tau_{n+1}^{(j)} = \tau_{n}^{(j)} - h_n C_{n}^{uu}H_n\matT  ( I + h_n H_n C_{n}^{uu}H_n\matT  )^{-1} H_n \tau_n^{(j)}.
\end{equation}
We set the step size $h_n=h\|C^{uu}_n\|^{-1}$, where $h$ is a constant that can be smaller than $\tilde h$, so the ensemble will not collapse too fast.

In order for \cref{eq:DEKIdevi} to approximate \cref{eq:EKIdevi}, a natural choice will be 
\[
    H_n=C_n^{zu}(C_n^{uu})^\dagger, 
\]
where $(C_n^{uu})^\dagger$ is the pseudo-inverse of $C_n^{uu}$. Notably, \cref{eq:DEKIdevi} will then be identical to \cref{eq:EKIdevi} if $\mcH$ is linear. In our numerical test, this choice already works. But for theoretical analysis below, we would require $H_n$ to be uniformly bounded w.r.t. $n$ in $l_2$ norm, which may not hold for all nonlinear maps. On the other hand, we note that a truncated version is always available by linearzing the forward map:   
\begin{equation}
\label{eq:linearize}
    H_n = \begin{bmatrix}
        G_n  \\
        \mcC_0^{-1/2}
    \end{bmatrix} ,\quad G_n := \argmin_{\normo{G}_2\leq M_G} \norm{ C_n^{zu}(C_n^{uu})^\dagger - \begin{bmatrix}
        G  \\
        \mcC_0^{-1/2}
    \end{bmatrix} }_{\rm F}^2.
\end{equation}
Here $M_G$ is some big prefixed constant and $\norm{\cdot}_{\rm F}$ denotes the Frobenius norm. The optimization problem of \cref{eq:linearize} can be solved simply by truncation on the singular value, see details in \cref{app:linearize}.  

In summary, the {\bf dropout EKI (DEKI)} scheme is given by the mean update \cref{eq:DEKImean} using dropout ensemble covariance, and the deviation update \cref{eq:DEKIdevi} using a truncated linearization \cref{eq:linearize}. The algorithm is provided as \cref{alg:DEKI}. We note that while its formulation is more involved than EKI \cref{eq:EKI0}, DEKI's computational cost per iteration is only a constant multiple of the EKI. Our numerical analysis below will show that DEKI will obtain a solution of $\epsilon$ accuracy with $O(d_u \log\epsilon^{-1} /J)$ iterations when the problem is strongly convex. 

\begin{algorithm}
\caption{Dropout EKI}
\label{alg:DEKI}
\begin{algorithmic}[1]
\setstretch{1.15}
\STATE{Input dropout rate $1-\lambda$, reference step size $h,\tilde{h}$. Initialize $\{u_0^{(j)}\}_{j=1}^J, n=0$}.
\WHILE{not converge}
\STATE Update $n\gets n+1$.
\STATE{Generate $\rho_n(s) \topsm{\iid}{\sim} \text{Bernoulli}(\lambda)$ and compute dropout ensemble 
\[
\tilde{u}_n^{(j)} = \mean{u}_n + \tilde{\tau}_n^{(j)}, \quad \tilde{\tau}_n^{(j)} = \rho_n \circ \tau_n^{(j)}.
\] }
\ \vspace{-15pt}
\STATE{Compute the dropout covariance $\tilde{C}_n^{uz},\tilde{C}_n^{zz}$ by \cref{eq:dropCov}.}
\STATE{Find the linearized map $H_n$ by solving \cref{eq:linearize}.}
\STATE{Determine the step sizes $h_n = h \|C^{uu}_n\|^{-1}, \tilde{h}_n = \tilde{h} \|C^{uu}_n\|^{-1}$.}
\STATE{Update the mean and deviations separately by \cref{eq:DEKImean} and \cref{eq:DEKIdevi}.}
\ENDWHILE
\end{algorithmic}
\end{algorithm}

\begin{rems}
(1) We can also use $\tilde{C}^{uu}_n$ in \cref{eq:DEKIdevi}. We observe in numerical tests that as long as the ensemble is stable, using dropout in \cref{eq:DEKIdevi} behaves similarly as the DEKI scheme. However, this variant is difficult to analyze rigorously, so we present our current formulation.

(2) The step size $\tilde{h}_n = \tilde{h} \normo{C_n^{uu}}^{-1}$ offsets the ensemble collapse and ensures the fast convergence of EKI. Another popular choice is the constant step size $h_n \equiv h$, which is favorable when one wishes to allow noisy update scheme, or the smoothness constant is hard to estimate. But such choice leads to slower convergence \cite{MR3998631}, and with rate that might scale with the dimension $d_u$. One can also consider $h_n = h n^{-\alpha}$ used in \cite{MR4405495}, which leads to accelerated convergence. We mainly consider the first choice here, and the thorough comparison of different step size choices is out of scope of this paper.  
\end{rems}

\subsection{Related literature}

\subsubsection{EKI methodology}
EKI has been introduced in \cite{MR3041539} as an extension of the popular ensemble Kalman filter (EnKF) methodology to the treatment of inverse problems in the context of optimization. Both the EnKF and EKI have recently been surveyed in \cite{calvello22} primarily from a mean field perspective. However, practical algorithm use finite ensemble sizes and establishing convergence and computational efficiency for nonlinear problems have remained an open problem.

%Since the introduction of the EnKF method to the inverse problems in \cite{MR3041539}, a lot of work has been done to understand the theoretic properties. It is mainly analyzed in the continuous time \cite{MR3654885,MR3764752,MR3988266,MR4089506,MR4580673,MR4482475}, linear \cite{MR3654885,MR3764752,MR4558759} or mean field \cite{Ding2021,MR4482475,MR4558759} settings. However, the practical algorithm is run at discrete time using finite ensemble, and the problems are rarely linear. 

Here we focus first on the finite ensemble size effect and later also address the issue of computational efficiency. As discussed before, the subspace property will prevent the ensemble to converge to the optimal solution. However, some asymptotic behavior of EKI can be analyzed in linear \cite{MR3654885,MR3764752} and nonlinear \cite{MR4089506,MR4482475} cases. Roughly speaking, the ensemble converges to the optimal solution projected onto the subspace spanned by the initial ensemble. Localization \cite{MR4580673} is one method to overcome this issue, which will be discussed later in detail. Interested readers are referred to \cite{MR4405495,MR4482475} for further recent results on EKI.

EKI can also be understood as a zeroth order optimization method \cite{MR3041539,MR4089506,MR4405495}. In particular, it can be viewed as a gradient flow using the empirical covariance as a preconditioner \cite{MR3654885,MR4482475}. It approximates the gradients by the finite differences, making the method derivative-free. Among zeroth order optimization methods, it is known that there are methods with complexity $\mcO(d_u \log \varepsilon^{-1})$ to achieve $\varepsilon$ accuracy for strongly convex functions, such as the Nesterov random search method \cite{MR3627456}. The complexity of DEKI matches this result, and in fact we show that this rate is optimal even for linear inverse problems. 

\subsubsection{Dropout in deep learning}
Dropout technique is first introduced in neural network (NN) by Hinton et al. \cite{JMLR:v15:srivastava14a} as an implicit regularization technique to avoid overfitting. The idea is to randomly omit parts of the neurons in a NN during training. This method has proven to be very effective in improving the performance of NN in a wide range of tasks \cite{JMLR:v15:srivastava14a} and is accepted as a common practice now. There is a long literature exploring the practical implementation of different dropout methods in different types of NNs. Popular methods include the original dropout \cite{JMLR:v15:srivastava14a}, dropconnect \cite{pmlr-v28-wan13}, variational dropout \cite{NIPS2015_bc731692}, Monte Carlo dropout \cite{pmlr-v48-gal16} etc. 

Despite its rich application in deep learning, theoretic understanding of dropout is still limited. Most practitioners regard dropout as an implicit regularization method \cite{JMLR:v15:srivastava14a,pmlr-v28-wan13,10.5555/2999611.2999651}. By randomly dropping out neurons, dropout decreases the dependence on individual neurons to avoid overfitting. As a regularization method, dropout can also be viewed as a noisy perturbation of the output of the neurons, which encourages the model to learn more robust features. Another  explanation is to interpret dropout as a Bayesian approximation \cite{pmlr-v48-gal16}, where a probability distribution is imposed over the parameters, and dropout is used to draw samples of the parameters approximately. Then the ability to avoid overfitting can be justified by the use of a Bayesian model. 

The dropout method we use in DEKI is  similar to dropout NN only at a high level. Dropout in DEKI decreases the dependence on different parameters to avoid spurious long-distance correlation over the components, which is a well-known issue especially in the high dimensional and small ensemble settings. On the other hand, dropout in DEKI has different structures compared to that in the NN. One obvious difference is that dropout acts linearly in DEKI, while it acts highly nonlinearly in NN. Another difference is that dropout in DEKI is applied to the deviations, so that it preserves the structure of the parameters. But in NN, dropout might change the topology of the parameters. The exact reason and benefits of dropout for EKI should be analyzed independently. 

\subsubsection{Localized EKI}
Localization is a crucial technique used in EnKF to deal with the sampling errors due to a limited ensemble size \cite{1998MWRv..126..796H,2001MWRv..129..123H,2001MWRv129.7690,doi:10.3402/tellusa.v56i5.14462}. The idea is to artificially reduces the covariance between distant locations, typically by applying a distance-based weighting function. \cite{MR4580673} investigated the application of localization in EKI, and showed that the localized EKI (LEKI) scheme can be applied in the small ensemble settings and overcome the subspace issue of EKI effectively. 

Our scheme bears a resemblance to the LEKI, but avoids the potentially-involved design of
%complex designing of 
the localization methods, and is computationally cheaper since DEKI can be implemented using only matrix-vector multiplication while LEKI cannot. At a high level, DEKI can also be viewed as a stochastic implementation of the LEKI, see \cref{eq:ExpC} for details. We also emphasize that there are other possible ways to apply dropout in EKI, and here we just choose one version to illustrate the general idea and show theoretic properties.

\section{Analysis of DEKI}
\label{sec:analysis}
The analysis is divided into two parts. First we show the ensemble collapses in a controllable way by studying the dynamic of the spectrum of the ensemble covariance, from which we obtain lower bounds of the diagonal entries. Second we prove the convergence of DEKI by approximating it locally by a Gauss–Newton type scheme and derive contraction using the lower bounds obtained before. Here an important observation is that the dropout emphasizes the diagonal elements of the ensemble covariance in average, see \cref{eq:ExpC} for details.

\subsection{Assumptions} We make the following assumptions on the regularized forward operator $\mcH$ \cref{eq:RegProblem}, the linearized map $H_n$ \cref{eq:linearize} and the $l^2$-loss $l(u)$ \cref{eq:TikReg}.
\begin{asms}
\label{asm:main}
The following conditions hold:
\begin{enumerate}
    \item Bounded Hessian. $\mcH\in C^2$ and $\exists H>0$ s.t.
    \begin{equation}
    \label{eq:HessBound}
    \sup_u \left(\sum_{s=1}^{d_z} \norm{\nabla^2 \mcH_s(u)}_2^2 \right)^{1/2} \leq H,
    \end{equation}
    where $\mcH_s$ denotes the $s$-th component of $\mcH$. 
    \item Boundedness of $H_n$. $\exists 0<\gamma \leq M$ s.t. $\forall n\in\mN$,
    \begin{equation}
    \label{eq:LineHBound}
        \gamma^2 I \preceq H_n\matT  H_n \preceq M^2 I.
    \end{equation}
    \item $L$-smoothness of $l(u)$. $\exists L>0$, s.t. $\forall u,v \in\mR^{d_u} $,
    \begin{equation}
    \label{eq:Lsmooth}
        \normo{\nabla l(u) - \nabla l(v)} \leq L \norm{u-v}.
    \end{equation}
    \item Polyak-Łojasiewicz (PL) condition for $l(u)$. $\exists c>0$ s.t. $\forall u \in\mR^{d_u}$,
    \begin{equation}
    \label{eq:PL}
        \norm{\nabla l(u)}^2 \geq c (l(u) -l_{\min}),
    \end{equation}
    where $l_{\min} := \min_{u} l(u)$. 
\end{enumerate}
\end{asms}
\begin{rems}
\label{rem:asm_main}
(1) The bounded Hessian condition \cref{eq:HessBound} directly follows from that of the forward map $\mcG$. The boundedness of $H_n$ can be achieved when we take $H_n = [G_n\matT,\mcC_0^{-1/2}]\matT$ defined in \cref{eq:linearize}. Notice $  H_n\matT H_n = G_n\matT G_n + \mcC_0^{-1} $, since $\norm{G_n}\leq M_G$, suppose $\mcC_0^{-1} \succeq \gamma_0^2 I $, then \eqref{eq:LineHBound} holds with $\gamma = \gamma_0$ and $M = \sqrt{ M_G^2 + \norm{\mcC_0^{-1}} } $.

(2) The $L$-smoothness and PL condition for $l(u)$ are standard assumptions in optimization. Note PL condition is a weaker condition than strong convexity, and is widely adopted in the machine learning literature when lacking convexity \cite{MR4412181}.
\end{rems}

\subsection{Controllable ensemble collapse}
\label{sec:ensemble_ctrl}
The linearization \cref{eq:linearize} applied to the covariance evolution produces a behavior similar to that of a linear forward map. It allows us to have good controls on the ensemble collapse as follows:
\begin{proposition}
\label{prop:ensemble_collapse}
Consider the DEKI scheme for the regularized problem \cref{eq:RegProblem}. Assume that the linearized map $H_n$ satisfies $\gamma^2 I \preceq H_n\matT  H_n \preceq M^2 I$. Denote $C_n^{uu}$ as the empirical covariance. Choose adaptive step sizes $h_n= \theta \norm{C_n^{uu}}^{-1}$, where $\theta\leq M^{-2}$. Then the modified condition number of $C_n^{uu}$ is uniformly bounded:
\begin{equation}
\label{eq:KappaBound}
    \forall n\in\mN, \quad \kappa(C_n^{uu}) := \frac{\lambda_1(C_n^{uu})}{\lambda_r(C_n^{uu})} \leq \bar{\kappa} = \max \left\{ \kappa(C_0^{uu}), \frac{3M^2}{2\gamma^2} \right\}.
\end{equation}
where $\lambda_k(C_n^{uu})$ is the $k$-th largest eigenvalues of $C_n^{uu}$ and $r$ is the rank of $C_n^{uu}$, which is invariant under evolution: $
\rk (C_n^{uu}) \equiv r := \rk (C_0^{uu})$. Moreover, the ensemble collapses exponentially,
\begin{equation}
    \normo{C_{n}^{uu}} \leq \normo{C_0^{uu}} (1 + \gamma^2 \theta)^{-2n} .
\end{equation}
\end{proposition}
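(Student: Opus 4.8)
The plan is to exploit that the deviation update \cref{eq:DEKIdevi} is \emph{linear} in $\tau_n^{(j)}$, so the whole statement reduces to tracking a congruence recursion for $C_n^{uu}$. Writing $\tau_{n+1}^{(j)} = A_n\tau_n^{(j)}$ with $A_n = I - h_n C_n^{uu}H_n\matT(I+h_nH_nC_n^{uu}H_n\matT)^{-1}H_n$, and noting that $\sum_j\tau_n^{(j)}=0$ is preserved, I get $C_{n+1}^{uu} = A_nC_n^{uu}A_n\matT$. The first key step is to recognize $A_n$ as a Kalman-type map: by the push-through (Woodbury) identity, $A_n = (I + h_nC_n^{uu}H_n\matT H_n)^{-1}$, which is \emph{invertible}. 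Invertibility immediately gives the rank invariance $\rk(C_{n+1}^{uu})=\rk(C_n^{uu})$, and since $A_n^{-1} = I + h_nC_n^{uu}H_n\matT H_n$ sends any vector into itself plus an element of $\mcV_0 := \mathrm{range}(C_0^{uu})$, the subspace $\mcV_0$ is invariant. Hence everything can be analyzed on the fixed $r$-dimensional space $\mcV_0$, on which every $C_n^{uu}$ is positive definite.

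Next I would convert the congruence into a form amenable to eigenvalue tracking. Using the identity $A_n(C_n^{uu})^{1/2} = (C_n^{uu})^{1/2}(I+h_nW_n)^{-1}$ with $W_n := (C_n^{uu})^{1/2}H_n\matT H_n(C_n^{uu})^{1/2}$, the update collapses to the symmetric recursion $C_{n+1}^{uu} = (C_n^{uu})^{1/2}(I+h_nW_n)^{-2}(C_n^{uu})^{1/2}$; equivalently, the precision obeys $(C_{n+1}^{uu})^{-1} = (C_n^{uu})^{-1} + 2h_n\hat H_n + h_n^2\hat H_nC_n^{uu}\hat H_n$ on $\mcV_0$, where $\hat H_n$ is the compression of $H_n\matT H_n$, so $\gamma^2 I \preceq \hat H_n \preceq M^2 I$. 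From the symmetric form one reads off the Rayleigh-quotient identities $\lambda_1(C_{n+1}^{uu}) = \max_u R_n(u)$ and $\lambda_r(C_{n+1}^{uu}) = \min_u R_n(u)$, where $R_n(u) = \langle u, C_n^{uu}u\rangle / \norm{(I+h_nW_n)u}^2$ and the optimization runs over $\mcV_0$.

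The heart of the argument is a sharp two-sided per-step estimate. For the top eigenvalue I would lower-bound the denominator by completing the square: with $q := \langle u, C_n^{uu}u\rangle / \norm{u}^2$, the bound $W_n \succeq \gamma^2 C_n^{uu}$ and Cauchy--Schwarz give $\norm{(I+h_nW_n)u}^2 \geq \norm{u}^2(1+h_n\gamma^2 q)^2$. Since $h_n = \theta\normo{C_n^{uu}}^{-1}$ and $\theta\gamma^2 \leq \theta M^2 \leq 1$, the map $q\mapsto q/(1+h_n\gamma^2 q)^2$ is increasing on $[\lambda_r,\lambda_1]$, yielding the decisive contraction $\lambda_1(C_{n+1}^{uu}) \leq (1+\theta\gamma^2)^{-2}\lambda_1(C_n^{uu})$; iterating this gives the collapse $\normo{C_n^{uu}} \leq \normo{C_0^{uu}}(1+\gamma^2\theta)^{-2n}$. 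I stress that the quadratic term is essential: Weyl's inequality on the additive precision recursion only produces the factor $1+2\theta\gamma^2 \leq (1+\theta\gamma^2)^2$, which is too weak for the stated rate. A mirror computation, now upper-bounding the denominator via $\langle u, W_nu\rangle \leq M^2\langle u,C_n^{uu}u\rangle$ and $\norm{W_nu} \leq M^2\lambda_1^{1/2}\langle u, C_n^{uu}u\rangle^{1/2}$, produces $\lambda_r(C_{n+1}^{uu}) \geq \lambda_r(C_n^{uu})/\bigl(1 + ((1+\theta M^2)^2-1)/\kappa_n\bigr)$.

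Dividing the two per-step bounds yields the affine recursion $\kappa_{n+1} \leq (1+\theta\gamma^2)^{-2}\bigl(\kappa_n - 1 + (1+\theta M^2)^2\bigr)$. Because the contraction factor is $<1$, this increasing map has the unique fixed point $\kappa^\ast = \frac{(1+\theta M^2)^2-1}{(1+\theta\gamma^2)^2-1} = \frac{M^2(2+\theta M^2)}{\gamma^2(2+\theta\gamma^2)}$, and here the standing assumption $\theta \leq M^{-2}$ enters once more: from $\theta M^2\leq 1$ and $\theta\gamma^2\geq 0$ we obtain $\kappa^\ast \leq \tfrac{3M^2}{2\gamma^2}$. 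A one-line induction on the increasing affine map then gives $\kappa_n \leq \max\{\kappa_0,\kappa^\ast\} \leq \bar\kappa$. I expect the main obstacle to be the interlocking of the two estimates: the lower bound on $\lambda_r(C_{n+1}^{uu})$ degrades with $\kappa_n$, so the condition-number bound cannot be proved in isolation but must be carried as the induction hypothesis feeding its own recursion --- and recovering the sharp squared rate forces one to retain the quadratic term in $(I+h_nW_n)^{-2}$ rather than settle for the cruder Weyl estimate.
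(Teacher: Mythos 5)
Your proposal is correct, and its first half --- the Woodbury identity $A_n=(I+h_nC_n^{uu}H_n\matT H_n)^{-1}$, the resulting congruence recursion $C_{n+1}^{uu}=A_nC_n^{uu}A_n\matT$, rank and range invariance from invertibility of $A_n$, the symmetrization via $W_n=(C_n^{uu})^{1/2}H_n\matT H_n(C_n^{uu})^{1/2}$, and the top-eigenvalue contraction $\lambda_1(C_{n+1}^{uu})\leq(1+\theta\gamma^2)^{-2}\lambda_1(C_n^{uu})$ --- is essentially the paper's proof (the paper packages the symmetrization into \cref{lem:Ccmp} and keeps the squared form $(I+\gamma^2h_nC_n^{uu})^{-2}$ for exactly the reason you emphasize). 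Where you genuinely diverge is the condition-number bound. The paper deliberately \emph{loosens} the two-sided squared bounds to linear ones via $\frac{x}{1+3ax}\leq\frac{x}{(1+ax)^2}\leq\frac{x}{1+2ax}$ on $[0,a^{-1}]$, which decouples the eigenvalues into a uniform additive precision recursion $2h_n\gamma^2\leq\lambda_k^{-1}(C_{n+1}^{uu})-\lambda_k^{-1}(C_n^{uu})\leq 3h_nM^2$ for every $k$, and then a single mediant inequality $\frac{a+c}{b+d}\leq\max\{\frac ab,\frac cd\}$ gives $\kappa_{n+1}\leq\max\{\kappa_n,3M^2/(2\gamma^2)\}$ in one line; the $3/2$ is precisely the ratio of the loosening constants. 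You instead retain the squared form throughout, accept that your lower bound on $\lambda_r$ is entangled with $\kappa_n$, and close the loop with an affine fixed-point induction, obtaining the slightly sharper intermediate constant $\kappa^\ast=\frac{M^2(2+\theta M^2)}{\gamma^2(2+\theta\gamma^2)}\leq\frac{3M^2}{2\gamma^2}$. Your route buys a marginally tighter (and $\theta$-dependent) fixed point and makes the contraction structure of the $\kappa_n$-recursion explicit; the paper's route buys simplicity --- each eigenvalue is tracked independently, so no coupled induction hypothesis is needed --- at the cost of the throwaway factor hidden in the $3$ and $2$. Both land on the same stated bound $\bar\kappa=\max\{\kappa(C_0^{uu}),3M^2/(2\gamma^2)\}$, and your Rayleigh-quotient bookkeeping (in particular restricting the minimization to $\mcV_0$, which is legitimate because $I+h_nW_n$ preserves both $\mcV_0$ and its orthogonal complement) is sound.
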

\begin{proof}
Rewrite the deviation update \cref{eq:DEKIdevi} as
\begin{equation*}
\label{eq:pfDeviLinear}
\begin{split}
    \tau_{n+1}^{(j)} =~& \Rectbrac{ I - h_n C_{n}^{uu} H_n\matT  ( I + h_n H_n C_{n}^{uu} H_n\matT  )^{-1} H_n } \tau_{n}^{(j)} \\
    =~& ( I + h_n C_{n}^{uu} H_n\matT  H_n )^{-1} \tau_{n}^{(j)},
\end{split}
\end{equation*}
where we use the Sherman-Morrison-Woodbury formula (see \cref{lem:SMW}) by taking $A= I, U = h_n C_{n}^{uu} H_n\matT  $ and $V = H_n\matT  $.

Recall by definition $ C_n^{uu} = \frac{1}{J-1} \sum_{j=1}^J \tau_{n}^{(j)} (\tau_{n}^{(j)})\matT $. Therefore,
\[
    C_{n+1}^{uu} = ( I + h_n C_{n}^{uu} H_n\matT  H_n )^{-1} C_{n}^{uu} ( I + h_n H_n\matT  H_n C_{n}^{uu} )^{-1}.
\]
By assumption, $\gamma^2 I \preceq H_n\matT H_n \preceq  M^2 I.$ Apply \cref{lem:Ccmp} to $C_{n+1}^{uu}$, we obtain
\begin{equation}
\label{eq:pfCBound}
    C_n^{uu} ( I + M^{2} h_n C_n^{uu} )^{-2} \preceq C_{n+1}^{uu} \preceq C_n^{uu} ( I + \gamma^{2} h_n C_n^{uu} )^{-2}.
\end{equation}
Notice when $0\leq x\leq a^{-1}$, 
\[
    \frac{x}{1+3ax} \leq \frac{x}{(1+a x)^2} \leq \frac{x}{1+2ax}.
\]
Applying \cref{lem:PSDcmp} (3), when $h_n = \theta \normo{C_n^{uu}}^{-1}$ and $\theta \leq M^{-2} \leq \gamma^{-2}$, it holds
\begin{equation*}
    C_n^{uu} ( I + 3 M^{2} h_n C_n^{uu} )^{-1} \preceq C_{n+1}^{uu} \preceq C_n^{uu} ( I + 2 \gamma^{2} h_n C_n^{uu} )^{-1}.
\end{equation*}
Denote $\lambda_k(C_n^{uu})$ as the $k$-th largest eigenvalues of $C_n^{uu}$, then (see \cref{lem:PSDcmp} (2))
\begin{equation}
\label{eq:pfCEigenBound}
    \frac{\lambda_k(C_n^{uu})}{ 1+ 3 h_n M^2 \lambda_k(C_n^{uu}) } \leq \lambda_k(C_{n+1}^{uu}) \leq \frac{\lambda_k(C_n^{uu})}{1+2h_n\gamma^2 \lambda_k(C_n^{uu})}.
\end{equation}
Hence $ \lambda_k(C_n^{uu}) > 0 \ioi \lambda_k(C_{n+1}^{uu})>0$, which implies that the rank of $C_n^{uu}$ remains constant $r= \rk(C_0^{uu})$. From \cref{eq:pfCEigenBound}, we get for $1\leq k\leq r$,
\begin{equation*}
    2 h_n \gamma^2 \leq \lambda_k^{-1}(C_{n+1}^{uu}) -  \lambda_k^{-1}(C_n^{uu}) \leq 3 h_n M^2.
\end{equation*}
Denote the modified condition number $\kappa_n = \lambda_1(C_n^{uu})/\lambda_r(C_n^{uu})$. Then
\begin{equation*}
    \begin{split}
        \kappa_{n+1} =~& \frac{\lambda_{r}^{-1}(C_{n+1}^{uu})}{\lambda_{1}^{-1}(C_{n+1}^{uu})} \leq \frac{\lambda_{r}^{-1}(C_{n}^{uu}) + 3 h_n M^2 }{\lambda_{1}^{-1}(C_{n}^{uu}) + 2 h_n \gamma^2 } \\
        \leq~& \max\left\{ \frac{\lambda_{r}^{-1}(C_{n}^{uu})}{\lambda_{1}^{-1}(C_{n}^{uu})}, \frac{3 M^2 }{2 \gamma^2 } \right\} = \max \{ \kappa_n, 3 M^2/ 2\gamma^2 \}.
    \end{split}
\end{equation*}
By induction, we deduce that
\begin{equation*}
    \kappa_n \leq \bar{\kappa} := \max\{ \kappa_0, 3 M^2/2\gamma^2 \}.
\end{equation*}
Finally, applying \cref{lem:PSDcmp} (1) to \cref{eq:pfCBound}, we obtain 
\begin{equation*}
\begin{split}
    \normo{C_{n+1}^{uu}} \leq \lambda_{\max} \Big( &C_n^{uu} ( I + \gamma^2 h_n C_n^{uu} )^{-2} \Big) =  \frac{\normo{C_{n}^{uu}}}{(1 + \gamma^2 h_n \normo{C_{n}^{uu}})^2}. \\
    \St~& \normo{C_{n}^{uu}} \leq \normo{C_0^{uu}} (1 + \gamma^2 \theta)^{-2n}.
\end{split}
\end{equation*}
Note the equality above holds since $\psi(x) = x(1+\gamma^2 h_n x)^{-2}$ is monotone increasing on $x\in[0,\normo{C_n^{uu}}]$, due to the fact that when $h_n = \theta \normo{C_n^{uu}}^{-1}$ and $\theta \leq M^{-2} \leq \gamma^{-2}$,
\[
    \gamma^2 h_n x \leq \gamma^2 h_n \normo{C_n^{uu}} \leq 1 \St \psi'(x) = \frac{1-\gamma^2 h_n x}{(1+\gamma^2 h_n x)^{2}} \geq 0.
\]
\end{proof}

The convergence analysis also requires the control of the diagonal elements of the covariance matrix, whose proof is mainly based on the invariance of the column space of the covariance matrix. We prove
\begin{proposition}
\label{prop:ensemble_collapse_diag}
Under the settings of \cref{prop:ensemble_collapse}. The lower bound for the diagonal elements of $C_n^{uu}$ holds:
\begin{equation}
    \begin{split}
        \min_s C_n^{uu} &(s,s) \geq \bar{\kappa}^{-1} \norm{C_n^{uu}} \min_s P(s,s),
    \end{split}
\end{equation}
where $\bar{\kappa}$ is the upper bound in \cref{eq:KappaBound}, and $P\in\mR^{d_u\times d_u}$ is the the orthogonal projector onto the column space $ \mcV_n = \text{Im}(C_n^{uu}) $. The projector is invariant under evolution since  $\mcV_n $ is invariant: $\mcV_n\equiv \mcV = \text{Im}(C_0^{uu})$.
\end{proposition}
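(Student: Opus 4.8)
The plan is to combine two ingredients: a structural \emph{invariance of the column space} $\mcV_n = \text{Im}(C_n^{uu})$, and a positive-definiteness comparison of $C_n^{uu}$ against the orthogonal projector $P$ onto $\mcV_n$. The invariance is the heart of the argument; once it is available, the diagonal bound follows from a short spectral estimate together with the condition-number bound of \cref{prop:ensemble_collapse}.

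First I would establish the column-space invariance. Using the simplified form of the deviation update derived inside the proof of \cref{prop:ensemble_collapse}, each deviation evolves linearly,
\[
\tau_{n+1}^{(j)} = M_n \tau_n^{(j)}, \qquad M_n := ( I + h_n C_n^{uu} H_n\matT H_n )^{-1}.
\]
Since $C_n^{uu} = \tfrac{1}{J-1}\sum_j \tau_n^{(j)}(\tau_n^{(j)})\matT$, we have $\mcV_n = \text{Im}(C_n^{uu}) = \text{span}\{\tau_n^{(j)}\}$, hence $\mcV_{n+1} = \text{span}\{\tau_{n+1}^{(j)}\} = M_n \mcV_n$. The key observation is that $C_n^{uu} H_n\matT H_n$ maps $\mcV_n$ into $\mcV_n$, because every output lies in $\text{Im}(C_n^{uu}) = \mcV_n$; consequently $I + h_n C_n^{uu} H_n\matT H_n$ preserves $\mcV_n$. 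This matrix is invertible on all of $\mR^{d_u}$ (its spectrum lies in $[1,\infty)$, since $C_n^{uu} H_n\matT H_n$ has nonnegative real eigenvalues, sharing its nonzero ones with the PSD matrix $(C_n^{uu})^{1/2} H_n\matT H_n (C_n^{uu})^{1/2}$), and because $\mcV_n$ is finite-dimensional its restriction to $\mcV_n$ is a bijection of $\mcV_n$, so $M_n$ maps $\mcV_n$ onto $\mcV_n$ as well. Therefore $\mcV_{n+1} = \mcV_n$, and by induction $\mcV_n \equiv \mcV = \text{Im}(C_0^{uu})$, which in particular shows $P$ is invariant.

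With the invariance in hand I would turn to the diagonal bound. Decomposing the PSD matrix $C_n^{uu}$ on $\mcV$, its smallest nonzero eigenvalue is $\lambda_r(C_n^{uu})$, and comparing against the projector gives $C_n^{uu} \succeq \lambda_r(C_n^{uu}) P$. Testing this inequality against the standard basis vector $e_s$ yields
\[
C_n^{uu}(s,s) = e_s\matT C_n^{uu} e_s \geq \lambda_r(C_n^{uu})\, e_s\matT P e_s = \lambda_r(C_n^{uu})\, P(s,s).
\]
Finally, \cref{prop:ensemble_collapse} provides $\lambda_1(C_n^{uu})/\lambda_r(C_n^{uu}) \leq \bar{\kappa}$, so $\lambda_r(C_n^{uu}) \geq \bar{\kappa}^{-1}\lambda_1(C_n^{uu}) = \bar{\kappa}^{-1}\norm{C_n^{uu}}$; substituting and minimizing over $s$ delivers the claim.

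I expect the main obstacle to be the column-space invariance, and within it the subtle point that invertibility of $M_n$ on $\mR^{d_u}$ is not by itself sufficient: one must verify that $M_n$ maps the subspace $\mcV_n$ \emph{onto} (not merely into) itself. This is settled by the finite-dimensional bijection argument above, which relies essentially on the identity $\text{Im}(C_n^{uu}) = \mcV_n$. The remaining steps — the PSD comparison $C_n^{uu} \succeq \lambda_r(C_n^{uu}) P$ and the invocation of the condition-number bound — are routine.
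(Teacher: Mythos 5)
Your proof is correct and follows essentially the same two-step structure as the paper's: first establish the invariance of the column space $\mcV_n$, then obtain the diagonal bound from the PSD comparison $C_n^{uu}\succeq \lambda_r(C_n^{uu})P$ together with the condition-number bound $\lambda_r(C_n^{uu})\geq \bar{\kappa}^{-1}\normo{C_n^{uu}}$ from \cref{prop:ensemble_collapse}. The only point of divergence is the invariance argument: the paper proves the inclusion $\text{Im}(C_{n+1}^{uu})\subseteq\text{Im}(C_n^{uu})$ (each new deviation is a linear combination of the old ones) and upgrades it to equality by invoking the rank preservation $\rk(C_n^{uu})\equiv\rk(C_0^{uu})$ already established in \cref{prop:ensemble_collapse}, whereas you argue directly that $I+h_nC_n^{uu}H_n\matT H_n$ restricts to a bijection of $\mcV_n$, so its inverse maps $\mcV_n$ onto itself. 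Both arguments are valid; yours is marginally more self-contained in that it does not need the rank-invariance conclusion as a separate input, and you correctly flag and resolve the one genuinely subtle point (that invertibility on $\mR^{d_u}$ alone does not give surjectivity onto the subspace).
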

\begin{rem}
To obtain a meaningful lower bound, we should assume
\begin{equation}
\label{eq:PBound}
    \min_s P(s,s)>0.
\end{equation}
It is equivalent to $\min_s C_0^{uu}(s,s)> 0$, which is necessary since the dropout procedure cannot explore directions with zero covariance. This condition can be achieved easily by using Gaussian initialization.
\end{rem}
\begin{proof}
First we prove that the column space of $C_n^{uu}$ is invariant. Note $\text{Im}(C_n^{uu}) = \text{span}\{ \tau_n^{(j)}:j=1,\dots J\} $. Observe that for any $v\in\mR^{d_u}$ that satisfies $\forall 1\leq j\leq J, v\matT  \tau_n^{(j)} = 0$, it holds
\[
    v\matT  \tau_{n+1}^{(k)} = v\matT  \Brac{ \tau_{n}^{(k)} + \frac{h_n}{J-1} \sum_{j=1}^J \tau_n^{(j)} ( H_n(u_{n}^{(j)}) - \mean{H_n(u_{n})} )\matT  w_n^{(j)} } = 0,
\]
where we denote $w_n^{(j)} = ( I + h_n H_n C_{n}^{uu} H_n\matT  )^{-1} ( \mean{H_n(u_{n})} - H_n(u_{n}^{(j)})) $. Thus by induction, we obtain $ \forall n, 1\leq j\leq J, v\matT  \tau_n^{(j)} = 0 $. This implies that
\[
    \text{Im}(C_n^{uu}) \subset \text{Im}(C_0^{uu}).
\]
While by \cref{prop:ensemble_collapse}, $\rk(C_n^{uu}) = \rk(C_0^{uu})$, we conclude that $\text{Im}(C_n^{uu}) = \text{Im}(C_0^{uu})$.

To control the diagonal element of $C_n^{uu}$, consider the singular value decomposition $ C_n^{uu} = Q_n \Lambda_n Q_n\matT  $, then
\[
    C_n^{uu}(s,s) = \sum_{t=1}^r \lambda_t(C_n^{uu}) (Q_n(s,t))^2.
\]
Notice $P = Q_nQ_n\matT $ is the orthogonal projector onto the invariant colunm space $\mcV = \text{Im} (C_0^{uu})$, and therefore is invariant. Then we can control the lower bound of the diagonal element as follows:
\[
    \min_s C_n^{uu}(s,s) \geq \lambda_r(C_n^{uu}) \min_s \sum_{t=1}^r (Q_n(s,t))^2 \geq \bar{\kappa}^{-1} \normo{C_n^{uu}} \min_s P(s,s).
\]
\end{proof}

\subsection{Convergence to optimal solution}
%The convergence is based on the basic observation that dropout has a similar effect as the localization. 
The convergence of DEKI is mainly guaranteed by the positive definiteness of average dropout ensemble covariance, this is similar to the localization effect used in LEKI. In particular, we claim that
\begin{equation}
\label{eq:ExpC}
\bar{C}_n^{uu} = \mE_n \tilde{C}_n^{uu} = \lambda(1-\lambda) \diag(C_n^{uu}) + \lambda^2 C_n^{uu}, 
\end{equation}
where $\diag(C_n^{uu})$ is the diagonal part of $C_n^{uu}$, i.e. $ \diag(C_n^{uu}) (s,t) = \delta_{st} C_n^{uu} (s,t)$, and here $\delta_{st}$ is the standard kronecker delta. If we denote $\Psi = \lambda^2 I + \lambda(1-\lambda) E$, where $E(s,t)\equiv 1$, then $\bar{C}_n^{uu} = \Psi \circ C_n^{uu}$.

To prove \cref{eq:ExpC}, just notice when $s\neq t$,
\begin{equation*}
\begin{split}
    \bar{C}_n^{uu}(s,t) =~& \mE_n \Brac{ \frac{1}{J-1} \sum_{j=1}^J \rho_n(s) \rho_n(t) \tau_n^{(j)}(s) \tau_n^{(j)}(t) } \\
    =~& \frac{1}{J-1} \sum_{j=1}^J \lambda^2 \tau_n^{(j)}(s) \tau_n^{(j)}(t) = \lambda^2 C_n^{uu}(s,t).
\end{split}
\end{equation*}
And when $s=t$,
\begin{equation*}
\begin{split}
    \bar{C}_n^{uu}(s,s) =~& \mE_n \Brac{ \frac{1}{J-1} \sum_{j=1}^J \rho_n^2(s) (\tau_n^{(j)}(s))^2 } \\
    =~& \frac{1}{J-1} \sum_{j=1}^J \lambda (\tau_n^{(j)}(s))^2 = \lambda C_n^{uu}(s,s).
\end{split}
\end{equation*}
The convergence analysis is based on the connection of EKI with Gauss-Newton. To be specific, notice the mean evolution \cref{eq:DEKImean} in DEKI can be approximated by the following Gauss-Newton type iteration
\begin{equation}
\label{eq:pfGN}
    \mean{u}_{n+1}' = \mean{u}_n + \tilde{h}_n \tilde{C}_{n}^{uu} G_n\matT  ( I + \tilde{h}_n G_n \tilde{C}_{n}^{uu} G_n\matT  )^{-1} ( z - \mcH(\mean{u}_{n}) ),
\end{equation}
where $G_n = \nabla \mcH(\mean{u}_n)$ is the Jacobian at $\mean{u}_n$. Note it is the Gauss-Newton update for the loss function
\[
    l_{n}(u) = \normo{u-\mean{u}_n}_{\tilde{C}_n^{uu}}^2 + \tilde{h}_n \normo{\mcH(u)-z}^2.
\]
We can bound the difference of the two updates by the following lemma. It takes a similar form as Proposition 3.3 in \cite{MR4405495}, but the upper bound depends on $d_u$ there. Here we improve the bounds so that there is no dimension dependence.
\begin{proposition}
\label{prop:LinearErr}
Under the settings of \cref{thm:convergence}, it holds
\begin{equation}
\label{eq:LinearErr}
    \normo{\mean{u}_{n+1} - \mean{u}_{n+1}' } \leq C \tilde{h}_n \normo{\tilde{C}_n^{uu}}^{3/2} \normo{ z - \mcH(\mean{u}_{n}) },
\end{equation}
where $C = (J-1)^{3/2} H$ is a constant independent of $d_u$ and $H$ is the bound of the Hessian of $\mcH$ defined in \cref{eq:HessBound}.
\end{proposition}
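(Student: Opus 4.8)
The plan is to reduce \cref{eq:LinearErr} to a bound on the difference of the two gain operators. Writing $r_n = z - \mcH(\mean{u}_n)$, the DEKI mean update \cref{eq:DEKImean} and the Gauss--Newton step \cref{eq:pfGN} both take the form $\mean{u}_n + \tilde{h}_n K r_n$, with $K = \tilde{C}_n^{uz}(I + \tilde{h}_n\tilde{C}_n^{zz})^{-1}$ and $K' = \tilde{C}_n^{uu}G_n\matT(I + \tilde{h}_n G_n\tilde{C}_n^{uu}G_n\matT)^{-1}$ respectively. Hence $\mean{u}_{n+1} - \mean{u}_{n+1}' = \tilde{h}_n(K - K')r_n$, and it suffices to prove $\normo{K - K'}\le (J-1)^{3/2}H\normo{\tilde{C}_n^{uu}}^{3/2}$.

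First I would Taylor-expand $\mcH$ about $\mean{u}_n$. Since $\tilde{u}_n^{(j)} = \mean{u}_n + \tilde{\tau}_n^{(j)}$ and $G_n = \nabla\mcH(\mean{u}_n)$, the $C^2$ assumption together with the Hessian bound \cref{eq:HessBound} gives $\mcH(\tilde{u}_n^{(j)}) = \mcH(\mean{u}_n) + G_n\tilde{\tau}_n^{(j)} + R_n^{(j)}$ with $\normo{R_n^{(j)}}\le \tfrac12 H\normo{\tilde{\tau}_n^{(j)}}^2$. The key simplification is that the dropout mask is shared across members, so $\sum_j\tilde{\tau}_n^{(j)} = \rho_n\circ\sum_j\tau_n^{(j)} = 0$; the constant term then drops out of the cross-covariance and yields the clean identities $\tilde{C}_n^{uz} = \tilde{C}_n^{uu}G_n\matT + \tfrac{1}{J-1}\sum_j\tilde{\tau}_n^{(j)}\otimes R_n^{(j)}$ and $\tilde{C}_n^{zz} = G_n\tilde{C}_n^{uu}G_n\matT + E_n^{zz}$, where $E_n^{zz}$ gathers the cross terms $G_n\tilde{\tau}_n^{(j)}\otimes\delta_n^{(j)}$ and the quadratic terms $\delta_n^{(j)}\otimes\delta_n^{(j)}$ with $\delta_n^{(j)} = R_n^{(j)} - \tfrac1J\sum_k R_n^{(k)}$. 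The device that removes all dimension dependence is the ensemble inequality $\tilde{\tau}_n^{(j)}\otimes\tilde{\tau}_n^{(j)}\preceq (J-1)\tilde{C}_n^{uu}$, i.e. $\normo{\tilde{\tau}_n^{(j)}}^2\le (J-1)\normo{\tilde{C}_n^{uu}}$, which turns every remainder size into a power of $\normo{\tilde{C}_n^{uu}}$; in particular $\normo{R_n^{(j)}}\le \tfrac12 H(J-1)\normo{\tilde{C}_n^{uu}}$.

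Next I would split $K - K'$ by the resolvent identity into an ``$A$-error'' piece $(\tilde{C}_n^{uz} - \tilde{C}_n^{uu}G_n\matT)(I + \tilde{h}_n\tilde{C}_n^{zz})^{-1}$ and a ``$B$-error'' piece $\tilde{h}_n\tilde{C}_n^{uu}G_n\matT(I + \tilde{h}_n G_n\tilde{C}_n^{uu}G_n\matT)^{-1}(G_n\tilde{C}_n^{uu}G_n\matT - \tilde{C}_n^{zz})(I + \tilde{h}_n\tilde{C}_n^{zz})^{-1}$. Both data-space covariances are PSD, so both resolvents have norm at most one, and the $A$-error piece is controlled directly by the identities above: $\normo{\tilde{C}_n^{uz} - \tilde{C}_n^{uu}G_n\matT}\le \tfrac{1}{J-1}\sum_j\normo{\tilde{\tau}_n^{(j)}}\,\normo{R_n^{(j)}}\le \tfrac12 J(J-1)^{1/2}H\normo{\tilde{C}_n^{uu}}^{3/2}$, which already sits below the target constant $(J-1)^{3/2}H$ for $J\ge2$ and supplies the leading contribution.

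The main obstacle is the $B$-error piece, since both the prefactor $\tilde{C}_n^{uu}G_n\matT$ and the cross term $G_n\tilde{\tau}_n^{(j)}\otimes\delta_n^{(j)}$ threaten to introduce the Jacobian norm $\normo{G_n}$, which the stated constant forbids. I would absorb every Jacobian into a resolvent using the scalar bound $\sigma/(1+c\sigma^2)\le 1/(2\sqrt{c})$: with $W = (\tilde{C}_n^{uu})^{1/2}G_n\matT$ and the Sherman--Morrison--Woodbury identity (\cref{lem:SMW}) one has $\tilde{C}_n^{uu}G_n\matT(I + \tilde{h}_n G_n\tilde{C}_n^{uu}G_n\matT)^{-1} = (\tilde{C}_n^{uu})^{1/2}W(I + \tilde{h}_n W\matT W)^{-1}$, so its norm is at most $\normo{\tilde{C}_n^{uu}}^{1/2}/(2\sqrt{\tilde{h}_n})$ with no $\normo{G_n}$; an analogous resolvent estimate, obtained after comparing the $\tilde{C}_n^{zz}$-resolvent with the one built from the linearized deviations $G_n\tilde{\tau}_n^{(j)}$, bounds $\normo{(G_n\tilde{C}_n^{uu}G_n\matT - \tilde{C}_n^{zz})(I + \tilde{h}_n\tilde{C}_n^{zz})^{-1}}$ by a $\normo{G_n}$-free multiple of $\normo{\tilde{C}_n^{uu}}/\sqrt{\tilde{h}_n}$. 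The crucial point is that these two resolvent bounds each carry a factor $\tilde{h}_n^{-1/2}$ that cancels the explicit $\tilde{h}_n$ in the $B$-error piece, so its cross term lands at the same order $\normo{\tilde{C}_n^{uu}}^{3/2}$ as the $A$-error, while the quadratic remainder term is one half-power higher and subdominant. Collecting the $A$- and $B$-errors gives \cref{eq:LinearErr}; the delicate bookkeeping that keeps the overall constant exactly $(J-1)^{3/2}H$ — using $\normo{\tilde{C}_n^{uu}}\le\normo{\tilde{C}_0^{uu}}$ from \cref{prop:ensemble_collapse} to absorb the subdominant term — is where most of the care will go.
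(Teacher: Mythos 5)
Your reduction to a bound on the gain difference, the Taylor expansion with the Hessian bound \cref{eq:HessBound}, and the rank/trace inequality $\sum_j\normo{\tilde{\tau}_n^{(j)}}^2=(J-1)\tr(\tilde{C}_n^{uu})\leq (J-1)^2\normo{\tilde{C}_n^{uu}}$ are exactly the ingredients the paper uses, and your ``$A$-error'' estimate is essentially the paper's bound on $\normo{Y_n-\hat{Y}_n}$. Where you diverge is the splitting: the paper does not apply the resolvent identity to $K-K'$. It factors $T_n$ (with $\tilde{C}_n^{uu}=T_nT_n\matT$) out of \emph{both} gains, writes $K=T_nS(0)$ and $K'=T_nS(1)$ with $S(t)=Z_t\matT(I+\tilde{h}_nZ_tZ_t\matT)^{-1}$ along the interpolation $Z_t=(1-t)Y_n+t\hat{Y}_n$, and bounds $\normo{S(1)-S(0)}\leq\int_0^1\normo{S'(t)}\,\mathrm{d}t\leq 2\normo{Y_n-\hat{Y}_n}$. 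Because every resolvent appearing in $S'(t)$ is built from the same $Z_t$, the estimate $\normo{Z_t\matT(I+\tilde{h}_nZ_tZ_t\matT)^{-1}}\leq\tilde{h}_n^{-1/2}$ applies verbatim and the constant comes out cleanly as $(J-1)^{3/2}H$.

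This is precisely where your $B$-error piece has a gap. Writing $G_n\tilde{C}_n^{uu}G_n\matT-\tilde{C}_n^{zz}=(\hat{Y}_n-Y_n)\hat{Y}_n\matT+Y_n(\hat{Y}_n-Y_n)\matT$, the factors $\hat{Y}_n\matT$ and $Y_n$ sit next to the \emph{wrong} resolvent (the one built from $Y_nY_n\matT$, respectively the left resolvent from $K'$), so the $\tilde{h}_n^{-1/2}$ cancellation you invoke is not automatic; patching it via $\hat{Y}_n=Y_n+(\hat{Y}_n-Y_n)$ produces terms quadratic in $\normo{Y_n-\hat{Y}_n}$, of order $\tilde{h}_n^{1/2}\normo{\tilde{C}_n^{uu}}^2$, which are only controllable after invoking $\tilde{h}_n\normo{\tilde{C}_n^{uu}}\leq\mu$ and then contribute a constant depending on $\mu$, $H$, $J$ and $\normo{C_0^{uu}}$. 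Moreover, your $A$-error alone already costs $\tfrac{J}{2}(J-1)^{1/2}H$, so once the leading part of the $B$-error (which lives at the same order $\normo{\tilde{C}_n^{uu}}^{3/2}$) is added, the total constant exceeds $(J-1)^{3/2}H$. Your route can be completed, but it yields \cref{eq:LinearErr} with a larger and messier constant --- harmless for \cref{lem:onestep} and \cref{thm:convergence}, where only the form of the bound matters, but not the proposition as stated. The interpolation device $Z_t$ is the missing idea that collapses your two-piece splitting into a single matched-resolvent estimate.
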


The proof is postponed to \cref{app:linerr}. 
Notice though the ensemble evolves stochastically, \cref{prop:ensemble_collapse}, \cref{prop:ensemble_collapse_diag} and \cref{prop:LinearErr} hold deterministically once the intiailization is fixed. This mainly comes from
the fact that we do not use the dropout covariance matrices in the evolution of the
ensemble deviation.
The proposition shows that DEKI can be approximated well by the Gauss-Newton update as the ensemble collapses. It is expected to obtain exponential convergence for such scheme under the PL condition \cref{eq:PL}. To be specific, we prove the main theorem as follows.
\begin{theorem}
\label{thm:convergence}
Consider the DEKI scheme for the regularized problem \cref{eq:RegProblem} in the high dimensional regime $d_u\gg J$. Assume \cref{asm:main} holds, and use standard Gaussian initialization so that \cref{eq:PBound} holds. Choose adaptive step sizes 
\begin{equation}    \label{eq:Ada_step}
    h_n = \theta \normo{C_n^{uu}}^{-1},\quad \tilde{h}_n = \mu \normo{C_n^{uu}}^{-1},
\end{equation}
where $\theta \leq M^{-2}, \mu \leq L^{-1}$, and $L, M$ are defined in \cref{asm:main}. Then there exists $n_0 \in \mN$ s.t. when $n>n_0$, the $l^2$-loss \cref{eq:TikReg} converges exponentially,
\begin{equation}
    \mE_{n_0} l(\mean{u}_{n}) - l_{\min} \leq (1-\beta)^{n-n_0} (l(\mean{u}_{n_0}) - l_{\min} + C_2),
\end{equation}
where $\beta \in (0,1)$ and $C_2$ are some constants that can be determined explicitly.
\end{theorem}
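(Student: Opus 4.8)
The plan is to show that the DEKI mean update behaves, in conditional expectation, like a contracting preconditioned gradient step, and then to absorb the linearization and dropout errors into a geometrically decaying remainder. Throughout I would write $a_n = l(\mean{u}_n) - l_{\min}$ and use $r_n = z - \mcH(\mean{u}_n)$, so that $\nabla l(\mean{u}_n) = -G_n\matT r_n$ and $\normo{r_n}^2 = 2l(\mean{u}_n)$, and I would recall from \cref{prop:LinearErr} that the true update $\mean{u}_{n+1}$ differs from the Gauss--Newton surrogate $\mean{u}_{n+1}'$ of \cref{eq:pfGN} by at most $C\tilde h_n\normo{\tilde C_n^{uu}}^{3/2}\normo{r_n}$.

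First I would establish a one-step descent for the surrogate. Writing it as a preconditioned gradient step $\mean{u}_{n+1}' = \mean{u}_n + \tilde h_n \tilde C_n^{uu} G_n\matT(I + \tilde h_n B_n)^{-1} r_n$ with $B_n = G_n\tilde C_n^{uu}G_n\matT$, the $L$-smoothness descent lemma gives $l(\mean{u}_{n+1}') \le l(\mean{u}_n) - T_n + \tfrac L2\normo{\mean{u}_{n+1}' - \mean{u}_n}^2$, where $T_n = \tilde h_n r_n\matT B_n(I+\tilde h_n B_n)^{-1} r_n \ge 0$ is the first-order gain. Since $\tilde\tau_n^{(j)} = \rho_n\circ\tau_n^{(j)}$ forces $\normo{\tilde C_n^{uu}} \le \normo{C_n^{uu}}$ pathwise, the adaptive choice $\tilde h_n = \mu\normo{C_n^{uu}}^{-1}$ yields $\tilde h_n\normo{\tilde C_n^{uu}} \le \mu \le L^{-1}$, so the quadratic term is at most $\tfrac12 T_n$ and $l(\mean{u}_{n+1}') \le l(\mean{u}_n) - \tfrac12 T_n$. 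The crucial point is that every inequality so far is pathwise in the dropout realisation, so the expectation can be deferred: bounding $(I + \tilde h_n B_n)^{-1} \succeq \underline c\, I$ uniformly (using that $\normo{G_n}^2$ stays bounded along the trajectory via $\normo{\nabla^2 l}\le L$ and the bounded Hessian \cref{eq:HessBound}) gives $T_n \ge \tfrac{\underline c\,\tilde h_n}{2}(G_n\matT r_n)\matT \tilde C_n^{uu}(G_n\matT r_n)$, and only here would I apply $\mE_n$. By \cref{eq:ExpC} and \cref{prop:ensemble_collapse_diag}, $\mE_n\tilde C_n^{uu} = \bar C_n^{uu} \succeq \lambda(1-\lambda)\diag(C_n^{uu}) \succeq \lambda(1-\lambda)\bar{\kappa}^{-1}\normo{C_n^{uu}}\big(\min_s P(s,s)\big) I$, which is strictly positive definite --- this is precisely the role of dropout. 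Combined with $\tilde h_n\normo{C_n^{uu}} = \mu$ and the PL condition \cref{eq:PL}, this bounds $\mE_n T_n$ below by a positive multiple of $a_n$, hence $\mE_n[l(\mean{u}_{n+1}') - l_{\min}] \le (1-\beta_0)a_n$ for an explicit $\beta_0 \in (0,1)$.

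Next I would transfer the contraction from the surrogate to the true iterate. Applying $L$-smoothness between $\mean{u}_{n+1}$ and $\mean{u}_{n+1}'$, together with the elementary bound $\normo{\nabla l(\mean{u}_{n+1}')}^2 \le 2L(l(\mean{u}_{n+1}') - l_{\min})$ and a Young split, gives $l(\mean{u}_{n+1}) - l_{\min} \le (1+\tfrac\epsilon2)(l(\mean{u}_{n+1}') - l_{\min}) + (\tfrac L\epsilon + \tfrac L2)e_n^2$, where $e_n = \normo{\mean{u}_{n+1} - \mean{u}_{n+1}'}$. Using \cref{prop:LinearErr}, $\normo{\tilde C_n^{uu}} \le \normo{C_n^{uu}}$, and the exponential collapse $\normo{C_n^{uu}} \le \normo{C_0^{uu}}(1+\gamma^2\theta)^{-2n}$ from \cref{prop:ensemble_collapse}, the error obeys $e_n^2 \le C'(1+\gamma^2\theta)^{-2n}(a_n + l_{\min})$ pathwise. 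Taking $\mE_n$ and choosing $\epsilon$ so that $(1+\tfrac\epsilon2)(1-\beta_0) \le 1 - \beta$ for a fixed $\beta \in \big(0,\,1 - (1+\gamma^2\theta)^{-2}\big)$ leaves the stochastic recursion $\mE_n[a_{n+1}] \le (1-\beta)a_n + b_n$ valid for all $n > n_0$, where $n_0$ is chosen so the decaying prefactor $(1+\gamma^2\theta)^{-2n}$ is absorbed into the contraction, with deterministic remainder $b_n = O\big((1+\gamma^2\theta)^{-2n}\big)$.

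Finally, iterating this recursion with the tower property from $n_0$ yields $\mE_{n_0}[a_n] \le (1-\beta)^{n-n_0}a_{n_0} + \sum_{k=n_0}^{n-1}(1-\beta)^{n-1-k}b_k$, and the choice $\beta < 1 - (1+\gamma^2\theta)^{-2}$ makes the geometric sum bounded by $(1-\beta)^{n-n_0}C_2$ for an explicit $C_2$, which is exactly the claimed estimate. I expect the main obstacle to be the nonlinear dependence of the update on the random dropout covariance $\tilde C_n^{uu}$: because $\tilde C_n^{uu}$ sits inside the matrix inverse and inside $B_n$, one cannot simply substitute its mean $\bar C_n^{uu}$, so the whole argument hinges on arranging every estimate to be pathwise until the single quantity $(G_n\matT r_n)\matT \tilde C_n^{uu}(G_n\matT r_n)$, linear in $\tilde C_n^{uu}$, remains --- where \cref{eq:ExpC} and the diagonal lower bound of \cref{prop:ensemble_collapse_diag} finally inject positive definiteness. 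A secondary difficulty is keeping $\normo{G_n}$ (the true Jacobian) controlled so that the descent constant $\underline c$ is uniform; this is what confines the clean statement to the final stage and forces the threshold $n_0$.
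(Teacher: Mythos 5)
Your proposal is correct and follows essentially the same route as the paper's proof: a Gauss--Newton surrogate plus the $L$-smoothness descent lemma, with every estimate kept pathwise until only the quantity linear in $\tilde{C}_n^{uu}$ remains, at which point \cref{eq:ExpC}, \cref{prop:ensemble_collapse_diag} and the PL condition produce the contraction $\beta_0$, while the linearization error from \cref{prop:LinearErr} is absorbed through the exponential ensemble collapse and a geometric iteration from a warm-up index $n_0$. The only difference is cosmetic: you expand $l$ around $\mean{u}_{n+1}'$ and use a Young split together with $\normo{\nabla l}^2\le 2L(l-l_{\min})$, whereas the paper's \cref{lem:onestep} expands around $\mean{u}_n$ and bounds the resulting cross term directly, keeping the sharper combination $\psi_1-\tfrac{1}{2}\psi_2$; both yield the same recursion up to constants.
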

\begin{rems}
(1) The convergence rate $\beta$ admits explicitly expression \cref{eq:beta} where $\beta_0$ in defined in \cref{eq:beta0}. Under mild scaling assumptions, one can show that $\beta = \mcO(J/d_u)$ (see \cref{lem:scaling}). Therefore, to reach $\varepsilon$-accuracy, the needed steps are
\begin{equation}
    N_\varepsilon = \mcO \Brac{ d_u \log \varepsilon^{-1} / J }.
\end{equation}
Such linear scaling w.r.t. the dimension $d_u$ is optimal. For more details, please see \cref{prop:complexity}.

(2) Here $n_0$ is the warm-up steps and can be taken as some fixed number depending only on the initialization, or in other words, $n_0$ is a $\mcF_0$-measurable random variable. One can show that $n_0 = \mcO(\log d_u)$ using the scaling arguments in \cref{lem:scaling}. Note $\mcO(\log d_u)$ is usually viewed as $\mcO(1)$ quantity, and the term $l(\mean{u}_{n_0})$ is harmless to our result. From \cref{lem:meanstab}, we can see that the $l^2$-loss can only have constant growth: $l(\mean{u}_{n_0}) \leq C' l(\mean{u}_{0})$.

(3) Computational costs comparison. For vanilla EKI, the ensemble size should be at least $J=\mcO(d_u)$, and it takes $\mcO(\log \varepsilon^{-1})$ steps to reach $\varepsilon$-accuracy. In each step, it takes $J$ evaluations of the forward map, and $\mcO(J^2 (d_u+d_y))$ additional costs due to the matrix-vector multiplications. Thus the overall costs would be $\mcO(d_u\log\varepsilon^{-1})$ forward evaluations and $\mcO(d_u^2 (d_u+d_y) \log \varepsilon^{-1}))$ additional costs. For DEKI, it takes $\mcO(J^{-1}d_u\log \varepsilon^{-1})$ steps when $\varepsilon$ is small, and thus the overall costs will be $\mcO(d_u\log \varepsilon^{-1})$ forward evaluations, and $\mcO(J d_u (d_u+d_y) \log \varepsilon^{-1} )$ additional costs. Since $J\ll d_u$, we can see that DEKI reduces the additional cost while using the same order of forward evaluations. 

(4) We can also obtain almost sure convergence with the same linear rate $\beta$ using submartingale convergence theorem, see \cite{weissmann2024almost}. Denote $X_n = (1+\beta)^n \Brac{ l(\mean{u}_n) - l_{\min}}$ and $\varepsilon_n = C_1 (1+\beta)^{n+1} \delta^n l_{\min}$, the one-step analysis \cref{eq:pf_onestep} shows that when $n\geq n_0$, 
\[
    \mE_n X_{n+1} \leq (1-\beta^2) X_n + \varepsilon_n. 
\]
The supermartingale convergence theorem implies $X_n \gto 0$ a.s., which implies the almost sure linear convergence.
\end{rems}

The proof of \cref{thm:convergence} is based on the following one-step analysis. 
\begin{lemma}
\label{lem:onestep}
Under the settings of \cref{thm:convergence}, it holds
\begin{equation}    \label{eq:onestep}
    \mE_n l(\mean{u}_{n+1}) - l_{\min} \leq (1- 2\beta_0 + \Delta_n) (l(\mean{u}_n) - l_{\min}) + \Delta_n l_{\min},
\end{equation}
where $\beta_0,\Delta_n$ are determined by
\begin{equation}    \label{eq:beta0}
    \beta_0 := c \lambda(1-\lambda) \bar{\kappa}^{-1} \min_s P(s,s) \cdot \frac{\mu(1+ 2\mu M^2)}{4(1+\mu M^2)^2}.
\end{equation}
\begin{equation}    \label{eq:Delta_n}
    \Delta_n := 2C \mu ( M + L \mu^{1/2} ) \normo{C_n^{uu}}^{1/2} + L C^2 \mu^2 \normo{C_n^{uu}}.
\end{equation}
Here $c,M,L$ are defined in \cref{asm:main}, $C$ is the contant in \cref{prop:LinearErr}, $\bar{\kappa}$ is the upper bound in \cref{eq:KappaBound}, and $P\in\mR^{d_u\times d_u}$ is the orthogonal projector onto $\mcV = $ Im$(C_0^{uu})$ in \cref{prop:ensemble_collapse_diag}. 
\end{lemma}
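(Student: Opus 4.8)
The plan is to compare the stochastic mean update \cref{eq:DEKImean} against its Gauss--Newton surrogate $\mean{u}_{n+1}'$ from \cref{eq:pfGN}, run a descent-lemma argument on $l$, and convert the resulting gradient decrease into a multiplicative decrease of $l-l_{\min}$ through the PL condition \cref{eq:PL}. Write $r_n = z - \mcH(\mean{u}_n)$ and $G_n = \nabla \mcH(\mean{u}_n)$, so that $\nabla l(\mean{u}_n) = -G_n\matT r_n$. First I would use the push-through form of the Sherman--Morrison--Woodbury identity (\cref{lem:SMW}) to rewrite the surrogate increment as $\mean{u}_{n+1}' - \mean{u}_n = -M_n \nabla l(\mean{u}_n)$, where $M_n = \tilde{h}_n (\tilde{C}_n^{uu})^{1/2}\big(I + \tilde{h}_n (\tilde{C}_n^{uu})^{1/2} G_n\matT G_n (\tilde{C}_n^{uu})^{1/2}\big)^{-1}(\tilde{C}_n^{uu})^{1/2}$ is symmetric and positive semidefinite.

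Next I would split, via $L$-smoothness, $l(\mean{u}_{n+1}) \le l(\mean{u}_{n+1}') + \langle \nabla l(\mean{u}_{n+1}'), e_n\rangle + \frac{L}{2}\normo{e_n}^2$ with $e_n = \mean{u}_{n+1}-\mean{u}_{n+1}'$ the linearization error, and bound the surrogate separately by $l(\mean{u}_{n+1}') \le l(\mean{u}_n) - \nabla l\matT M_n(I - \frac{L}{2}M_n)\nabla l$. The heart of the proof is the deterministic operator lower bound $M_n(I - \frac{L}{2}M_n) \succeq \alpha_n \tilde{C}_n^{uu}$ with $\alpha_n = \frac{\tilde{h}_n(1+2\mu M^2)}{2(1+\mu M^2)^2}$, which uses $G_n\matT G_n \preceq M^2 I$, $\normo{\tilde{C}_n^{uu}} \le \normo{C_n^{uu}}$, the normalization $\tilde{h}_n \normo{C_n^{uu}} = \mu$, and $\mu \le L^{-1}$ to keep the quadratic correction subdominant. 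Because this bound holds surely for every dropout realization (the deviation dynamics \cref{eq:DEKIdevi} uses no dropout), conditioning turns it into $\mE_n[\nabla l\matT M_n(I-\frac{L}{2}M_n)\nabla l] \ge \alpha_n \nabla l\matT \bar{C}_n^{uu}\nabla l$ with $\bar{C}_n^{uu} = \mE_n \tilde{C}_n^{uu}$ given by \cref{eq:ExpC}. Then $\bar{C}_n^{uu} \succeq \lambda(1-\lambda)\diag(C_n^{uu})$, the diagonal lower bound of \cref{prop:ensemble_collapse_diag}, and the cancellation $\tilde{h}_n\normo{C_n^{uu}} = \mu$ reduce the descent to a multiple of $\normo{\nabla l(\mean{u}_n)}^2$; the PL condition then delivers the contraction $-2\beta_0(l(\mean{u}_n)-l_{\min})$ with $\beta_0$ as in \cref{eq:beta0}.

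For the error term, \cref{prop:LinearErr} gives $\normo{e_n} \le C\tilde{h}_n\normo{\tilde{C}_n^{uu}}^{3/2}\normo{r_n}$, while an SMW singular-value estimate yields $\normo{\mean{u}_{n+1}'-\mean{u}_n} \le \mu^{1/2}\normo{r_n}$ and hence $\normo{\nabla l(\mean{u}_{n+1}')} \le (M + L\mu^{1/2})\normo{r_n}$. Combining these with $\tilde{h}_n\normo{\tilde{C}_n^{uu}}^{3/2} \le \mu\normo{C_n^{uu}}^{1/2}$ and $\normo{r_n}^2 = 2l(\mean{u}_n)$ bounds $\langle\nabla l(\mean{u}_{n+1}'),e_n\rangle$ by $2C\mu(M+L\mu^{1/2})\normo{C_n^{uu}}^{1/2}l(\mean{u}_n)$ and $\frac{L}{2}\normo{e_n}^2$ by $LC^2\mu^2\normo{C_n^{uu}}l(\mean{u}_n)$, i.e. by $\Delta_n\, l(\mean{u}_n)$ with $\Delta_n$ as in \cref{eq:Delta_n}. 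Writing $l(\mean{u}_n) = (l(\mean{u}_n)-l_{\min}) + l_{\min}$ produces both the $\Delta_n(l(\mean{u}_n)-l_{\min})$ and the $\Delta_n l_{\min}$ contributions, and assembling the surrogate descent with these error bounds gives \cref{eq:onestep}.

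I expect the sharp operator inequality $M_n(I-\frac{L}{2}M_n)\succeq\alpha_n\tilde{C}_n^{uu}$ to be the main obstacle. Since $\tilde{C}_n^{uu}$ and $G_n\matT G_n$ do not commute, one cannot diagonalize them simultaneously, and bounding the two factors separately loses the factor $(1+2\mu M^2)/(1+\mu M^2)$, leaving only the cruder constant $\tilde{h}_n/[2(1+\mu M^2)]$. Recovering the stated $\alpha_n$ requires a variational estimate that is tight exactly along the leading eigendirection of $\tilde{C}_n^{uu}$, where $G_n\matT G_n$ saturates $M^2 I$ and $\tilde{h}_n\normo{C_n^{uu}}=\mu$: one checks that the scalar map $m\mapsto m-\frac{L}{2}m^2$, applied to the eigenvalues of $M_n$ and normalized per unit eigenvalue of $\tilde{C}_n^{uu}$, is minimized at the top of the spectrum under $\mu\le L^{-1}$, where it equals $\frac{1+2\mu M^2}{2(1+\mu M^2)^2}$. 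Keeping the deterministic/stochastic split clean, so that the dropout expectation only ever acts linearly on $\tilde{C}_n^{uu}$ through \cref{eq:ExpC} and never through the nonlinear $M_n$, is what makes the conditional-expectation step rigorous.
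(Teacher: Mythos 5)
Your proposal is correct and, modulo a change of variables, it is the paper's proof: the same splitting into the Gauss--Newton surrogate $\mean{u}'_{n+1}$ plus a linearization error controlled by \cref{prop:LinearErr}, the same merging of the first- and second-order descent terms (your $\alpha_n$ equals $\tilde h_n\,\psi(x)/x$ at $x=\mu M^2$ for the paper's $\psi(x)=x(1+2x)/(2(1+x)^2)$), the same linear action of $\mE_n$ on $\tilde{C}_n^{uu}$ via \cref{eq:ExpC}, the diagonal bound of \cref{prop:ensemble_collapse_diag}, and the PL condition, yielding identical constants $\beta_0$ and $\Delta_n$; your two-step application of $L$-smoothness produces the same residual bound $(M+L\mu^{1/2})\normo{r_n}$ as the paper's single expansion, where $r_n=z-\mcH(\mean{u}_n)$. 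The one substantive difference is where the quadratic forms live. The paper stays in data space: with $B=\tilde{h}_nG_n\tilde{C}_n^{uu}G_n\matT$ both descent terms become $\ip{r_n}{\psi_i(B)r_n}$ for scalar functions of the \emph{single} PSD matrix $B$, so the commutation question you worry about never arises. You work in parameter space with $M_n$, and your stated justification of the key bound $M_n(I-\tfrac{L}{2}M_n)\succeq\alpha_n\tilde{C}_n^{uu}$ --- comparing ``eigenvalues of $M_n$ normalized per unit eigenvalue of $\tilde{C}_n^{uu}$'' --- is not rigorous as written, precisely because $M_n$ and $\tilde{C}_n^{uu}$ do not commute and hence do not share eigendirections. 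The inequality is nevertheless true and has a short proof that recovers the paper's computation: factor $(\tilde{C}_n^{uu})^{1/2}$ out of both sides, so the claim reduces to $(I+A)^{-1}-\tfrac{L\tilde{h}_n}{2}(I+A)^{-1}\tilde{C}_n^{uu}(I+A)^{-1}\succeq g(\mu M^2)I$ with $A=\tilde{h}_n(\tilde{C}_n^{uu})^{1/2}G_n\matT G_n(\tilde{C}_n^{uu})^{1/2}$ and $g(a)=(1+2a)/(2(1+a)^2)$; then $\tfrac{L\tilde{h}_n}{2}\tilde{C}_n^{uu}\preceq\tfrac12 I$ (from $L\mu\le 1$ and $\normo{\tilde C_n^{uu}}\le\normo{C_n^{uu}}$) together with $0\preceq A\preceq\mu M^2I$ reduce everything to the monotone decreasing scalar function $g$ of the single PSD matrix $A$. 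With that repair your argument is complete and delivers exactly \cref{eq:onestep}.
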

See \cref{app:onestep} for the proof. 

\begin{proof}[Proof for \cref{thm:convergence}]
By \cref{lem:onestep}, the one-step estimation \cref{eq:onestep} holds. First we control the last term $\Delta_n l_{\min}$ in \cref{eq:onestep}. By \cref{prop:ensemble_collapse}, the ensemble collapses exponentially,
\[
    \normo{C_n^{uu}} \leq \normo{C_0^{uu}} \delta^{2n}, \quad \delta := (1+\gamma^2\theta)^{-1} \in (0,1).
\]
Thus $\Delta_n$ can be bounded by
\[
    \Delta_n \leq C_1 \delta^{n},
\]
for $C_1 = 2 C \mu (M + L \mu^{1/2} ) \normo{C_0^{uu}}^{1/2} + L C^2 \mu^2 \normo{C_0^{uu}}$ (see \cref{eq:Delta_n}). Consider finding $n_0$ so that $\forall n\geq n_0, \Delta_n \leq \beta_0$. One can take
\begin{equation}    \label{eq:n0}
    n_0 = \Big\lceil \frac{\log (\beta_0^{-1}C_1) }{\log \delta^{-1}} \Big\rceil.
\end{equation}
So that when $n>n_0$, $1-2\beta_0 + \Delta_n \leq 1- \beta_0$, and \cref{eq:onestep} leads to 
\begin{equation}    \label{eq:pf_onestep}
    \mE_n l(\mean{u}_{n+1}) - l_{\min} \leq (1-\beta_0) (l(\mean{u}_n) - l_{\min}) + C_1 \delta^{n} l_{\min}.
\end{equation}
Next we take 
\begin{equation}    \label{eq:beta}
    \beta = \min \{ \beta_0, \frac{\gamma^2 \theta}{ 2 ( 1 + \gamma^2 \theta ) } \},    
\end{equation}
then \cref{eq:pf_onestep} stills holds with $\beta_0$ replaced by $\beta$. By induction, one obtain for $n>n_0$, 
\begin{align*}
    & \mE_{n_0} l(\mean{u}_{n}) - l_{\min} \\
    \leq~& (1-\beta)^{n-n_0} ( l(\mean{u}_{n_0}) - l_{\min}) + C_1 l_{\min} \sum_{k=n_0}^{n-1} (1-\beta)^{n-1-k} \delta^{k}.
\end{align*}
By definition, $\beta \leq  \frac{\gamma^2 \theta}{ 2 ( 1 + \gamma^2 \theta ) } = \frac{1}{2} (1-\delta) \St 1 - \beta \geq \frac{1}{2} (1+\delta) > \delta $, so that
\[
    \sum_{k=n_0}^{n-1} (1-\beta)^{n-1-k} \delta^{k} = (1-\beta)^{n-1} \sum_{k=n_0}^{n-1} \Brac{\frac{\delta}{1-\beta}}^k  \leq \frac{(1-\beta)^{n}}{1-\beta - \delta}.
\]
So that one obtains
\[
    \mE_{n_0} l(\mean{u}_{n}) - l_{\min} \leq (1-\beta)^{n-n_0} (l(\mean{u}_{n_0}) - l_{\min} + C_2 ),
\]
where $C_2 = (1-\beta)^{n_0} C_1/(1-\beta - \delta)$. 
\end{proof}

\begin{lemma}   \label{lem:scaling}
Consider the high dimensional regime $J\ll d_u$. Assume that the $\mean{\kappa} = \mcO(1)$ and $c = \mcO(M^2)$, where $\mean{\kappa}$ is defined in \cref{eq:KappaBound} and $c,M$ are defined in \cref{asm:main}. Take the step size $\theta \sim M^{-2}$ (see \cref{eq:Ada_step}), then for the convergence rate \cref{eq:beta0} and the warm-up steps \cref{eq:n0}, it holds that 
\[
    \beta = \mcO(J/d_u), \quad n_0 = \mcO(\log d_u).
\]
Here $\mcO$ hides the constants that are independent of $d_u$.
\end{lemma}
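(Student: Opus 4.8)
The plan is to reduce both scalings to the asymptotics in $d_u$ of three quantities appearing in the explicit formulas \cref{eq:beta0}, \cref{eq:beta}, \cref{eq:n0}: the minimal projector diagonal $\min_s P(s,s)$, the initial spread $\normo{C_0^{uu}}$, and the product $\gamma^2\theta$. All remaining parameters $M,L,c,\gamma,\lambda,\mu$ (and hence the rational factor in \cref{eq:beta0}) are $d_u$-independent constants, and the hypotheses $\mean{\kappa}=\mcO(1)$, $c=\mcO(M^2)$, $\theta\sim M^{-2}$ only serve to keep the $d_u$-independent prefactors bounded away from $0$ and $\infty$; the genuine $d_u$-dependence enters solely through the standard Gaussian initialization. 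I would first record that $\mean{\kappa}=\max\{\kappa(C_0^{uu}),3M^2/2\gamma^2\}=\mcO(1)$ forces both $M^2/\gamma^2=\mcO(1)$, whence $\gamma^2\theta=\Theta(1)$ under $\theta\sim M^{-2}$, and $\kappa(C_0^{uu})=\mcO(1)$, which I use later to pin down $\normo{C_0^{uu}}$.

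For $\beta=\mcO(J/d_u)$ the upper bound is the easy direction. Since $\gamma^2\theta=\Theta(1)$, the second entry $\gamma^2\theta/(2(1+\gamma^2\theta))$ of the minimum in \cref{eq:beta} is a $\Theta(1)$ constant, so for $J\ll d_u$ it never binds and it suffices to estimate $\beta_0$. In \cref{eq:beta0} the product $c\cdot\frac{\mu(1+2\mu M^2)}{4(1+\mu M^2)^2}$ is $\mcO(1)$ — this is exactly where $c=\mcO(M^2)$ is used, since the rational factor decays like $M^{-2}$ for large $\mu M^2$ — while $\lambda(1-\lambda)$ and $\mean{\kappa}^{-1}$ are $\Theta(1)$. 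As $P$ is an orthogonal projector of rank $r=\rk(C_0^{uu})\le J-1$, the trace identity gives $\min_s P(s,s)\le d_u^{-1}\sum_s P(s,s)=r/d_u\le (J-1)/d_u$. Hence $\beta_0=\mcO(J/d_u)$ and therefore $\beta=\mcO(J/d_u)$.

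For $n_0=\mcO(\log d_u)$ I start from \cref{eq:n0}. Its denominator is $\log\delta^{-1}=\log(1+\gamma^2\theta)=\Theta(1)$, so it remains to show $\beta_0^{-1}C_1=\mcO(\mathrm{poly}(d_u))$, where $C_1$ is the constant in the proof of \cref{thm:convergence}. This needs a matching lower bound $\beta_0=\Omega(J/d_u)$ and a polynomial upper bound on $C_1$. Since $C=(J-1)^{3/2}H$ from \cref{prop:LinearErr}, one has $C_1=\mcO(J^{3/2}\normo{C_0^{uu}}^{1/2}+J^{3}\normo{C_0^{uu}})$. Under standard Gaussian initialization, concentration of $\normo{\tau_0^{(j)}}^2$ gives $\mathrm{tr}(C_0^{uu})=\Theta(d_u)$; combined with $\kappa(C_0^{uu})=\mcO(1)$ and $r=J-1$ this yields $\normo{C_0^{uu}}=\lambda_1(C_0^{uu})\le \kappa(C_0^{uu})\,\mathrm{tr}(C_0^{uu})/r=\mcO(d_u/J)$. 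Thus $C_1=\mcO(\mathrm{poly}(J)\,d_u)$, and with $\beta_0^{-1}=\mcO(d_u/J)$ we get $\beta_0^{-1}C_1=\mcO(\mathrm{poly}(d_u))$, so $n_0=\mcO(\log d_u)$.

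The \textbf{main obstacle} is the lower bound $\min_s P(s,s)=\Omega(J/d_u)$, the only step requiring a genuine probabilistic argument and the one that upgrades the trivial trace bound to a two-sided estimate (and simultaneously certifies \cref{eq:PBound}). Here I would use the rotational invariance of the standard Gaussian initialization: the column space $\mcV=\mathrm{Im}(C_0^{uu})$ is a Haar-distributed $r$-dimensional subspace of $\mR^{d_u}$, so each $P(s,s)=\normo{Pe_s}^2$ is $\mathrm{Beta}(r/2,(d_u-r)/2)$-distributed with mean $r/d_u=\Theta(J/d_u)$. A lower-tail bound for this Beta law together with a union bound over the $d_u$ coordinates then gives $\min_s P(s,s)\ge \tfrac12 r/d_u=\Omega(J/d_u)$ with high probability. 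Everything else is the bookkeeping described above under the stated $\mcO$-hypotheses.
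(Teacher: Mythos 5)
Your proposal is correct and follows the same overall route as the paper: identify that the only genuine $d_u$-dependence in \cref{eq:beta0} and \cref{eq:n0} enters through $\min_s P(s,s)$, $\normo{C_0^{uu}}$ and $\gamma^2\theta$, then plug in the scalings coming from the Gaussian initialization ($\tr(C_0^{uu})=\Theta(d_u)$, $\tr(P)=r\approx J-1$) while the hypotheses $\mean{\kappa}=\mcO(1)$, $c=\mcO(M^2)$, $\theta\sim M^{-2}$ keep the remaining prefactors of order one. Where you genuinely go beyond the paper is in the treatment of $\min_s P(s,s)$: the paper passes from $\tr(P)=r$ directly to ``$\min_s P(s,s)=\mcO(J/d_u)$'', which is rigorous only as an upper bound (the minimum is at most the average $r/d_u$), and then implicitly uses a matching \emph{lower} bound when it declares $\beta^{-1}C_1=\mathrm{poly}(d_u)$ in the $n_0$ estimate. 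You correctly isolate this as the main obstacle and supply the missing probabilistic ingredient (Haar-distributed column space, $P(s,s)\sim\mathrm{Beta}(r/2,(d_u-r)/2)$, lower tail plus union bound), which simultaneously certifies \cref{eq:PBound}; this is a real improvement in rigor over the paper's order-of-magnitude bookkeeping. One caveat: the specific high-probability bound $\min_s P(s,s)\geq \tfrac12 r/d_u$ requires the lower tail $\Pr(P(s,s)<\tfrac12 r/d_u)\approx e^{-cr}$ to beat the union bound over $d_u$ coordinates, i.e.\ it needs $J\gtrsim \log d_u$; for fixed small $J$ the minimum is typically only of order $(r/d_u)\,d_u^{-2/r}$. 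That weaker, still inverse-polynomial, bound is all you need for $n_0=\mcO(\log d_u)$ (and the upper bound $\beta=\mcO(J/d_u)$ needs no lower bound at all), so your argument survives, but you should either state the $J\gtrsim\log d_u$ condition or weaken the claimed constant in the lower-tail step accordingly.
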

The proof is postponed to \cref{app:scaling}.

\begin{rem}    \label{rem:scaling}
We justify the two assumptions $\mean{\kappa} = \mcO(1)$ and $c = \mcO(M^2)$ in \cref{lem:scaling}. 

(1) By definition, $\mean{\kappa} = \max\{ \kappa(C_0^{uu}), \frac{3M^2}{2\gamma^2}\}$. First note $\frac{M^2}{\gamma^2}$ is approximately the condition number of the regularized map $\mcH$. It is reasonable to assume that it is dimension independent, or otherwise the problem itself is  ill-posed.  For $\kappa(C_0^{uu})$, by definition it is the ratio of the largest and smallest nonzero eigenvalues of $C_0^{uu}$. In the high dimensional regime where $J\ll d_u$, using Gaussian initialization would make $C_0^{uu}$ close to the projection $P$, and thus $\kappa(C_0^{uu}) = \mcO(1)$. 

(2) Note $M^2 \sim \gamma^2$ as we assume the condition number $M^2/\gamma^2 = \mcO(1)$. It is also natrual to assume that $c \sim \gamma^2$, as they are both the lower bounds of the regularized map $\mcH$. When $\mcH$ is linear, one can even take $c = \gamma^2$. Since $\nabla^2 l = \mcH\matT \mcH$, the Polyak-Łojasiewic number can be taken as $c = \lambda_{\min} (\mcH\matT \mcH) = \gamma^2$. Finally, $c = \mcO(M^2)$ follows easily from $M^2\sim\gamma^2$ and $c\sim \gamma^2$. 
\end{rem}

\subsection{Query complexity analysis}
In this section, we consider the lower bound of query complexity for zeroth order algorithms to solve inverse problems, which is closely related to the optimization problems. In the optimization literature, zeroth order algorithms are those who can only evaluate the loss function $l$ at a query point, but not the derivatives of $l$. The query complexity is the number of query points needed for the algorithm to reach a certain accuracy. We mention that since the first formal study in \cite{MR702836}, a lot of results are obtained for different types of optimization algorithms and problem classes, for instance \cite{MR2142598,MR4163541,NIPS2009_2387337b}.

It is natural to extend to the inverse problem setting for zeroth order algorithms that can only evaluate $\mathcal{G}(u)$ at a query point $u$. Ensemble Kalman type algorithms clearly are zeroth order algorithms, and their query complexity will be $nJ$, where $n$ is iteration number needed and $J$ is the ensemble size. Query complexity is important because the evaluation of $\mathcal{G}$, which is often a simulation of an expensive black box model, is the main cost of the operation.
\cref{thm:convergence} indicates the query complexity of DEKI is $\mcO(d_u \log \varepsilon^{-1})$. Next, we show such linear dependence on $d_u$ is optimal.

Mathematically speaking,  a zeroth order algorithm $A$ solving \cref{eq:InverseProblem} can be described as a map of the form
\[
    A: (U,Y,y) \mapsto u.
\]
Here $U = (u_1,\dots,u_n) \in \mR^{d_u\times n}$ are the query points, $Y = (y_1,\dots,y_n) \in \mR^{d_y\times n}$ are the model outputs (i.e. $y_i = \mcG(u_i)$), $y\in\mR^{d_y}$ is the observation data that we try to invert, and $u\in\mR^{d_u}$ is the output of the algorithm. We say $A(U,Y,y)$ is a zeroth order algorithm with $n$ \textit{admissible} query, if the query points $u_1,\ldots,u_n$ are obtained from some admissible rule, i.e. $u_{i+1}=B(y,u_t,\mathcal{G}(u_t),t\leq i)$ for some update rule $B$, and thus the output is a map $A(U,\mathcal{G}(U),y)$. 

Next we show that even for linear inverse problems, with $n\leq d_u/2$ query points, any algorithms cannot identify the optimal solution to \cref{eq:TikReg} accurately.
\begin{proposition}
\label{prop:complexity}
Consider solving \cref{eq:TikReg} for linear forward maps with regularization operator $\mcC_0^{-1/2}=I$. Then for any zeroth order algorithm $A$ using any $n$ admissible query $U\in \mR^{d_u\times n}$ where $n\leq d_u/2$, it holds that
\[
\max_{G:\|G\|\leq 2} \|A(U,GU,y)-u^*(G)\|>0.1\norm{y},
\]
where $u^*(G)$ is the optimal solution to \cref{eq:TikReg}.
\end{proposition}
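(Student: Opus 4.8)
The plan is to run a classical indistinguishability (adversarial) argument: I will exhibit two admissible linear forward maps of norm at most $2$ that produce \emph{identical} query responses, hence force the algorithm to return one and the same output, yet whose Tikhonov optima are far apart. Since a single output cannot be close to two well-separated targets, the worst-case error over $\{G:\|G\|\le 2\}$ is bounded below. The regularized optimum here has the closed form $u^*(G)=(G\matT G+I)^{-1}G\matT y$, which I will use explicitly.

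First I would run the given algorithm against the trivial map $G_1=0$. Because every response is $G_1 u_t\equiv 0$, the adaptive rule $u_{i+1}=B(y,u_t,G_1 u_t,t\le i)$ generates a definite query matrix $U=(u_1,\dots,u_n)$; set $V=\mathrm{span}\{u_1,\dots,u_n\}$, so $\dim V\le n\le d_u/2<d_u$. Here $u^*(G_1)=0$. The core of the construction is the second map: since $\dim V<d_u$ the orthogonal complement $V^{\perp}$ is nontrivial, so I pick a unit vector $w\in V^{\perp}$ and set $G_2=\phi\, w\matT$ with $\phi=y/\|y\|$. Then $\|G_2\|_2=1\le 2$, and crucially $G_2 u=\phi\,(w\matT u)=0$ for every $u\in V$, so $G_2$ agrees with $G_1$ on all queries in $U$.

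Next I would show both runs coincide. Arguing by induction on the query index: the first query depends only on $y$ (common to both maps), and if the first $i$ queries agree then the responses $G_2 u_t=0=G_1 u_t$ agree, so $B$ returns the same $u_{i+1}$; hence the entire query sequence, and therefore the output $A(U,G_2U,y)=A(U,G_1U,y)=:\hat u$, is identical under $G_1$ and $G_2$. A short Sherman--Morrison computation gives the separation: with $G_2\matT G_2=w w\matT$ and $G_2\matT y=\|y\|\,w$, one gets $u^*(G_2)=(I-\tfrac12 w w\matT)\|y\|\,w=\tfrac12\|y\|\,w$, so $\|u^*(G_2)\|=\|y\|/2$. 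Finally, since $\hat u$ must serve both optima, the triangle inequality yields $\max_i\|\hat u-u^*(G_i)\|\ge \tfrac12\|u^*(G_1)-u^*(G_2)\|=\|y\|/4>0.1\|y\|$, and taking the maximum over $\|G\|\le 2$ proves the claim.

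The main obstacle is the adaptivity of the queries: because $U$ itself depends on the map through $B$, one cannot simply fix $U$ and vary $G$. The resolution is exactly the order of operations above: I construct $G_2$ only \emph{after} observing the run against $G_1$, tailoring $w$ to lie in $V^{\perp}$, and the step-by-step indistinguishability of the zero responses then guarantees the two runs never diverge. A secondary routine point is checking the admissibility constraint $\|G_2\|\le 2$ and that the rank-one form keeps $u^*(G_2)$ large while annihilating $V$; both are immediate from the chosen $G_2=\phi w\matT$.
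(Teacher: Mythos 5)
Your proposal is correct, and it follows the same high-level skeleton as the paper's proof --- exhibit two forward maps of norm at most $2$ that return identical responses on all queries (so the algorithm's output is the same) but whose Tikhonov optima are well separated, then apply the triangle inequality. The difference lies in the construction of the two instances. The paper anchors the argument at an arbitrary $\|G'\|\leq 1$, takes an SVD of the query matrix $U$, parametrizes the consistent class $P=\{(C,B)\Sigma\matT:\|(C,B)\|\leq 2\}$, and runs a two-case analysis (depending on where the mass of $Q\matT y$ sits) to build a perturbation $B$ achieving separation $\geq 1/(3\sqrt{2})$. You instead take the trivial base instance $G_1=0$, observe that the run against it produces a query span $V$ of dimension at most $n<d_u$, and plant a rank-one adversary $G_2=\phi w\matT$ with $w\in V^{\perp}$, for which $u^*(G_2)=\tfrac12\|y\|w$ follows from a one-line Sherman--Morrison computation. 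Your version is more elementary, makes the handling of adaptivity explicit via the induction on query indices (the paper asserts $A(U,GU,y)\equiv A(U,G'U,y)$ for $G\in P$ without spelling this out), yields a better constant ($0.25\|y\|$ versus $0.1\|y\|$), and only requires $n\leq d_u-1$ rather than $n\leq d_u/2$ (the paper needs $d_u-r\geq r$ for its block decomposition). What the paper's more elaborate construction buys is a statement local to an arbitrary reference map $G'$ --- the adversary can be realized as a bounded perturbation of any given consistent forward map rather than only of the zero map --- though this extra generality is not needed for the proposition as stated. Both arguments silently assume $y\neq 0$, without which the strict inequality is vacuous.
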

Proofs can be found in \cref{app:complexity}. The proposition implies that we need at least $n>d_u/2$ queries of the forward map to ensure the algorithm $A$ finds the optimal solution within $\mcO(1)$ error. For DEKI, in each step we use $2J$ queries, so that at least $\mcO(d_u/J)$ steps are needed. Also note that DEKI reaches the optimal $\varepsilon$-dependence ($\mcO(\log \varepsilon^{-1})$) as a zeroth order stochastic algorithm, see \cite{MR2142598,MR3627456}. This proves the optimality of DEKI.

\section{Numerical examples}
\label{sec:numerics}
We will apply DEKI to two numerical examples with comparison to vanilla EKI and LEKI. For fair comparison, we use the same step size of these EKI type algorithms.

\subsection{Linear transport equation}  \label{sec:LTE}
Consider the inverse problem of determining the initial wave based on observations on a later wave field. Here we consider the toy model where the wave speed is constant. The one-dimensional linear transport equation is
\[
    \pdiff{U}{t}(x,t) + a \pdiff{U}{x}(x,t) = 0, \quad x \in [0,1].
\]
For simplicity, we use the periodic boundary condition $U(t,0) = U(t,1)$. Then the analytical solution is $U(x,t) = U( \{ x-at \} ,0)$, where $\{x\} = x - \lfloor x \rfloor$ is the fractional part of $x$. The analytical solution is used in the numerical tests.

Suppose we can observe the wave at time $T=1$ at $d_y$ equi-spaced points $x^D_i = i/d_y ~(i=1,\dots d_y)$, i.e. the data is generated by
\begin{equation}
\label{eq:LTEobs}
    y = (y_1,\dots,y_{d_y}), \quad y_i = U(T,x^D_i) + \xi_i,
\end{equation}
where $\xi_i\sim \mcN(0,\sigma^2)$ and $\sigma = 10^{-2}$ is the observational noise. Introduce a computational grid with $d_u$ equally spaced grid points $x_i = i/d_u$ and denote the restriction of spatial functions $U(x)$ to this grid by
\[
    u = (U(x_1),\dots U(x_{d_u})) \in \mR^{d_u}.
\]
The forward map is then defined by
\begin{equation}
    \mcG: u_0 \in \mR^{d_u} \mapsto y \in \mR^{d_y},
\end{equation}
where $ u_0 = (u(0,x_1),\dots,u(0,x_{d_u})) \in \mR^{d_u} $ is the discrete initial wave to be inferred from the observations \cref{eq:LTEobs}. 

In numerical experiments, $u_0$ is randomly generated $u_0 \overset{\iid} \sim \mcN(0,I_{d_u})$, and we choose the regularization operator $\mcC_0^{-1/2} = \gamma d_u^{-1/2} I$ where $\gamma = 0.1$. We will run the DEKI \cref{alg:DEKI} with dropout rate $\lambda = 0.5$. Fix reference step size $\tilde{h}=2.5$ and set the ratio $h_n/\tilde{h}_n=0.1$, i.e.,
\begin{equation}
\label{eq:StepSizes}
    \tilde{h}_n = \frac{\tilde{h}}{\norm{C_n^{uu}} + \epsilon_0}, \quad h_n = 0.1 \tilde{h}_n.
\end{equation}
Note we add a small parameter $\epsilon_0=10^{-12}$ for numerical stability. The ensemble is initialized as Gaussian variables $u_0^{(j)} \overset{\iid} \sim \mcN(0,\gamma^2 I_{d_u})$. 

We compare DEKI with vanilla EKI and LEKI. For each experiment, we run $N=10^2$ steps and plot the relative loss
\begin{equation}
\label{eq:RelaMisfit}
    e_n = \frac{l(\mean{u}_n) - l_{\min}}{\norm{y}^2},
\end{equation}
where $l_{\min}$ is obtained by optimizing the loss function. We repeat the experiments $N_{rep} = 100$ times and record the mean and deviation of the relative loss. For each experiment, the data $y$ is fixed and different realizations of the initial ensemble and dropout are used.

The comparison is shown in \cref{fig:LTEcmpMisfit}. We can see that in the high dimension and small ensemble regime, the vanilla EKI fails due to the subspace issue, while DEKI and LEKI work well. We can also see the linear convergence of DEKI. 

\begin{figure}[htbp]
    \centering
    \includegraphics[scale = 0.33, trim = 50 200 50 200]{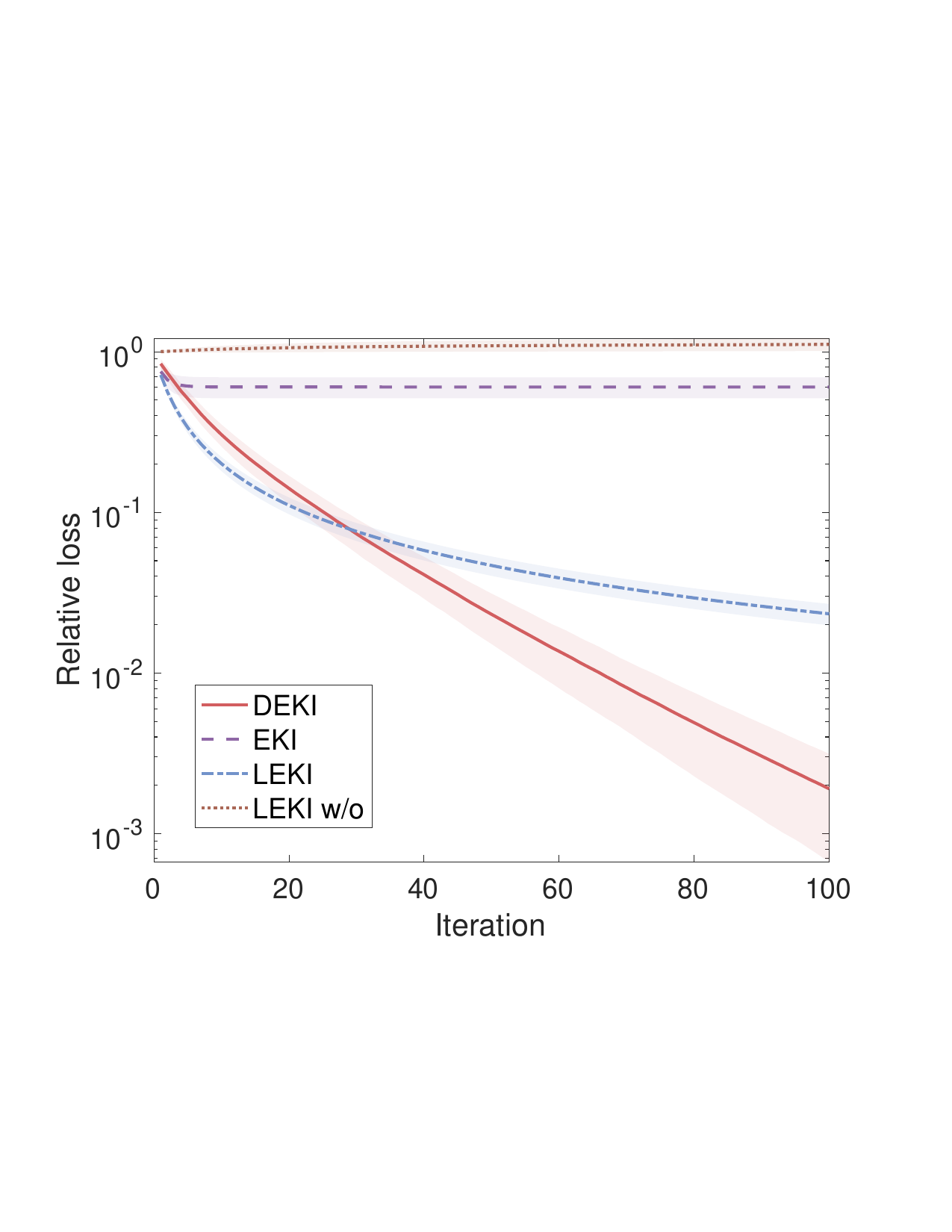}
    \includegraphics[scale = 0.33, trim = 0 200 50 200]{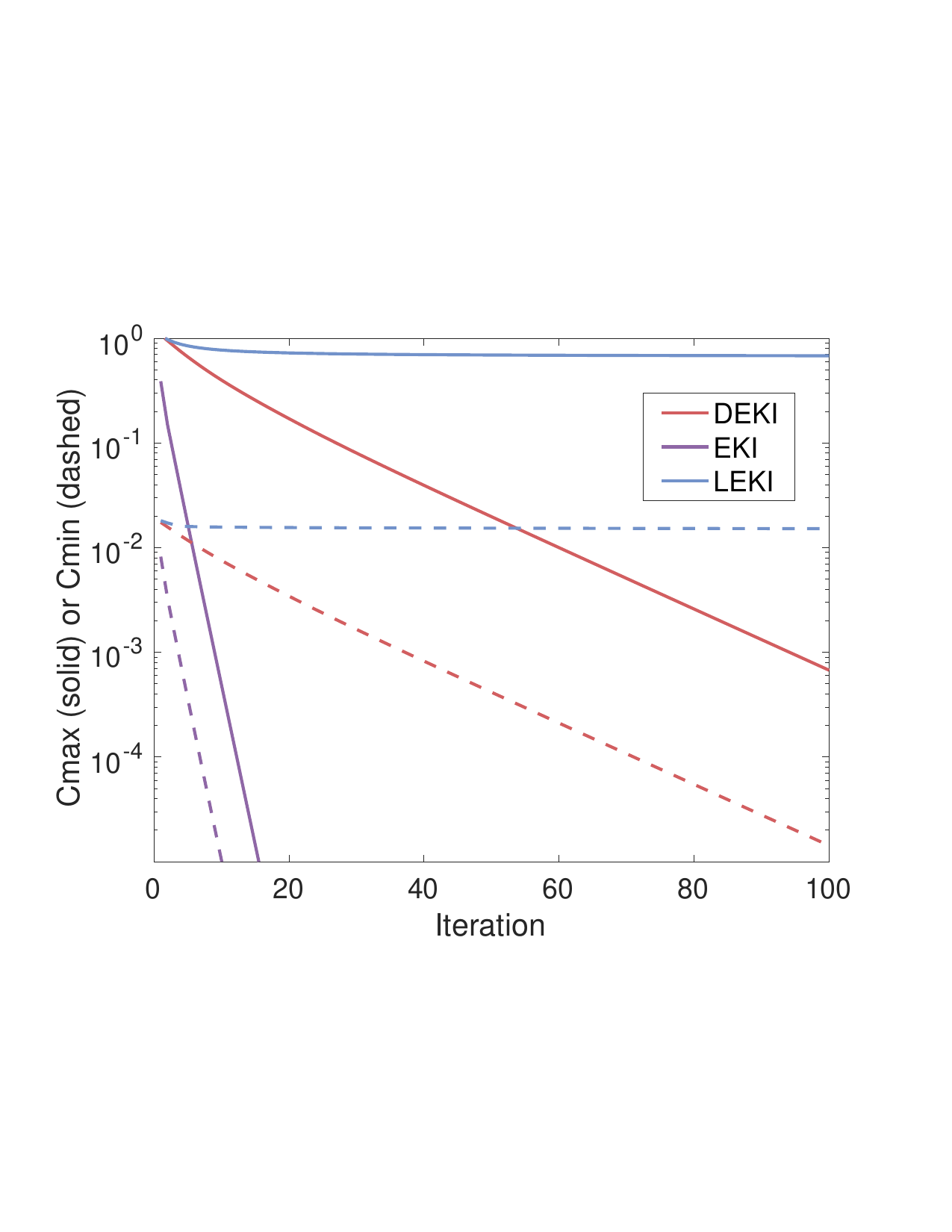}
    \caption{Comparison of DEKI with vanilla EKI and LEKI in linear transport equation model with $d_u = 120, J = 20$. Left: decay of the relative data misfit \cref{eq:RelaMisfit}, here LEKI w/o stands for LEKI without prior knowledge of the transport speed. Right: collapse of the ensemble, the solid line represents $\max_{s} C_n^{uu}(s,s)$ and the dashed line represents $\min_{s} C_n^{uu}(s,s)$.}
    \label{fig:LTEcmpMisfit}
\end{figure}

As discussed before, LEKI needs to design the localization matrix, which requires prior knowledge (here it is the transport speed). Details of the LEKI scheme and the localization design are provided in \cref{app:LEKI}. We show that without such prior knowledge, the LEKI still fails if the localization is not well designed. We also show the controllable decay of the maximum and minimum of the diagonal elements of the covariance matrix, confirming our analysis result.

We compute the convergence rate by
\begin{equation}
\label{eq:ConvRate}
    r = \frac{ \log e_m - \log e_n }{m-n}.
\end{equation}
In numerical experiments, it is obtained by averaging over $100$ independent experiments under different problem dimensions $d_u$ and ensemble sizes $J$. For the repeated experiments here, the problem setup (the transport speed $a$), data, initial ensemble and dropout are all randomized. \cref{fig:LTEConvRate} shows the linear dependence of the convergence rate on $d_u^{-1}$ and $J$. This verifies the theoretical prediction in \cref{thm:convergence}, which is expected since \Cref{asm:main} is strictly satisfied in this model.

\begin{figure}[htbp]
    \centering
    \includegraphics[scale = 0.33, trim = 50 200 50 200]{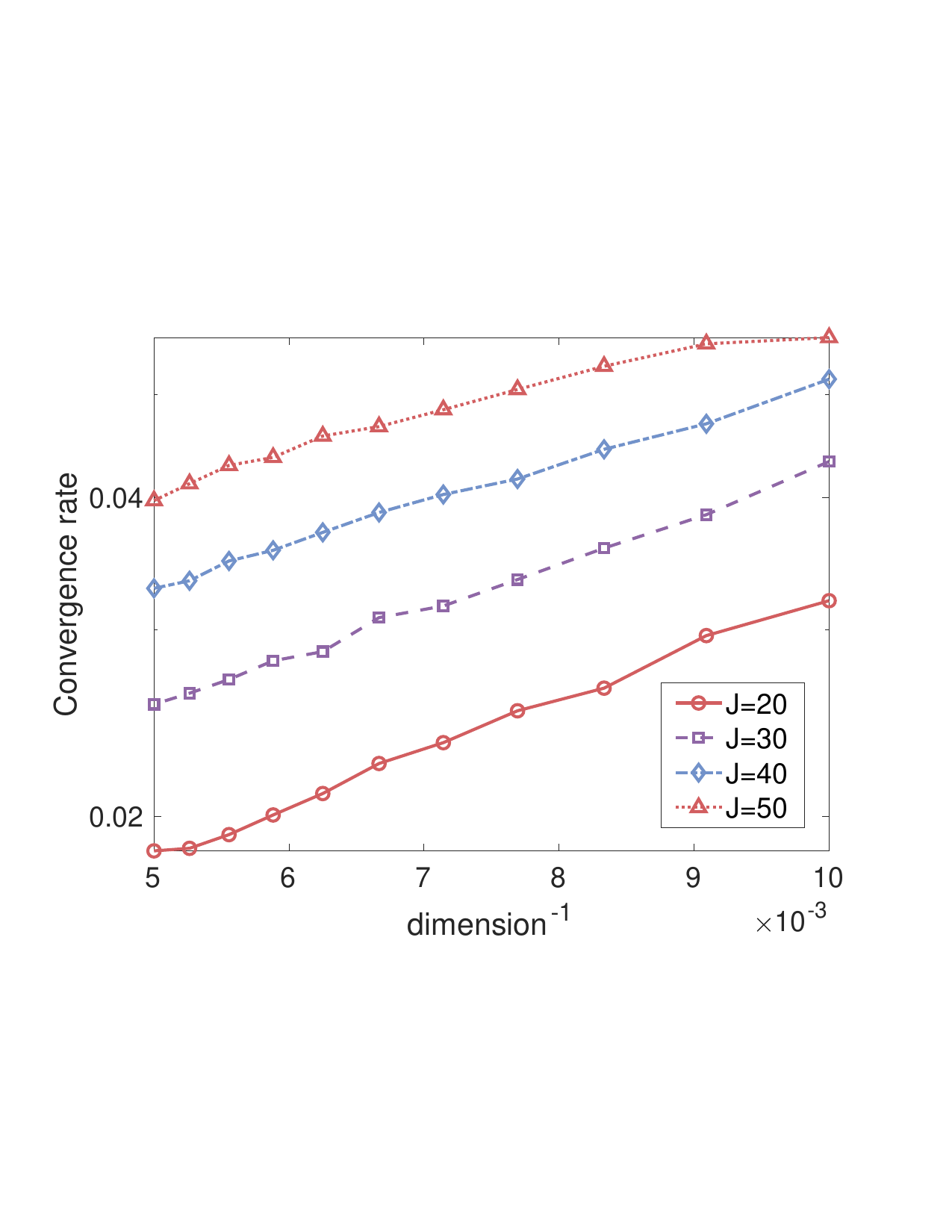}
    \includegraphics[scale = 0.33, trim = 0 200 50 200]{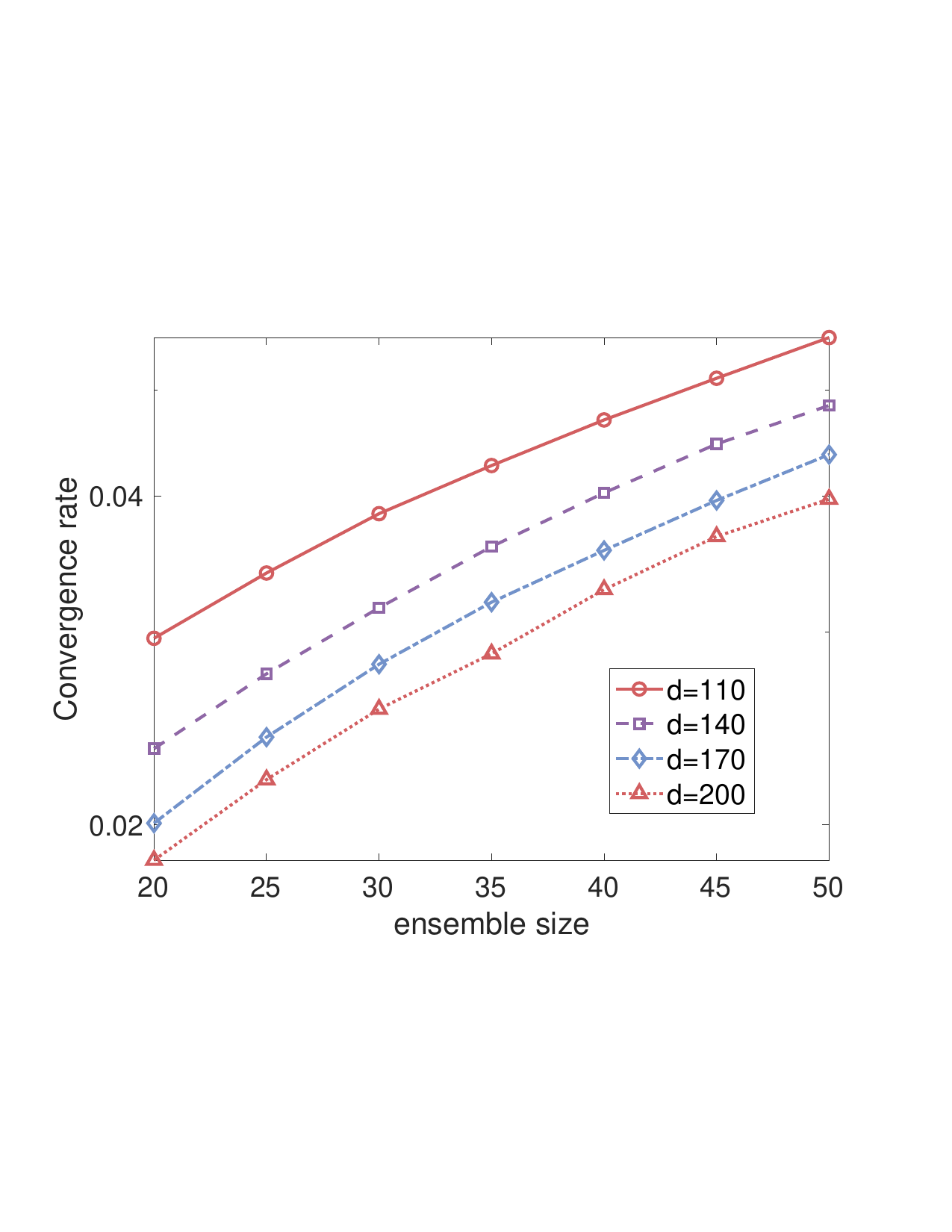}
    \caption{Linear dependence of the convergence rate \cref{eq:ConvRate} of DEKI on reciprocal of dimension $d_u^{-1}$ and ensemble size $J$, tested in linear transport equation model. }
    \label{fig:LTEConvRate}
\end{figure}

Finally, we comment that the CPU time of the EKI type method is mainly determined by the number of forward evaluations, since the it is usually the most time-consuming part. In each iteration, DEKI needs twice forward evaluations compared to
EKI. So for fixed iteration numbers, DEKI would costs roughly double CPU time.
But if we fix a certain accuracy, DEKI can still win as it converges faster.

\subsection{Darcy's Law}
Consider the 2d elliptic equation on $\Omega = [0,1]^2$:
\begin{equation}
\label{eq:2DEllip}
    -\nabla \cdot \Brac{ \exp (a(\bx)) \nabla v(\bx) } = f(\bx), \quad v|_{\partial \Omega} = 0.
\end{equation}
Consider estimating the log-permeability field $a(\bx)$ from observations on the field $v(\bx)$: pick evaluation points $\bx_1,\dots,\bx_{d_y}$, and 
\[
    y = (y_1,\dots,y_{d_y}), \quad y_j = v(\bx_j) + \xi_j, 
\]
where $v(\bx)$ is the solution of \cref{eq:2DEllip} given parameter $a(\bx)$ and $\xi_j\sim\mcN(0,\sigma_0^2) ~(\sigma_0 = 10^{-3})$ is the observational noise. Thus the forward map is
\begin{equation}
    \mcG: a(\bx) \mapsto y.
\end{equation}
The testing parameter $a_{\rm true}(\bx)$ is generated from a Gaussian random field with mean $\mean{a} \in \mR$ and covariance operator
\[
    K(\bx_1;\bx_2) = K(x_1,y_1;x_2,y_2) := \sigma^2 \exp \Brac{ - \frac{(x_1-x_2)^2}{2l_x^2} - \frac{(y_1-y_2)^2}{2l_y^2} }. 
\]
In computation, we represent $a(\bx)$ in the truncated Karhunen–Lo\'eve basis: 
\[
    a(\bx) \approx \mean{a} + \sum_{i=1}^{d_u} a_i \varphi_i(\bx), \quad \varphi_i(\bx) = \sqrt{\lambda_i}  \psi_i(\bx).
\]
where $(\lambda_i,\psi_i(\bx))$ are the eigenpairs of the covariance operator $K$ arranged in descending order $\lambda_1\geq \lambda_2 \geq \cdots$. Denote the coordinate representation $\hat{a} = (a_1,\dots,a_{d_u}) \in \mR^{d_u}$. The truncated dimension $d_u$ is determined by a threshold $\epsilon$:
\[
    d_u(\epsilon) = \inf\left\{ d \in\mZ : \sum_{i>d} \lambda_i \leq \epsilon \cdot \tr(K) \right \}.
\]
Namely, $d_u$ is the smallest number of basis functions that can approximate $a(\bx)$ with $\epsilon$-accuracy. The regularization term under this basis can be simply chosen as
\[
    R(\hat{a}) = \gamma^2 \norm{\hat{a}}_{l^2}^2 = \gamma^2 \sum_{i=1}^{d_u} a_i^2.
\]
Or equivalently, the regularization operator $\mcC_0^{-1/2} = \gamma I$.

We test under different setups listed in \cref{tab:SetupPara}, and fix external force $\displaystyle f(x,y) = 13\pi^2 \sin(2\pi x) \sin (3\pi y) $. The forward problem is solved using finite element method on a $32\times 32$ grid. The observation points are chosen as the center of the $8\times 8$ subblocks of $\Omega$, so that the data dimension $d_y=64$.

\begin{table}[htbp]
\centering
\renewcommand{\arraystretch}{1.25}
\caption{Parameters for different setups in Darcy's law model}
\label{tab:SetupPara}
\begin{tabular}{ccccccc}
\hline
        & $m$ & $\sigma$ & $l_x$ & $l_y$ & $\varepsilon$ & $d_u$ \\ \hline
setup 1 & $0$ &   $0.1$   &  $0.1$  & $0.1$  &  $10^{-3}$ & $136$  \\ \hline
setup 2 & $0$ &   $0.1$   &  $0.2$ &  $0.05$ &  $10^{-3}$ &  $142$ \\ \hline
setup 3 & $0$ &   $0.1$   &  $0.15$  & $0.05$  &  $10^{-3}$ & $179$ \\ \hline
setup 4 & $0$ &   $0.1$   &  $0.1$  & $0.05$ &  $10^{-3}$ & $254$
\\ \hline
\end{tabular}
\end{table}

We will compare DEKI with EKI. We do not use LEKI here, since it is not clear how to design localization under the KL basis. The ensemble is initialized as Gaussian variables $\hat{a}_0^{(j)} \overset{\iid} \sim \mcN(0,\gamma^2 I_{d_u})$, and choose the regularization parameter $\gamma = 0.1$. Fix reference step size $\tilde{h}=0.5$ and set the ratio $h_n/\tilde{h}_n=0.1$, see \cref{eq:StepSizes}. 

In each experiment, we run $N=10^4$ steps to show the asymptotic behavior. Note in practice, such long steps are not necessary. In \cref{fig:EllipSetup1cmp}, we compare the decay of the relative data misfit \cref{eq:RelaMisfit} and the relative solution error of DEKI and EKI in setup 1. Here the relative solution error is defined by 
\begin{equation}
\label{eq:RelaError}
    \text{err}(n) = \frac{\norm{a_n(\bx)-a_{\rm true}(\bx)}_{L^2}}{\norm{a_{\rm true}(\bx)}_{L^2}}.
\end{equation}
We repeat each experiment $50$ times under the same settings and plot the mean and standard deviation of these error. Here different realizations of initial ensemble and dropout are used for the repeated experiments.

Again we see that the EKI gets stuck after some steps but DEKI works well. Here the data misfit of DEKI does not decay exponentially. This is due to that the Polyak-Lojasiewic condition in \Cref{asm:main} is not satisfied globally for the Darcy's law model. Nevertheless, the DEKI still performs well both as an optimizer and as an inversion method in the high dimensional settings. 

\begin{figure}[htbp]
    \centering
    \includegraphics[scale = 0.33, trim = 50 200 50 200]{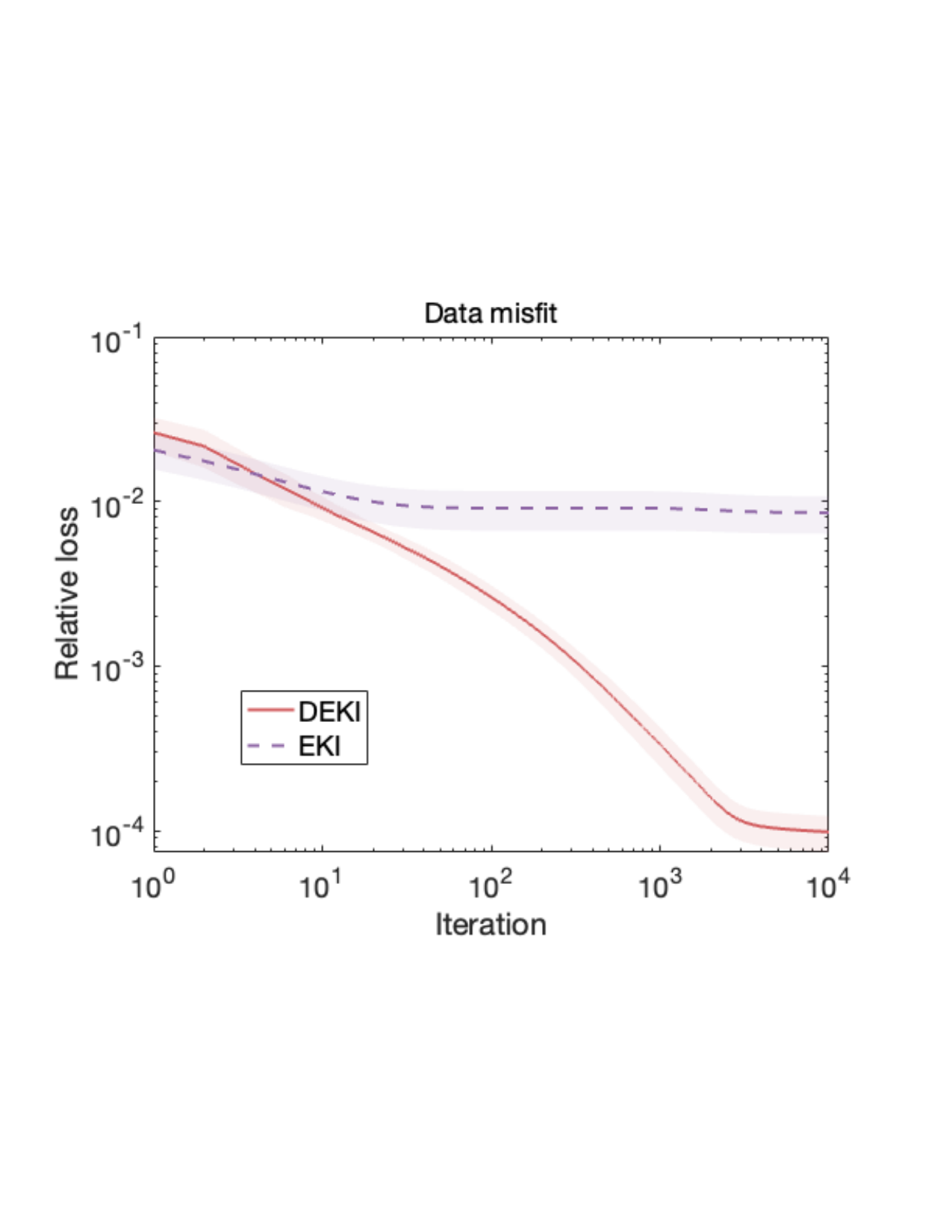}
    \includegraphics[scale = 0.33, trim = 0 200 50 200]{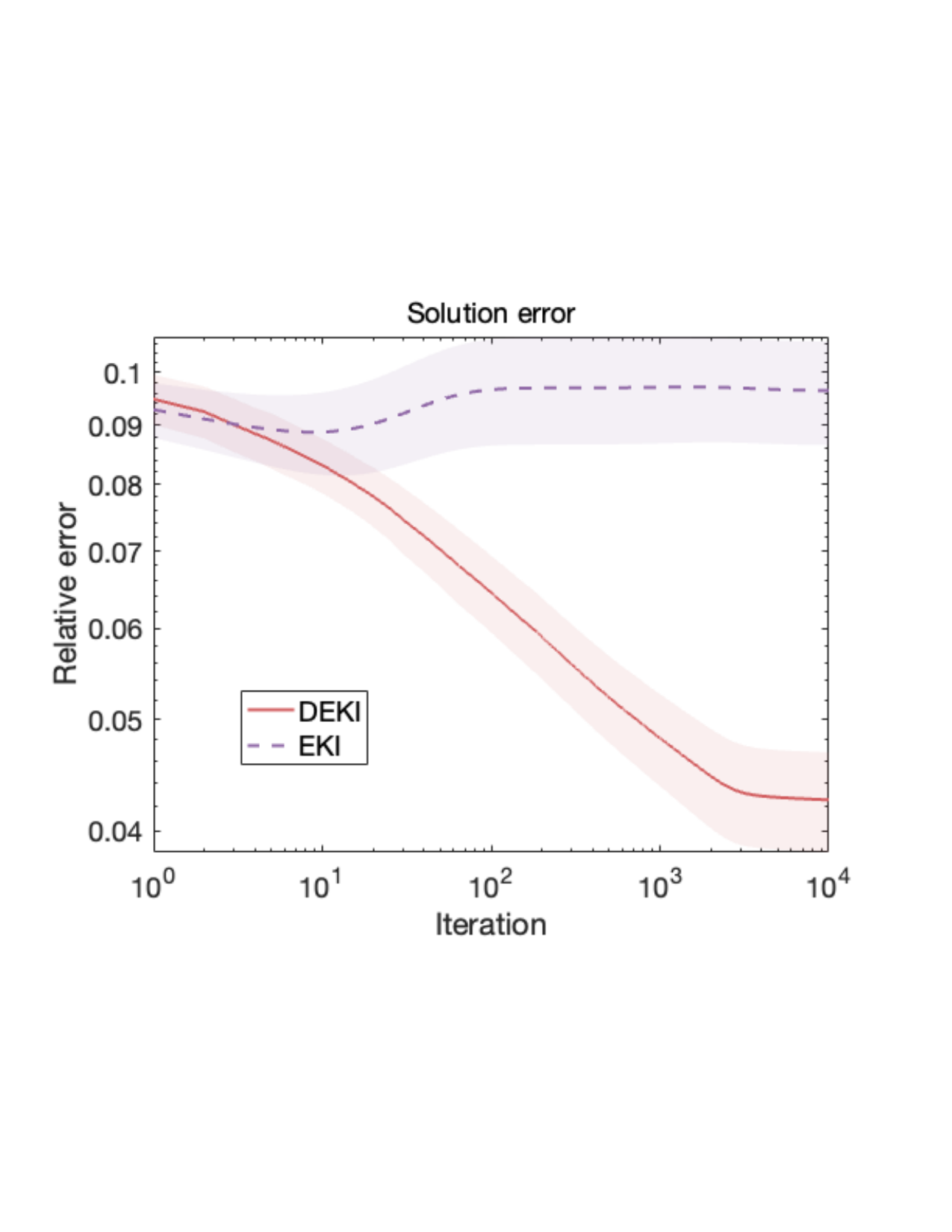}
    \caption{Comparison of DEKI and EKI in Darcy's law model in setup 1 ($d_u=136$) using ensemble size $J=15$. Left: relative data misfit \cref{eq:RelaMisfit}; right: relative solution error \cref{eq:RelaError}.}
    \label{fig:EllipSetup1cmp}
\end{figure}

Plots of the inversion solutions are displayed in \cref{fig:EllipSolu}. We can see that as dimension grows, DEKI is still able to infer the permeability field with good accuracy using small ensemble, while EKI solutions become incorrect.

\begin{figure}[htbp]
    \centering
    \includegraphics[scale = 0.6, trim = 0 140 0 100, clip]{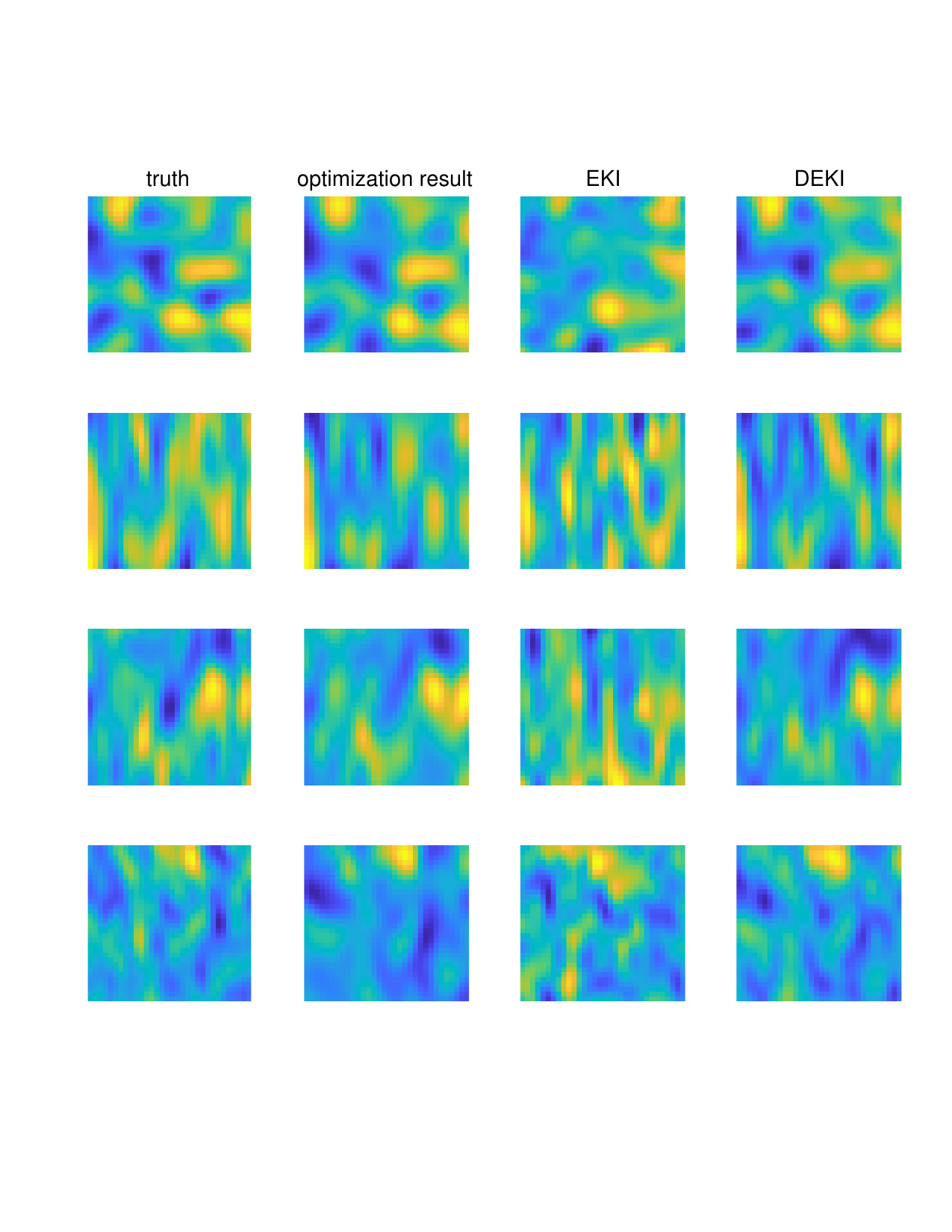}
    \caption{Comparison of the inversion solutions of EKI and DEKI in Darcy's law model in different setups with ensemble size $J=15$. Row 1: setup 1 $(l_x = 0.1, l_y = 0.1, d_u=136)$; Row 2: setup 2 $(l_x = 0.2, l_y = 0.05, d_u=142)$; Row 3: setup 3 $(l_x = 0.15, l_y = 0.05, d_u=179)$; Row 4: setup 4 $(l_x = 0.1, l_y = 0.05, d_u=254)$.}
    \label{fig:EllipSolu}
\end{figure}

\section{Conclusion}
We investigate the application of the dropout technique in EKI method and propose the DEKI scheme. We show that this method mitigates the subspace issue of the vanilla EKI, and performs well in high dimensional inverse problems using small ensemble, both theoretically and numerically. With careful analysis on the dimension dependence, we prove that DEKI converges exponentially for strongly convex problems and the computational complexity scales linearly with dimension. We also show that such scaling is optimal by conducting the query complexity analysis adapting from the optimization literature, which is new for inverse problems.

The idea of incorporating dropout technique into EKI follows the trial of EnKBF with dropout in \cite{MR4568201}. The numerical analysis here is a first step to understand the effectiveness of dropout in these methods. It would be interesting to investigate if the idea can be extended and work as a simpler but effective technique for other ensemble methods in high dimensional problems. It is also interesting to look at the interplay of our DEKI method with traditional zeroth order optimization methods, providing new perspective to understand them and generating new algorithms.

\section*{Acknowledgments}
The work of SL has been 
partially funded by Singapore MOE grant A-8000459-00-00.
The work of SR has been partially funded by Deutsche Forschungsgemeinschaft (DFG) - Project-ID 318763901 - SFB1294.
SR would also like to thank the Isaac Newton Institute for Mathematical Sciences, Cambridge, for support and hospitality during the programme  {\it The Mathematical and Statistical Foundation of Future Data-Driven Engineering} where work on this paper was undertaken. This work was supported by EPSRC grant no EP/R014604/1. The work of XT has been  funded by Singapore MOE grants A-0004263-00-00 and A-8000459-00-00.

\bibliographystyle{siamplain}
\bibliography{EKIdropout}

%%%%%%%%%%%%%%%%%%%%%%%%%%%%%%%%%%%%%%%%%%%%%%%%%%%%%%

\appendix
\section{Missing proofs}
\subsection{Proof for Proposition \ref{prop:LinearErr}}
\label{app:linerr}
Denote for simplicity that
\begin{equation*}
    \begin{split}
        &T_n = \frac{1}{\sqrt{J-1}} [ \tilde{\tau}_n^{(1)}, \dots, \tilde{\tau}_n^{(J)} ], \quad \hat{Y}_n = G_n T_n, \\
        &Y_n = \frac{1}{\sqrt{J-1}} [ \tilde{\mfr}_n^{(1)}, \dots, \tilde{\mfr}_n^{(J)} ], \quad \tilde{\mfr}_n^{(j)} = \mcH(\tilde{u}_n^{(j)}) - \mean{\mcH(\tilde{u}_n)}.
    \end{split}
\end{equation*}
Then it holds $\tilde{C}_n^{uu} = T_n T_n\matT , \tilde{C}_n^{uz} = T_n Y_n\matT , \tilde{C}_n^{zz} = Y_n Y_n\matT $, and we can write \cref{eq:DEKImean} as
\begin{equation}
\label{eq:pfumeanup}
    \mean{u}_{n+1} = \mean{u}_n + \tilde{h}_n T_n Y_n\matT  ( I + \tilde{h}_n Y_n Y_n\matT  )^{-1} ( z - \mcH(\mean{u}_{n}) ),
\end{equation}
and similarly for \cref{eq:pfGN}:
\begin{equation}
\label{eq:pfupmeanup}
    \mean{u}_{n+1}' = \mean{u}_n + \tilde{h}_n T_n \hat{Y}_n\matT  ( I + \tilde{h}_n \hat{Y}_n\matT  \hat{Y}_n )^{-1} ( z - \mcH(\mean{u}_{n}) ).
\end{equation}
Denote $ Z_t = (1-t) Y_n + t \hat{Y}_n $, and define
\begin{equation*}
    S(t) = Z_t\matT  (I + \tilde{h}_n Z_t Z_t\matT  )^{-1},
\end{equation*}
Take difference of \cref{eq:pfumeanup} and \cref{eq:pfupmeanup},
\begin{equation}
\label{eq:pfnonlineardiff}
\begin{split}
    \normo{\mean{u}_{n+1} - \mean{u}_{n+1}' } =~& \normo{ \tilde{h}_n T_n (S(1) - S(0)) (z - \mcH(\mean{u}_{n}) } \\
    \leq~& \tilde{h}_n \normo{T_n} \normo{ S(1) - S(0) } \normo{ z - \mcH(\mean{u}_{n}) }.
\end{split}
\end{equation}
By direct computation and the Sherman-Morrison-Woodbury formula (\cref{lem:SMW}),
\begin{equation*}
    \begin{split}
        S'(t) =~& (Z_t\matT )' (I + \tilde{h}_n Z_t Z_t\matT  )^{-1} \\
        &- \tilde{h}_n Z_t\matT  (I + \tilde{h}_n Z_t Z_t\matT  )^{-1} [ Z_t' Z_t\matT  + Z_t (Z_t\matT )' ] (I + \tilde{h}_n Z_t Z_t\matT  )^{-1} \\
        =~& \Rectbrac{I - \tilde{h}_n Z_t\matT  (I + \tilde{h}_n Z_t Z_t\matT  )^{-1} Z_t } (Z_t\matT )' (I + \tilde{h}_n Z_t Z_t\matT  )^{-1} \\
        &- \tilde{h}_n Z_t\matT  (I + \tilde{h}_n Z_t Z_t\matT  )^{-1} Z_t' Z_t\matT  (I + \tilde{h}_n Z_t Z_t\matT  )^{-1} \\
        =~&  (I + \tilde{h}_n Z_t\matT  Z_t )^{-1} (Z_t\matT )' (I + \tilde{h}_n Z_t Z_t\matT  )^{-1} \\
        &- \tilde{h}_n Z_t\matT  (I + \tilde{h}_n Z_t Z_t\matT  )^{-1} Z_t' Z_t\matT  (I + \tilde{h}_n Z_t Z_t\matT  )^{-1}.
    \end{split}
\end{equation*}
Notice since $Z_tZ_t\matT $ is PSD, apply \cref{lem:PSDcmp} and we get
\begin{equation*}
% \label{eq:pf4ref1}
    \begin{split}
        & \normo{ Z_t\matT  (I + \tilde{h}_n Z_t Z_t\matT  )^{-1}} \\
        =~& \normo{(I + \tilde{h}_n Z_t Z_t\matT  )^{-1} Z_t Z_t\matT  (I + \tilde{h}_n Z_t Z_t\matT  )^{-1}}^{1/2} \\
        \leq~& \normo{(I + \tilde{h}_n Z_t Z_t\matT  )^{-1}}^{1/2} \tilde{h}_n^{-1/2} \normo{ \tilde{h}_n Z_t Z_t\matT  (I + \tilde{h}_n Z_t Z_t\matT  )^{-1}}^{1/2} \leq \tilde{h}_n^{-1/2}.
    \end{split}
\end{equation*}
Therefore,
\begin{equation*}
\begin{split}
    \norm{S'(t)} \leq~& \normo{(I + \tilde{h}_n Z_t\matT  Z_t )^{-1}} \norm{(Z_t\matT )'} \normo{(I + \tilde{h}_n Z_t Z_t\matT  )^{-1}} \\
    &+ \tilde{h}_n \normo{Z_t\matT  (I + \tilde{h}_n Z_t Z_t\matT  )^{-1}}  \normo{ Z_t'} \normo{ Z_t\matT  (I + \tilde{h}_n Z_t Z_t\matT  )^{-1}} \\
    \leq~& \normo{Z_t'} + \tilde{h}_n (\tilde{h}_n^{-1/2})^2 \norm{Z_t'} = 2 \normo{Y_n-\hat{Y}_n}.
\end{split}
\end{equation*}
By definition,
\begin{equation}
\label{eq:pfYdiff}
    \begin{split}
        Y_n- \hat{Y}_n =~& \frac{1}{\sqrt{J-1}} [ \tilde{\mfr}_n^{(1)} - G_n \tilde{\tau}_n^{(1)} , \dots, \tilde{\mfr}_n^{(J)} - G_n \tilde{\tau}_n^{(J)} ] \\
        =~& \frac{1}{\sqrt{J-1}} [ r_n^{(1)} , \dots, r_n^{(J)} ] \Brac{I - \frac{1}{J}\ones\ones\matT },
    \end{split}
\end{equation}
where $\ones = (1,1,\dots,1)\matT $ and
\[
    r_n^{(j)} = \mcH( \tilde{u}_n^{(j)} ) - \mcH( \mean{u}_n ) - G_n \tilde{\tau}_n^{(j)} = \begin{bmatrix}
         \mcG( \tilde{u}_n^{(j)} ) - \mcG( \mean{u}_n ) -  \nabla \mcG( \mean{u}_n ) \tilde{\tau}_n^{(j)} \\
          0
    \end{bmatrix}.
\]
Using the bounds $\norm{\nabla^2\mcG} \leq H$ in \cref{eq:HessBound}, we obtain that
\[
    \normo{r_n^{(j)}} \leq \frac{1}{2} H \normo{\tilde{\tau}_n^{(j)}}^2.
\]
Notice $I - \frac{1}{J}\ones\ones\matT $ is an orthogonal projector and thus from \cref{eq:pfYdiff},
\begin{equation*}
    \begin{split}
        \normo{Y_n- \hat{Y}_n}^2 \leq~& \frac{1}{J-1} \normo{[ r_n^{(1)} , \dots, r_n^{(J)} ] }^2 \\
        \leq~& \frac{1}{J-1} \sum_{j=1}^J \normo{r_n^{(j)}}^2 \leq \frac{H^2}{4(J-1)} \sum_{j=1}^J \normo{\tilde{\tau}_n^{(j)}}^4.
    \end{split}
\end{equation*}
Therefore,
\begin{equation*}
\label{eq:pfYBound}
    \begin{split}
        \normo{Y_n- \hat{Y}_n} \leq~& \frac{H}{2\sqrt{J-1}} \Brac{\sum_{j=1}^J \normo{\tilde{\tau}_n^{(j)}}^4}^{1/2} \\
        \leq~& \frac{H}{2\sqrt{J-1}} \sum_{j=1}^J \normo{\tilde{\tau}_n^{(j)}}^2 \\
        =~& \frac{H}{2} \sqrt{J-1} \tr(\tilde{C}_n^{uu}) \leq \frac{1}{2} (J-1)^{3/2} H \normo{\tilde{C}_n^{uu}}.
    \end{split}
\end{equation*}
The last inequality holds since $\rk(\tilde{C}_n^{uu}) \leq J-1$. Combining the above results into \cref{eq:pfnonlineardiff} and noticing $\normo{\tilde{C}_n^{uu}} = \normo{T_nT_n\matT } = \normo{T_n}^2$, we obtain
\begin{equation*}
    \begin{split}
        \normo{\mean{u}_{n+1} - \mean{u}_{n+1}' } \leq~& \tilde{h}_n \normo{T_n} \normo{ z - \mcH(\mean{u}_{n}) } \int_0^1 \normo{ S'(t) } \mdd t  \\
        \leq~& \tilde{h}_n \normo{\tilde{C}_n^{uu}}^{1/2} \normo{ z - \mcH(\mean{u}_{n}) } \cdot (J-1)^{3/2} H \normo{\tilde{C}_n^{uu}}.
    \end{split}
\end{equation*}
Thus the conclusion holds for $C = (J-1)^{3/2} H $.

\subsection{Proof for Lemma \ref{lem:onestep}}
\label{app:onestep}
Since $l$ is $L$-smooth, 
\begin{equation}
\label{eq:pfConv}
    \begin{split}
        l(\mean{u}_{n+1}) \leq~& l(\mean{u}_n) + \ip{\nabla l(\mean{u}_n)}{\mean{u}_{n+1}-\mean{u}_{n}} + \frac{1}{2}L \norm{\mean{u}_{n+1}-\mean{u}_{n}}^2 \\
        =~& l(\mean{u}_n) + \ip{\nabla l(\mean{u}_n)}{\mean{u}'_{n+1}-\mean{u}_{n}} + \frac{1}{2}L \norm{\mean{u}_{n+1}'-\mean{u}_{n}}^2  \\
        +& \ip{\nabla l(\mean{u}_n) + L ( \mean{u}_{n+1}' - \mean{u}_{n}) }{\mean{u}_{n+1}-\mean{u}'_{n+1}} + \frac{1}{2}L \normo{\mean{u}_{n+1}-\mean{u}'_{n+1}}^2.
    \end{split}
\end{equation}
where $\mean{u}_{n+1}'$ is defined in \cref{eq:pfGN}. The second term provides contraction, and the rest terms are the residues to be controlled. Note though the third term is a higher order term $(\mcO(\tilde{h}_n^2))$, it is not small due to the adaptive step size. Thus we will combine the second and third terms together to derive sufficient decay. And the last two terms is the nonlinear residue, which will be controlled using \cref{prop:LinearErr}.

By definition, we can write the second term in \cref{eq:pfConv} as
\begin{equation}
\label{eq:pf2ndterm}
\begin{split}
    & \ip{\nabla l(\mean{u}_n)}{\mean{u}_{n+1}' - \mean{u}_n} \\
    =& - \ip{G_n\matT  ( z - \mcH(\mean{u}_{n}) ) }{\tilde{h}_n \tilde{C}_{n}^{uu} G_n\matT  ( I + \tilde{h}_n G_n \tilde{C}_{n}^{uu} G_n\matT  )^{-1} ( z - \mcH(\mean{u}_{n}) )} \\
    =& -\ip{z - \mcH(\mean{u}_{n})}{\psi_1(\tilde{h}_n G_n \tilde{C}_{n}^{uu} G_n\matT )(z - \mcH(\mean{u}_{n}))},
\end{split}
\end{equation}
where $\psi_1(x) = x(1+x)^{-1}$. For the third term in \cref{eq:pfConv}, 
\begin{equation}
\label{eq:pf3rdterm}
    \begin{split}
        &\frac{1}{2}L \norm{\mean{u}_{n+1}'-\mean{u}_{n}}^2 \\
        =~& \frac{1}{2} L \normo{ \tilde{h}_n \tilde{C}_{n}^{uu} G_n\matT  ( I + \tilde{h}_n G_n \tilde{C}_{n}^{uu} G_n\matT  )^{-1} ( z - \mcH(\mean{u}_{n}) )}^2 \\
        \leq~& \frac{1}{2} L \tilde{h}_n \normo{\tilde{C}_n^{uu}} \normo{(\tilde{h}_n \tilde{C}_{n}^{uu})^{1/2} G_n\matT  ( I + \tilde{h}_n G_n \tilde{C}_{n}^{uu} G_n\matT  )^{-1} ( z - \mcH(\mean{u}_{n}) )}^2 \\
        =~& \frac{1}{2} L \tilde{h}_n \normo{\tilde{C}_n^{uu}} \ip{ z - \mcH(\mean{u}_{n}) }{\psi_2(\tilde{h}_n G_n \tilde{C}_{n}^{uu} G_n\matT  ) ( z - \mcH(\mean{u}_{n}) )},
    \end{split}
\end{equation}
where $\psi_2(x) = x(1+x)^{-2}$. Notice $\tilde{C}_n^{uu}$ is a submatrix of $C_n^{uu} \St \normo{\tilde{C}_n^{uu}} \leq \normo{C_n^{uu}}$. By assumption $\mu\leq L^{-1}$, and thus the choice $\tilde{h}_n = \mu \normo{C_n^{uu}}^{-1}$ implies that
\[
\frac{1}{2} L \tilde{h}_n \normo{\tilde{C}_n^{uu}} \leq \frac{1}{2} L \mu \leq \frac{1}{2}.
\]
Therefore, combine \cref{eq:pf2ndterm} and \cref{eq:pf3rdterm} and we obtain
\begin{equation}
\label{eq:pfdecaytermbound}
    \begin{split}
        & \ip{\nabla l(\mean{u}_n)}{\mean{u}_{n+1}' - \mean{u}_n} + \frac{1}{2}L \norm{\mean{u}_{n+1}'-\mean{u}_{n}}^2  \\
        \leq& - \ip{z - \mcH(\mean{u}_{n})}{ \psi(\tilde{h}_n G_n \tilde{C}_{n}^{uu} G_n\matT )( z - \mcH(\mean{u}_{n}) )},
    \end{split}
\end{equation}
where 
\[
    \psi(x) = \psi_1(x) - \frac{1}{2} \psi_2(x) = \frac{x}{1+x} - \frac{x}{2(1+x)^2} = \frac{ x(1+ 2x)}{2(1+ x)^2},
\]
Here we use the fact that the positive definite matrix $ B := \tilde{h}_n G_n \tilde{C}_{n}^{uu} G_n\matT $ is diagonalizable: $ B = V_B \Lambda_B V_B\matT$, so that polynomial functions of $B$ can be computed as if $B$ is a scalar, i.e. 
\[
    B(B+I)^{-1} - \frac{1}{2} B (B+I)^{-2} = V_B \Rectbrac{ \Lambda_B (\Lambda_B+I)^{-1} - \frac{1}{2} \Lambda_B (\Lambda_B+I)^{-2} } V_B\matT. 
\]
Notice the upper bound holds
\[
    \normo{\tilde{h}_n G_n \tilde{C}_{n}^{uu} G_n\matT } \leq \tilde{h}_n M^2 \normo{\tilde{C}_{n}^{uu}} \leq \mu M^2.
\]
It is elementary to show that $\psi(x)/x$ is monotone decreasing, and thus when $0< x \leq \mu M^2$, it holds $\psi (x) \geq \mu^{-1} M^{-2} \psi(\mu M^2) x $. Therefore, 
\begin{align}
    &\mE_n \psi(\tilde{h}_n G_n \tilde{C}_{n}^{uu} G_n\matT ) \notag \\ 
    \succeq~& \mu^{-1} M^{-2} \psi(\mu M^2) \cdot \mE_n \tilde{h}_n  G_n \tilde{C}_{n}^{uu} G_n\matT  &\text{(\cref{lem:PSDcmp})} \notag\\
    \succeq~& \mu^{-1} M^{-2} \psi(\mu M^2) \cdot \lambda(1-\lambda) \tilde{h}_n  \min_s C_n^{uu}(s,s) G_n G_n \matT  &\text{by \cref{eq:ExpC}} \notag \\
    \succeq~& \mu^{-1} M^{-2} \psi(\mu M^2) \cdot \lambda(1-\lambda) \tilde{h}_n \bar{\kappa}^{-1} \normo{C_n^{uu}} \min_s P(s,s) G_n G_n \matT  &\text{(\cref{prop:ensemble_collapse_diag})} \notag \\
    =~& M^{-2} \psi(\mu M^2) \lambda(1-\lambda) \bar{\kappa}^{-1} \min_s P(s,s) G_n G_n \matT  \notag.
\end{align}
Plug it in \cref{eq:pfdecaytermbound} and use the PL condition $\normo{\nabla l(u)}^2 \geq c(l(u)-l_{\min})$, we obtain
\begin{equation}
\label{eq:pfdecay}
\begin{split}
    & \mE_n \Rectbrac{ \ip{\nabla l(\mean{u}_n)}{\mean{u}_{n+1}' - \mean{u}_n} + \frac{1}{2}L \norm{\mean{u}_{n+1}'-\mean{u}_{n}}^2 }  \\
    \leq& - \mE_n \ip{z - \mcH(\mean{u}_{n})}{ \psi(\tilde{h}_n G_n \tilde{C}_{n}^{uu} G_n\matT )( z - \mcH(\mean{u}_{n}) )}  \\
    \leq& -M^{-2} \psi(\mu M^2) \lambda(1-\lambda) \bar{\kappa}^{-1} \min_s P(s,s) \normo{G_n\matT (z - \mcH(\mean{u}_{n}))}^2 \\
    \leq& - 2 \beta_0 (l(\bar{u}_n) - l_{\min}),
\end{split}
\end{equation}
where we denote $\beta_0 = \frac{1}{2} c M^{-2} \psi(\mu M^2) \lambda(1-\lambda) \bar{\kappa}^{-1} \min_s P(s,s)$.

Next we control the nonlinear residues. For the fourth term in \cref{eq:pfConv}, first notice
\begin{equation}
\label{eq:pfincrebound}
    \begin{split}
        \normo{\mean{u}_{n+1}' - \mean{u}_{n} } \leq~& \normo{\tilde{h}_n \tilde{C}_n^{uu} G_n\matT  (I+\tilde{h}_n G_n \tilde{C}_n^{uu}G_n\matT )^{-1}} \normo{z - \mcH(\mean{u}_{n})} \\
        \leq~& \mu^{1/2} \normo{(\tilde{h}_n \tilde{C}_n^{uu})^{1/2} G_n\matT  (I+\tilde{h}_n G_n \tilde{C}_n^{uu}G_n\matT )^{-1}} \normo{z - \mcH(\mean{u}_{n})} \\
        \leq~& \mu^{1/2} \normo{z - \mcH(\mean{u}_{n})},
    \end{split}
\end{equation}
where we uses the following inequality: since $G_n \tilde{C}_n^{uu}G_n\matT \succeq 0$,
\begin{equation*}
\begin{split}
    & \normo{(\tilde{h}_n \tilde{C}_n^{uu})^{1/2} G_n\matT  (I+\tilde{h}_n G_n \tilde{C}_n^{uu}G_n\matT )^{-1}}^2 \\
    =~&  \normo{ (I+\tilde{h}_n G_n \tilde{C}_n^{uu}G_n\matT )^{-1} \tilde{h}_n  G_n \tilde{C}_n^{uu} G_n\matT  (I+\tilde{h}_n G_n \tilde{C}_n^{uu}G_n\matT )^{-1}} \\
    \leq~& \normo{ (I+\tilde{h}_n G_n \tilde{C}_n^{uu}G_n\matT )^{-1} } \normo{ \tilde{h}_n  G_n \tilde{C}_n^{uu} G_n\matT  (I+\tilde{h}_n G_n \tilde{C}_n^{uu}G_n\matT )^{-1}} \leq 1.
\end{split}
\end{equation*}
Using \cref{eq:pfincrebound}, \cref{eq:LinearErr} and $\tilde{h}_n \leq \mu \normo{\tilde{C}_n^{uu}}^{-1} $, we get 
\begin{equation*}
\label{eq:pfres1}
    \begin{split}
        &\ip{\nabla l(\mean{u}_n) + L ( \mean{u}_{n+1}' - \mean{u}_{n}) }{\mean{u}_{n+1}-\mean{u}'_{n+1}} \\
        \leq~& \Brac{\normo{G\matT _n(z - \mcH(\mean{u}_{n}))} + L \normo{ \mean{u}_{n+1}' - \mean{u}_{n} } } \normo{\mean{u}_{n+1}-\mean{u}'_{n+1}}  \\
        \leq~& C ( M + L \mu^{1/2} ) \tilde{h}_n \normo{\tilde{C}_n^{uu}}^{3/2} \normo{ z - \mcH(\mean{u}_{n}) }^2 \\
        \leq~& C ( M + L \mu^{1/2} ) \mu \normo{\tilde{C}_n^{uu}}^{1/2} \normo{ z - \mcH(\mean{u}_{n}) }^2.
    \end{split}
\end{equation*}
The last term can be controlled in a similar way:
\begin{equation*}
\label{eq:pfres2}
\begin{split}
    \frac{1}{2}L \normo{\mean{u}_{n+1}-\mean{u}'_{n+1}}^2 \leq~& \frac{1}{2} L C^2 \tilde{h}_n^2 \normo{\tilde{C}_n^{uu}}^{3} \normo{ z - \mcH(\mean{u}_{n}) }^2 \\
    \leq~& \frac{1}{2} L C^2 \mu^2 \normo{\tilde{C}_n^{uu}}  \normo{ z - \mcH(\mean{u}_{n}) }^2.
\end{split}
\end{equation*}
Summing up the above two inequalities, we obtain
\begin{equation}
\label{eq:pfres}
\begin{split}
    \ip{\nabla l(\mean{u}_n) + L ( \mean{u}_{n+1}' - \mean{u}_{n}) }{\mean{u}_{n+1}-\mean{u}'_{n+1}} +  \frac{1}{2}L \normo{\mean{u}_{n+1}-\mean{u}'_{n+1}}^2& \\
    \leq \frac{1}{2} \Delta_n \normo{ z - \mcH(\mean{u}_{n}) }^2 = \Delta_n l(\mean{u}_n) &,
\end{split}
\end{equation}
where $\Delta_n = 2C \mu ( M + L \mu^{1/2} ) \normo{C_n^{uu}}^{1/2} + L C^2 \mu^2 \normo{C_n^{uu}}$. Note here we use the relation $\normo{\tilde{C}_n^{uu}} \leq \normo{C_n^{uu}}$.

Combine \cref{eq:pfConv}, \cref{eq:pfdecay} and \cref{eq:pfres}, and we prove our claim.

\subsection{Proof for \cref{lem:scaling}}   \label{app:scaling}

When $\{u_0^{(j)}\}_{j=1}^J$ is initialized with i.i.d. Gaussian variables, in the regime $J\ll d_u$, it holds that $r=\rk(C_0^{uu}) \approx J-1$, so that 
\[
    \tr(C_0^{uu}) = \mcO(d_u) ~\St~ \normo{C_0^{uu}} = \mcO(d_u/J).
\]
Similarly, the projection $P$ onto the initial subspace (see \cref{prop:ensemble_collapse_diag}) satisfies
\[
    \tr(P) = r = \mcO(J) ~\St~ \min_s P(s,s) = \mcO(J/d_u).
\]
From the estimation above and the definition \cref{eq:beta0}, we have 
\[
    \beta_0 = c \lambda(1-\lambda) \bar{\kappa}^{-1} \min_s P(s,s) \cdot \frac{\mu(1+ 2\mu M^2)}{4(1+\mu M^2)^2} = \mcO(J/d_u),
\]
since $\min_s P(s,s) = \mcO(J/d_u)$, and by assumption, $\mean{\kappa} = \mcO(1)$ and
\[
    c = \mcO(M^2) ~\St~ \frac{c \mu(1+ 2\mu M^2)}{4(1+\mu M^2)^2} \sim \frac{\mu M^2 (1+ 2\mu M^2)}{4(1+\mu M^2)^2} = \mcO(1).
\]
Since we take the step size $\theta \sim M^{-2}$, so that $\gamma^2 \theta \sim \gamma^2/M^2 \sim 1$, which implies
\[
    \frac{ \gamma^2 \theta }{2 ( 1 + \gamma^2 \theta ) } \sim 1 ~\St~ \beta = \min\{ \beta_0, \frac{ \gamma^2 \theta }{2 ( 1 + \gamma^2 \theta ) } \} = \beta_0 = \mcO(J/d_u).
\]
As for $n_0$, note 
\[
    n_0 = \Big\lceil \frac{\log (\beta^{-1}C_1) }{\log \delta^{-1}} \Big\rceil = \Big\lceil \frac{\log \Rectbrac{ \beta^{-1} \Brac{ 2 C \mu (M + L \mu^{1/2} ) \normo{C_0^{uu}}^{1/2} + L C^2 \mu^2 \normo{C_0^{uu}} } } }{\log (1+ \gamma^2 \theta)}\Big\rceil. 
\] 
The numerator is $\log \text{poly} (d_u) = \mcO(\log d_u)$ as all these constants depend on $d_u$ at most polynomially. The denominator $\log(1+\gamma^2 \theta) \sim \gamma^2 \theta \sim 1$. It then follows that $n_0 = \mcO(\log d_u)$. 

\subsection{Proof for Proposition \ref{prop:complexity}}
\label{app:complexity}
We will prove the lower bound by construction. For simplicity, normalize $\norm{y}=1$. We start from some arbitrary $\norm{G'}\leq 1$, and consider the matrix class
\[
P = \{ \norm{G}\leq 2 : GU = G'U \}.
\]
Note $\forall G \in P, A(U,GU,y) \equiv A(U,G'U,y)$. We will construct $G_0,G_B\in P$ s.t.
\begin{equation}
\label{eq:pfOptSoluDist}
    \normo{u^*(G_0) - u^*(G_B)} \geq 0.2.    
\end{equation}
Then at least one $G_*\in\{G_0,G_B\}$ would satisfy $\|A(U,G_*U,y)-u^*(G_*)\|\geq 0.1$, since
\begin{equation*}
    \begin{split}
        & \|A(U,G_0U,y)-u^*(G_0)\| +  \|A(U,G_BU,y)-u^*(G_B)\| \\
        \geq~& \| \Brac{A(U,G_0U,y)-u^*(G_0)} - \Brac{ A(U,G_BU,y)-u^*(G_B)} \| \\
        =~& \normo{u^*(G_0) - u^*(G_B)} \geq 0.2.
    \end{split}
\end{equation*}
And the conclusion follows.

The proof is divided into two parts. First we express $P$ and $u^*(G)$ in an explicit form. Then we construct $G_0,G_B \in P$ that satisfy \cref{eq:pfOptSoluDist}. \\
\\
{\bf Explicit form of $P$ and $u^*(G)$.}
$U$ admits a singular value decomposition
\[
    U = \Sigma 
    \begin{pmatrix}
        \Lambda & 0 \\
        0 & 0
    \end{pmatrix} 
    V
    = \Sigma 
    \begin{pmatrix}
        I_r & 0 \\
        0 & 0
    \end{pmatrix}
    \begin{pmatrix}
        \Lambda & 0 \\
        0 & I
    \end{pmatrix}
    V,
\]
where $r$ is the rank of $U$, and notice $r\leq n \leq d_u/2$. For any $G\in P$, denote  
\[
G\Sigma = (A,B), ~G'\Sigma = (C,D), \quad A,C \in \mR^{d_y\times r}, ~ B,D \in \mR^{d_y\times (d_u-r)}.
\]
Denote $R = \begin{pmatrix} \Lambda & 0 \\ 0 & I \end{pmatrix} V$, and notice 
\begin{equation*}
    \begin{split}
        GU = G'U \ioi~& G\Sigma \Sigma\matT  U R^{-1} = G'\Sigma \Sigma\matT  U R^{-1} \\
        \ioi~& (A,B) 
        \begin{pmatrix}
            I_r & 0 \\
            0 & 0
        \end{pmatrix}
        = (C,D) 
        \begin{pmatrix}
            I_r & 0 \\
            0 & 0
        \end{pmatrix}
        \ioi A = C.
    \end{split}
\end{equation*}
So that $G = (C,B)\Sigma\matT $. Then we can write
\[
    P = \{ (C,B) \Sigma\matT  : B \in \mR^{d_y\times(d_u-r)}, ~\norm{(C,B)} \leq 2 \},
\]
By direct computation, for $G = (C,B) \Sigma\matT  \in P$, it holds
\begin{equation}
\label{eq:pfOptSoluForm}
    \begin{split}
        u^*(G) =~& \argmin_u \frac{1}{2} \norm{Gu-y}^2 + \frac{1}{2} \norm{u}^2 = (G\matT G+I)^{-1} G\matT  y \\
        =~& \Sigma \Rectbrac{(C,B)\matT (C,B)+I}^{-1} (C,B)\matT  y.
    \end{split}
\end{equation}
{\bf Construction of $G_0$ and $G_B$.}
Consider the two matrices 
\[
    G_0 = (C,0)\Sigma\matT , \quad G_B = (C,B)\Sigma\matT ,
\]
where $B$ is to be determined later. By \cref{eq:pfOptSoluForm},
\[
    u^*(G_0) = \Sigma z_0, \quad z_0 = 
    \begin{bmatrix}
        (C\matT  C +I)^{-1} C\matT  y \\
        0 
    \end{bmatrix}.
\]
\[
    u^*(G_B) = \Sigma z_C, \quad z_C = 
    \begin{pmatrix}
        C\matT  C + I & C\matT B \\
        B\matT  C & B\matT B + I
    \end{pmatrix}^{-1} 
    \begin{pmatrix}
        C\matT  y \\
        B\matT  y
    \end{pmatrix}.
\]
Notice 
\[
    \normo{u^*(G_0) - u^*(G_B)} = \norm{z_0-z_C} \geq \norm{z_{C,2}},
\]
where $z_{C,2}$ is the second block of $z_C$, which can be computed directly by
\[
    z_{C,2} = \Brac{I + B\matT  \Brac{I+C C\matT }^{-1} B}^{-1} B\matT  \Brac{I+C C\matT }^{-1} y .
\]
Denote $C C\matT  = Q\Lambda Q\matT $ as the eigenvalue decomposition, and denote $\tilde{B} = Q\matT B$, then
\[
    z_{C,2} = \Brac{I + \tilde{B}\matT  \Brac{I+\Lambda^2}^{-1} \tilde{B}}^{-1} \tilde{B}\matT  \Brac{I+\Lambda^2}^{-1} Q\matT y .
\]
Denote $r' = r\wedge d_y$, and note $\Lambda \in \mR^{d_y\times d_y}$ must be of the form
\[
    \Lambda = 
    \begin{pmatrix}
        \Lambda_{r'} & 0 \\
        0 & 0
    \end{pmatrix}, \quad \Lambda_{r'} \in \mR^{r'\times r'}.
\]
We will choose $\tilde{B} \in \mR^{d_y\times (d_u-r)} $ of the form
\[
    \tilde{B} = 
    \begin{pmatrix}
        \Lambda' & 0 \\
        E & 0
    \end{pmatrix},\quad \Lambda' \in \mR^{r'\times r'},~ E\in\mR^{(d_y-r')\times r'},
\]
where $\Lambda'$ is diagonal. Note the decomposition is possible since $d_u - r \geq r \geq r'$. Also notice when $r\geq d_y$, $\Lambda,\tilde{B}$ only have the upper left block. This does not affect our deduction, since we will set $E=0$ in this case and the computation is still correct.

Accordingly decompose $Q\matT y = (z_1,z_2)$ where $z_1 \in\mR^{r'}, z_2 \in\mR^{d_y-r'}$. Notice $1 = \normo{Q\matT y}^2 = \norm{z_1}^2 + \norm{z_2}^2$. By direct cputation, $z_{C,2} = (z', 0)$, where
\[
    z' = \Brac{ I + \Lambda'(I+\Lambda_{r'}^2)^{-1} \Lambda' + E\matT  E }^{-1} \Brac{ \Lambda'(I+\Lambda_{r'}^2)^{-1} z_1 + E\matT  z_2 }.
\]
When $\norm{z_2}^2\geq 1/2$, we can take $\Lambda' =0, E = \norm{z_2}^{-1} z_2 e_1\matT $ where $e_1=(1,0,\cdots,0)$. (note it is possible since $\norm{z_2}>0$ implies $r'<d_y$) and then
\[
    z' = \Brac{I+E\matT E}^{-1} E\matT  z_2 = \frac{1}{ 2 } \norm{z_2} e_1 \St \norm{z'} = \frac{1}{ 2 } \norm{z_2} \geq \frac{1}{2\sqrt{2}}.
\]
Otherwise, $\norm{z_1}^2 = 1 - \norm{z_2}^2 \geq 1/2$, we can take $E=0$, $\Lambda' = I$, and then
\[
    z' = \Brac{ I + ( I + \Lambda_{r'}^2 )^{-1} }^{-1} (I+\Lambda_{r'}^2)^{-1} z_1 = (2I + \Lambda_{r'}^2)^{-1} z_1.
\]
\[
    \St \norm{z'} \geq  \frac{\norm{z_1}}{ 2 + \norm{\Lambda_{r'}}^2 } \geq \frac{1}{3\sqrt{2}}, \quad \text{since } \norm{\Lambda_{r'}} = \normo{CC\matT } \leq \normo{G'}^2 \leq 1.
\]
So that we prove in any case,
\[
    \norm{u^*(G_0) - u^*(G_B)} \geq \norm{z_{C,2}} = \norm{z'} \geq \frac{1}{3\sqrt{2}} > \frac{1}{5}.
\]
The last thing to verify is that $G_B$ satisfies the norm constraint $\norm{G_B}\leq2$. Notice
\[
\norm{G_B}^2 = \norm{(C,B)}^2 = \norm{(C,B)(C,B)\matT } = \norm{CC\matT  + BB\matT  } \leq \norm{G'}^2 + \normo{\tilde{B}\tilde{B}\matT }.
\]
In the above cases, $\normo{\tilde{B}} = \norm{E} = \norm{z_2}^{-1} \norm{z_2} \norm{e_1} = 1$ or $\normo{\tilde{B}} = \norm{\Lambda'} = \norm{I} = 1$. So that $\norm{G_B}^2 \leq 2 < 4$. This completes our proof.

\subsection{Stability}
\begin{lemma}
\label{lem:meanstab}
DEKI with step size $\tilde{h}_n = \mu \normo{C_n^{uu}}^{-1}$ is stable in the sense that
\begin{equation}
    \normo{z-\mcH(\mean{u}_{n+1})} \leq ( 1 + M \sqrt{\mu/2} ) \normo{z-\mcH(\mean{u}_n)}.
\end{equation}
\end{lemma}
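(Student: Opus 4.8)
The plan is to track the data-space residual $r_n := z - \mcH(\mean{u}_n)$ and show it can grow by at most the factor $1 + M\sqrt{\mu/2}$ in a single step. Writing $d_n := \mean{u}_{n+1} - \mean{u}_n$ for the mean increment, I would start from the identity $z - \mcH(\mean{u}_{n+1}) = r_n - \bigl(\mcH(\mean{u}_{n+1}) - \mcH(\mean{u}_n)\bigr)$ and apply the triangle inequality to get
\[
\normo{z - \mcH(\mean{u}_{n+1})} \leq \normo{r_n} + \normo{\mcH(\mean{u}_{n+1}) - \mcH(\mean{u}_n)}.
\]
It then suffices to prove $\normo{\mcH(\mean{u}_{n+1}) - \mcH(\mean{u}_n)} \leq M\sqrt{\mu/2}\,\normo{r_n}$, which I would split into (i) a bound on the step length $\normo{d_n}$ in parameter space and (ii) a Jacobian bound converting that into a change in data space.

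For step (i), I would reuse the notation of the proof of \cref{prop:LinearErr}: with $T_n = (J-1)^{-1/2}[\tilde{\tau}_n^{(1)},\dots,\tilde{\tau}_n^{(J)}]$ and $Y_n = (J-1)^{-1/2}[\tilde{\mfr}_n^{(1)},\dots,\tilde{\mfr}_n^{(J)}]$ one has $\tilde{C}_n^{uu} = T_nT_n\matT$, $\tilde{C}_n^{uz} = T_nY_n\matT$ and $\tilde{C}_n^{zz} = Y_nY_n\matT$, so the mean update \cref{eq:DEKImean} reads $d_n = \tilde{h}_n T_n Y_n\matT (I + \tilde{h}_n Y_n Y_n\matT)^{-1} r_n$. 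Applying the push-through identity (the variant of \cref{lem:SMW}) $Y_n\matT(I + \tilde{h}_n Y_n Y_n\matT)^{-1} = (I + \tilde{h}_n Y_n\matT Y_n)^{-1} Y_n\matT$ and substituting $w = \sqrt{\tilde{h}_n}\,\sigma$ in the scalar maximization $\max_{\sigma\geq 0}\frac{\sqrt{\tilde{h}_n}\,\sigma}{1 + \tilde{h}_n \sigma^2} = \max_{w\ge0}\frac{w}{1+w^2} = \tfrac12$, I obtain $\normo{\sqrt{\tilde{h}_n}(I + \tilde{h}_n Y_n\matT Y_n)^{-1}Y_n\matT} \le \tfrac12$. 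Combining this with $\normo{T_n} = \normo{\tilde{C}_n^{uu}}^{1/2} \le \normo{C_n^{uu}}^{1/2}$ (the fact $\normo{\tilde{C}_n^{uu}} \le \normo{C_n^{uu}}$ is already used in \cref{app:onestep}) and the step-size choice $\tilde{h}_n = \mu\normo{C_n^{uu}}^{-1}$ gives
\[
\normo{d_n} \le \tfrac12 \sqrt{\tilde{h}_n}\,\normo{T_n}\,\normo{r_n} \le \tfrac12\sqrt{\mu\normo{C_n^{uu}}^{-1}\normo{C_n^{uu}}}\,\normo{r_n} = \tfrac{\sqrt{\mu}}{2}\,\normo{r_n}.
\]

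For step (ii), by the mean value theorem $\mcH(\mean{u}_{n+1}) - \mcH(\mean{u}_n) = \bigl(\int_0^1 \nabla\mcH(\mean{u}_n + t d_n)\,\mdd t\bigr) d_n$, so that $\normo{\mcH(\mean{u}_{n+1}) - \mcH(\mean{u}_n)} \le M \normo{d_n}$ once the Jacobian of $\mcH$ is bounded in operator norm by $M$ along the segment. Combining the two steps yields $\normo{\mcH(\mean{u}_{n+1}) - \mcH(\mean{u}_n)} \le \tfrac{M\sqrt{\mu}}{2}\normo{r_n} \le M\sqrt{\mu/2}\,\normo{r_n}$ (using $\tfrac12 \le \tfrac{1}{\sqrt2}$), which closes the argument. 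The main obstacle is precisely step (ii): the standing assumption \cref{eq:LineHBound} bounds the \emph{linearized} map $H_n$ by $M$, not the true Jacobian $\nabla\mcH$ on $[\mean{u}_n,\mean{u}_{n+1}]$. This is immediate in the linear case $\mcH(u)=Hu$ (where $\nabla\mcH\equiv H$ and $\normo{H}\le M$); in the nonlinear case one either invokes the bound $\normo{\nabla\mcG}\le M_G$ implicit in the no-truncation regime of \cref{eq:linearize} — so that $\normo{\nabla\mcH}^2 = \normo{\nabla\mcG}^2 + \normo{\mcC_0^{-1}} \le M^2$ — or carries an additional $O(H\normo{d_n}^2)$ term from the Hessian bound \cref{eq:HessBound}, which is higher order in $\normo{r_n}$ and harmless for the crude one-step stability bound that feeds into $l(\mean{u}_{n_0}) \le C' l(\mean{u}_0)$.
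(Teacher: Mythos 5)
Your proposal is correct and follows essentially the same route as the paper's proof: decompose via the triangle inequality, bound the mean increment by $\mathcal{O}(\sqrt{\mu})\,\normo{z-\mcH(\mean{u}_n)}$ using $\normo{T_n}^2=\normo{\tilde{C}_n^{uu}}\le\normo{C_n^{uu}}$ together with an operator-norm bound on $Y_n\matT(I+\tilde{h}_nY_nY_n\matT)^{-1}$ (your singular-value computation gives the slightly sharper constant $\sqrt{\mu}/2$ where the paper settles for $\sqrt{\mu/2}$), and then convert to data space via a Lipschitz bound on $\mcH$. Your flagged obstacle in step (ii) is a fair observation about the original argument as well: the paper simply asserts ``since $\mcH$ is $M$-Lipschitz,'' a property not listed in \cref{asm:main}, so your discussion of how to supply it (linear case, $\normo{\nabla\mcG}\le M_G$, or an extra higher-order Hessian term) addresses a gap the paper leaves implicit rather than one you introduced.
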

\begin{proof}
From \cref{eq:pfumeanup}, we obtain
\begin{equation*}
\begin{split}
    \normo{\mean{u}_{n+1} - \mean{u}_n} \leq~& \tilde{h}_n \normo{ T_n Y_n\matT  ( I + \tilde{h}_n Y_n Y_n\matT  )^{-1} ( z - \mcH(\mean{u}_{n}) )} \\
    \leq~& \tilde{h}_n \normo{ T_n Y_n\matT  ( I + \tilde{h}_n Y_n Y_n\matT  )^{-1}} \normo{ z - \mcH(\mean{u}_{n}) }.
\end{split}
\end{equation*}
Notice $\normo{T_n\matT  T_n} = \norm{T_nT_n\matT } = \normo{\tilde{C}_n^{uu}} $, and thus
\begin{equation*}
\begin{split}
    &\normo{ T_n Y_n\matT  ( I + \tilde{h}_n Y_n Y_n\matT  )^{-1}}^2 \\
    =~& \normo{ ( I + \tilde{h}_n Y_n Y_n\matT  )^{-1} Y_n T_n\matT  T_n Y_n\matT  ( I + \tilde{h}_n Y_n Y_n\matT  )^{-1}} \\
    \leq~&  \normo{ C_n^{uu} } \normo{ ( I + \tilde{h}_n Y_n Y_n\matT  )^{-1} Y_n Y_n\matT  ( I + \tilde{h}_n Y_n Y_n\matT  )^{-1}}.
\end{split}
\end{equation*}
Using \cref{lem:PSDcmp} (3) with $\psi(x) = x(1+x)^{-2} \leq 1/2$, we obtain
\[
    \tilde{h}_n ( I + \tilde{h}_n Y_n Y_n\matT  )^{-1} Y_n Y_n ( I + \tilde{h}_n Y_n Y_n\matT  )^{-1} \preceq I/2.
\]
Combining the above estimates, we obtain
\begin{equation*}
\begin{split}
    \normo{\mean{u}_{n+1} - \mean{u}_n} \leq~& \sqrt{\frac{\tilde{h}_n \normo{ C_n^{uu} }}{2}} \normo{ z - \mcH(\mean{u}_{n}) } = \sqrt{\frac{ \mu }{2}} \normo{ z - \mcH(\mean{u}_{n}) }.
\end{split}
\end{equation*}
Since $\mcH$ is $M$-Lipchitz,
\begin{equation*}
\begin{split}
    \normo{z-\mcH(\mean{u}_{n+1})} \leq~& \normo{z-\mcH(\mean{u}_{n})} + M \normo{\mean{u}_{n+1} - \mean{u}_n} \\
    \leq~& ( 1 + M \sqrt{\mu/2} ) \normo{z-\mcH(\mean{u}_n)}.
\end{split}
\end{equation*}
\end{proof}

\section{Some matrix analysis results}
Here we list some lemmas on the matrix analysis. More details can be found in textbooks, for instance \cite{MR2978290}.
\begin{lemma}
\label{lem:SMW}
    (Sherman-Morrison-Woodbury) Let $A\in \mR^{n\times n}$ be an invertible matrix. Let $U, V \in \mR^{n\times m}$. If $I+V\matT A^{-1}U$ is nonsingular, then
    \begin{equation}
        (A+UV\matT )^{-1} = A^{-1} - A^{-1} U ( I+V\matT A^{-1}U )^{-1} V\matT  A^{-1}.
    \end{equation}
\end{lemma}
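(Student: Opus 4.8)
The plan is to verify the formula directly rather than to derive it, since the right-hand side is given explicitly: I would set the candidate inverse
\[
X := A^{-1} - A^{-1} U ( I + V\matT A^{-1} U )^{-1} V\matT A^{-1},
\]
and confirm $(A + UV\matT) X = I$. Because $A + UV\matT$ is square of size $n\times n$, producing a one-sided identity suffices to conclude that $X$ is its (two-sided) inverse. Before any algebra I would record that every inverse appearing is well-defined: $A^{-1}$ exists by assumption, $(I + V\matT A^{-1} U)^{-1}$ exists by the nonsingularity hypothesis, and the shapes are consistent because $V\matT A^{-1} U \in \mR^{m\times m}$ while $UV\matT \in \mR^{n\times n}$ matches $A$.

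First I would abbreviate $W := I + V\matT A^{-1} U$ and expand the product by distributing $(A + UV\matT)$ over the two terms of $X$, using $AA^{-1}=I$ for the leading piece. The main computational observation is that $(A + UV\matT) A^{-1} U = U + U V\matT A^{-1} U = U(I + V\matT A^{-1} U) = UW$, which lets the second term of $X$ collapse cleanly:
\[
(A + UV\matT) X = (I + U V\matT A^{-1}) - (UW) W^{-1} V\matT A^{-1} = I + U V\matT A^{-1} - U V\matT A^{-1} = I.
\]
Here the factorization through $UW$ and the cancellation $WW^{-1}=I$ are exactly where the invertibility of $W$ is used.

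Since this is a classical linear-algebra identity, I expect no genuine obstacle. The only point demanding care is dimensional bookkeeping --- keeping straight which inverse lives in $\mR^{n\times n}$ (namely $A^{-1}$) and which in $\mR^{m\times m}$ (namely $W^{-1}$) --- together with the observation that the regrouping $(A+UV\matT)A^{-1}U = UW$ is precisely what reduces the whole verification to a one-line cancellation.
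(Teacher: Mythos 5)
Your verification is correct and complete: the expansion $(A+UV\matT)A^{-1}U = U(I+V\matT A^{-1}U) = UW$ is exactly the right regrouping, the cancellation goes through, and the appeal to the fact that a one-sided inverse of a square matrix is automatically two-sided is legitimate. Note that the paper itself offers no proof of this lemma --- it is listed in the appendix of matrix-analysis facts with a pointer to a standard textbook --- so your direct verification supplies an argument where the paper has none, and it is the standard one.
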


\begin{lemma}
\label{lem:PSDcmp}
(Comparison) Let $A,B\in\mR^{n\times n}$ be two positive semidefinite matrices such that $A\preceq B$, then it holds that
\begin{enumerate}[(1)]
    \item $\lambda_k(A) \leq \lambda_k (B)$, where $\lambda_k(M)$ is the $k$-th largest eigenvalue of a matrix $M$.
    \item $\forall C \in \mR^{m\times n}, CAC\matT  \preceq CBC\matT  $.
    \item $\forall \psi, \varphi ~\st \psi(x) \leq \varphi(x), ~\psi(A) \preceq \varphi(A)$. As a corollary, when $\lambda_k(A) \in [a,b]$ for all $k$ and $\psi(x) \leq \varphi(x)$ when $x\in[a,b]$, then $\psi(A) \preceq \varphi(A)$.
    \item $\forall \psi: \mR \gto \mR$ that is monotone increasing, $\psi(A)\preceq \psi(B)$. Similarly, for monotone decreasing function $\psi$, $\psi(A)\succeq \psi(B)$.
\end{enumerate}
\end{lemma}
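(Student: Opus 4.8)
The plan is to establish the four claims separately and in increasing order of difficulty: (1)--(3) are elementary consequences of the variational and spectral characterizations of symmetric matrices, whereas (4) carries the real content and is where I expect the main obstacle to lie.

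First I would dispose of (1) via the Courant--Fischer min-max principle. Writing
\[
    \lambda_k(A) = \max_{\dim \mathcal{S} = k} \ \min_{\substack{x\in\mathcal{S}\\ \normo{x}=1}} x\matT A x ,
\]
the hypothesis $A\preceq B$ gives $x\matT A x \leq x\matT B x$ for every $x$, so over any fixed $k$-dimensional subspace $\mathcal{S}$ the inner minimum can only increase when $A$ is replaced by $B$; taking the maximum over all such $\mathcal{S}$ yields $\lambda_k(A)\leq\lambda_k(B)$. Claim (2) I would argue directly with quadratic forms: for arbitrary $y\in\mR^m$ set $x = C\matT y\in\mR^n$, so that $y\matT (C A C\matT) y = x\matT A x \leq x\matT B x = y\matT (C B C\matT) y$, and since $y$ is arbitrary this is precisely $C A C\matT \preceq C B C\matT$.

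For (3) I would diagonalize the \emph{single} matrix $A = Q\Lambda Q\matT$ (note that (3) does not involve $B$). By the definition of a matrix function, $\varphi(A)-\psi(A) = Q\,\diag\{\varphi(\Lambda_{ii})-\psi(\Lambda_{ii})\}\,Q\matT$. Since each eigenvalue $\Lambda_{ii}$ lies in the range where $\psi\leq\varphi$ (which is exactly the content of the corollary, with $[a,b]$ containing the spectrum of $A$), every diagonal entry is nonnegative, the middle factor is PSD, and conjugation by the orthogonal $Q$ preserves positivity; hence $\psi(A)\preceq\varphi(A)$.

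The hard part is (4), and I expect it to be the main obstacle because scalar monotonicity of $\psi$ is \emph{not} sufficient for the matrix inequality $\psi(A)\preceq\psi(B)$: the functions for which $A\preceq B$ forces $\psi(A)\preceq\psi(B)$ are exactly the \emph{operator monotone} ones (Löwner's theorem), a proper subclass of the increasing functions (e.g.\ $x\mapsto x^2$ is increasing but not operator monotone). The claim should therefore be read for operator monotone $\psi$, which covers all the functions the paper actually applies it to. For the resolvent-type functions that appear, $\psi(x)=(c+x)^{-1}$ and compositions thereof, I would give a self-contained proof resting on the single fact that inversion reverses order on positive definite matrices: if $0\prec P\preceq R$, then congruence by $R^{-1/2}$ gives $R^{-1/2}PR^{-1/2}\preceq I$, so by (1) its eigenvalues are at most $1$, whence $R^{1/2}P^{-1}R^{1/2}\succeq I$ and $P^{-1}\succeq R^{-1}$. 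Applying this to $P=cI+A$ and $R=cI+B$, which inherit $P\preceq R$ from $A\preceq B$, yields the decreasing case $\psi(A)\succeq\psi(B)$, and the increasing case follows by passing to $-\psi$. For a fully general operator monotone $\psi$ I would invoke its integral representation $\psi(x)=\alpha+\beta x+\int_0^\infty \frac{x}{x+t}\,d\nu(t)$ with $\beta\geq 0$ and a positive measure $\nu$, reduce each building block $x\mapsto \frac{x}{x+t}=1-t(x+t)^{-1}$ to the inversion argument above, and pass the order relation through the positive combination and the limit.
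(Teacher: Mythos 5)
Your treatment of parts (1)--(3) coincides with the paper's own proof: Courant--Fischer for the eigenvalue comparison, the quadratic-form identity $y\matT (CAC\matT )y=(C\matT y)\matT A(C\matT y)$ for (2), and diagonalization of the single matrix $A$ for (3). These are correct and there is nothing to add.

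Your objection to part (4) is the substantive point, and it is well founded: the statement is false for general monotone $\psi$. Concretely, take $\psi(x)=x^2$, $A=\left(\begin{smallmatrix}1&1\\1&1\end{smallmatrix}\right)$ and $B=A+e_1e_1\matT $; then $A\preceq B$ but $B^2-A^2=\left(\begin{smallmatrix}3&1\\1&0\end{smallmatrix}\right)$ has negative determinant, so $A^2\not\preceq B^2$. The paper's one-line justification of (4) --- deduce $\lambda_k(A)\leq\lambda_k(B)$ from (1) and then invoke (2) and scalar monotonicity --- does not go through, because $\psi(A)$ and $\psi(B)$ are formed in two different eigenbases; an ordering of the spectra yields a Loewner ordering only when $A$ and $B$ commute. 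Your repaired statement (restricting to operator monotone $\psi$) and your proof of it --- inversion antitonicity via congruence by $R^{-1/2}$ combined with (1), then the L\"owner integral representation for the general case --- are correct. One caveat on your remark that operator monotonicity ``covers all the functions the paper actually applies it to'': the only invocation of (4) in the paper is in the proof of \cref{lem:Ccmp}, with $\psi(x)=(1+x)^{-2}$, which is \emph{not} operator antitone (the needed implication $P\preceq R\Rightarrow R^{-2}\preceq P^{-2}$ for $P,R\succ 0$ is equivalent to $P\preceq R\Rightarrow P^2\preceq R^2$, i.e.\ exactly the squaring failure above). So your corrected lemma does not license that step, and the sandwich in \cref{lem:Ccmp} requires an independent argument rather than an appeal to (4); this downstream repair is outside the scope of the present lemma but should be flagged alongside it.
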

\begin{proof}
For (1), from Courant minimax principle,
\[
    \lambda_k(A) = \max_{V\subset \mR^n ,\dim V = k} \min_{0\neq u\in V} \frac{u\matT A u}{u\matT u}.
\]
Since $A \preceq B \St u\matT A u \leq u\matT B u$, we conclude that $\lambda_k(A)\leq \lambda_k(B)$. For (2), notice
\[
    \forall u \in\mR^m,~ u\matT  CAC\matT  u = (C\matT u)\matT  A (C\matT u) \leq (C\matT u)\matT  B (C\matT u) = u\matT  CBC\matT  u.
\]
For (3), denote the eigen decomposition of $A$ is $A = Q \Lambda Q\matT $, then by definition $\psi(A) = Q \psi(\Lambda) Q\matT $ and $\varphi(A) = Q \varphi(\Lambda) Q\matT $. Notice
\[
    \psi(\Lambda) = \diag \{\psi(\lambda_1),\dots, \psi(\lambda_n)\} \preceq \diag \{\varphi(\lambda_1),\dots, \varphi(\lambda_n)\} = \varphi(\Lambda),
\]
since $\psi(\lambda_k)\leq \varphi(\lambda_k)$. By (2), we conclude that $\psi(A) \preceq \varphi(A)$. For the corollary, just notice we can modify $\psi(x) = \varphi(x)$ outside $[a,b]$ and $\psi(A), \varphi(A)$ are unchanged. For (4), notice from (1), $\lambda_k(A)\leq \lambda_k(B)$, the conclusion follows by (2) and the monotonicity of $\psi$ using the same claim.
\end{proof}

\begin{lemma}
\label{lem:Ccmp}
Let $A,B \in \mR^{n\times n}$ be two positive semidefinite matrices. If there exists
 $0<m\leq M, \st m I \preceq A \preceq MI$, then it holds
\begin{equation}
    B (I+M B)^{-2} \preceq (I+BA)^{-1} B (I+AB)^{-1} \preceq B (I+m B)^{-2}.
\end{equation}
\end{lemma}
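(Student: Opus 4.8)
The plan is to remove the non-commutativity between $A$ and $B$ by symmetrizing the middle expression with the square root $B^{1/2}$, after which the bound reduces to scalar monotonicity of $x\mapsto x^{-2}$. First I would record the two ``pull-through'' identities
\[
(I+BA)^{-1}B^{1/2} = B^{1/2}(I+\tilde A)^{-1}, \qquad B^{1/2}(I+AB)^{-1} = (I+\tilde A)^{-1}B^{1/2},\qquad \tilde A := B^{1/2}AB^{1/2},
\]
each of which follows from the elementary algebraic identity $B^{1/2}(I+AB)=(I+\tilde A)B^{1/2}$ (and its transpose $(I+BA)B^{1/2}=B^{1/2}(I+\tilde A)$) upon inverting. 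Writing $B=B^{1/2}B^{1/2}$ in the middle term and applying these identities gives the clean symmetric form
\[
(I+BA)^{-1}B(I+AB)^{-1} = B^{1/2}(I+\tilde A)^{-2}B^{1/2}.
\]

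Next I would transport the hypothesis $mI\preceq A\preceq MI$ to $\tilde A$. Conjugating by $B^{1/2}$ and using \cref{lem:PSDcmp}~(2) yields $mB\preceq \tilde A\preceq MB$, hence $I+mB\preceq I+\tilde A\preceq I+MB$. Since $\tilde A$ is PSD, all three matrices have spectrum in $[1,\infty)$, where $\psi(x)=x^{-2}$ is monotone decreasing; applying the monotonicity in \cref{lem:PSDcmp}~(4) (restricted to this interval exactly as in the corollary of \cref{lem:PSDcmp}~(3)) reverses the order to
\[
(I+MB)^{-2}\preceq (I+\tilde A)^{-2}\preceq (I+mB)^{-2}.
\]

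Finally I would conjugate this sandwich by $B^{1/2}$ once more, again invoking \cref{lem:PSDcmp}~(2). On the outer bounds the factors $(I+mB)^{-2}$ and $(I+MB)^{-2}$ are functions of $B$ and therefore commute with $B^{1/2}$, so that $B^{1/2}(I+mB)^{-2}B^{1/2}=B(I+mB)^{-2}$ and likewise for $M$, while the middle term stays in the form computed above. This produces exactly the asserted inequality. The only genuinely delicate step is the symmetrization: the non-commutativity of $A$ and $B$ makes the raw expression awkward, and the whole argument hinges on recognizing that $(I+BA)^{-1}B(I+AB)^{-1}$ collapses to the $B^{1/2}$-conjugate of a single function of $\tilde A$; everything afterward is a routine application of the comparison lemmas.
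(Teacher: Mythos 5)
Your argument follows essentially the same route as the paper's: symmetrize the middle term into $B^{1/2}(I+\tilde A)^{-2}B^{1/2}$ with $\tilde A=B^{1/2}AB^{1/2}$, transport $mI\preceq A\preceq MI$ to $mB\preceq\tilde A\preceq MB$ via \cref{lem:PSDcmp}~(2), apply \cref{lem:PSDcmp}~(4) to a decreasing function of the ordered matrices, and conjugate back by $B^{1/2}$, using that the outer bounds are functions of $B$ alone. Your pull-through identities are a genuine (if minor) improvement: they require only the invertibility of $I+BA$ and $I+\tilde A$ and never touch $B^{-1/2}$, so your argument covers singular $B$ directly, whereas the paper must first assume $B$ nonsingular and then pass to the limit $B_\epsilon=B+\epsilon I$, $\epsilon\to 0$.

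However, both your proof and the paper's rest on the same invalid step: \cref{lem:PSDcmp}~(4), as stated and as you invoke it, claims that any scalar-monotone $\psi$ preserves the Loewner order, which is only true for \emph{operator} monotone functions, and $x\mapsto x^{-2}$ (equivalently $(1+x)^{-2}$) is not one. From $I+mB\preceq I+\tilde A$ one may conclude $(I+\tilde A)^{-1}\preceq(I+mB)^{-1}$, but not $(I+\tilde A)^{-2}\preceq(I+mB)^{-2}$, because $\tilde A$ and $B$ need not commute. The gap is not merely cosmetic; the stated conclusion can fail. Take
\[
A=\begin{pmatrix}3/2&1/2\\1/2&3/2\end{pmatrix},\qquad B=\begin{pmatrix}1&0\\0&4\end{pmatrix},\qquad m=1,\quad M=2,
\]
so that $I\preceq A\preceq 2I$. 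A direct computation gives
\[
(I+BA)^{-1}B(I+AB)^{-1}=\frac{4}{1089}\begin{pmatrix}50&-19\\-19&29\end{pmatrix},\qquad B(I+mB)^{-2}=\diag(1/4,\,4/25),
\]
and the difference $B(I+mB)^{-2}-(I+BA)^{-1}B(I+AB)^{-1}$ has positive diagonal but determinant $\approx-1.32\times10^{-3}<0$, hence is indefinite: the claimed upper bound does not hold. So the flaw is inherited from the paper rather than introduced by you, but it means the lemma itself must be repaired — for instance by restricting to commuting situations, or by weakening the conclusion to the eigenvalue inequalities $\lambda_k(\cdot)$ of the two sides (which is what \cref{prop:ensemble_collapse} actually consumes, and which holds in the example above) — before either proof can be considered complete.
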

\begin{proof}
We first prove the inequality for nonsingular $B$. Notice
\begin{equation*}
\begin{split}
    &(I+BA)^{-1} B (I+AB)^{-1} \\
    =~& (B^{1/2}(I+B^{1/2}AB^{1/2})B^{-1/2})^{-1} B ( B^{-1/2}(I+B^{1/2}AB^{1/2})B^{1/2} )^{-1} \\
    =~& B^{1/2} ( I + B^{1/2} A B^{1/2})^{-2} B^{1/2}.
\end{split}
\end{equation*}
Using \cref{lem:PSDcmp} (2), since $m I \preceq A \preceq MI$, we obtain
\[
    m B \preceq B^{1/2} A B^{1/2} \preceq M B,
\]
As a result, using \cref{lem:PSDcmp} (4) for $\psi(x) = (1+x)^{-2}$,
\begin{equation*}
\begin{split}
    B^{1/2} ( I + MB )^{-2} B^{1/2} \preceq~& B^{1/2} ( I + B^{1/2} A B^{1/2})^{-2} B^{1/2} \\
    \preceq~& B^{1/2} ( I + mB )^{-2} B^{1/2}.
\end{split}
\end{equation*}
The conclusion follows by observing that the bounds are of functions of $B$ are thus the sequence of multiplication is unimportant. 

If $B$ is nonsingular, we can use the same argument to show  
\[
    B_\epsilon (I+M B_\epsilon)^{-2} \preceq (I+B_\epsilon A)^{-1} B_\epsilon (I+AB_\epsilon)^{-1} \preceq B_\epsilon (I+m B_\epsilon)^{-2}.
\]
with $B_\epsilon=B+\varepsilon I$ and take limit $\varepsilon\gto 0$.
\end{proof}

\section{Implementation of the linearization}
\label{app:linearize}
First notice it is not necessary to compute $H_n$ explicitly, since the DEKI scheme only requires its action on the ensemble deviation, i.e. $ H_n \tau_n^{(j)} = \begin{bmatrix} G_n \tau_n^{(j)} \\ \mcC_0^{-1/2} \tau_n^{(j)}
\end{bmatrix} $. So it suffices to compute $G_n \tau_n^{(j)}$. Denote 
\[
    T_n = [u_n^{(1)} -\mean{u}_n, \dots, u_n^{(J)} -\mean{u}_n] \in \mR^{d_u\times J},
\]
\[
    Y_n = [\mcG (u_n^{(1)}) -\mean{\mcG (u_n)}, \dots, \mcG (u_n^{(J)}) -\mean{\mcG (u_n)}] \in \mR^{d_y\times J},
\]
Perform the reduced QR decomposition on $T_n$ and $Y_n$: 
\[
    T_n = V_n Q_n, \quad V_n\in\mR^{d_u\times r}; \quad Y_n = W_n R_n, \quad W_n\in\mR^{d_y\times r'}.
\]
where $r$ and $r'$ are the ranks of $T_n$ and $Y_n$ respectively. Denote $\Lambda_n = Q_nQ_n\matT $, and then
\[
    (J-1)C_n^{uu} :=  T_n T_n\matT  = V_n Q_n Q_n\matT  V_n\matT  = V_n \Lambda_n V_n\matT, 
\]
\[
    (J-1)C_n^{yu} := Y_n T_n\matT  = W_n R_n Q_n\matT  V_n\matT, 
\]
\[
    \St ~ C_n^{yu} (C_n^{uu})^\dagger = W_n R_n Q_n\matT  \Lambda_n^{-1}  V_n\matT .
\]
It is direct to verify that the optimal $G_n$ to \cref{eq:linearize} is 
\[
    G_n = W_n \mcT_M( R_n Q_n\matT  \Lambda_n^{-1}) V_n\matT, 
\]
where $\mcT_M$ is the truncation operator on singular values: 
\[
    \mcT_M A = U \min\{ \Lambda , M \} V\matT , \quad \text{if } A = U \Lambda V\matT \text{ is the SVD}.
\]
Here $\min$ is performed entrieswise. Therefore, 
\[
    [G_n \tau_n^{(1)}, \dots, G_n \tau_n^{(J)}] = G_n T_n = W_n \mcT_M( R_n Q_n\matT  \Lambda_n^{-1}) Q_n. 
\]
The algorithm is summarized as below.
\begin{enumerate}
    \item Compute the deviation matrix $T_n$ and $Y_n$. 
    \item Perform the reduced QR decomposition: $T_n = V_n Q_n, Y_n = W_n R_n$.
    \item Compute $G_n T_n = W_n \mcT_M \Rectbrac{ R_n Q_n\matT  (Q_n Q_n\matT )^{-1} } Q_n$.
\end{enumerate}
The main computational cost is the reduced QR decomposition of $T_n,Y_n$ and  the SVD in the truncation operator. First notice that $R_n Q_n\matT \Lambda_n^{-1} \in \mR^{r' \times r}$, so that the SVD costs $\mcO(r^2 r') = \mcO(J^3)$ operations, which has no dimension dependence. For the reduced QR decomposition, since $T_n \in \mR^{d_u\times J}, Y_n \in \mR^{d_y\times J}$, the computational cost would be $\mcO( (d_u + d_y )J^2)$, which is linear in the dimension. Therefore, it is not a major computational cost in DEKI (see remarks after \cref{thm:convergence}).

\section{LEKI and localization}
\label{app:LEKI}
LEKI method is proposed in \cite{MR4580673}. We use the following discrete-time scheme:
\[
u_{n+1}^{(j)} = u_{n}^{(j)} + h_n \tilde{C}_n^{uz} \Brac{ \mcH(u_n^{(j)}) - z } + \sigma^2 \xi_n^{(j)}.
\]
The localized covariance is defined via a localization matrix $\Psi \in \mR^{d_z\times d_u}$ and
\[
\tilde{C}_n^{uz} = \Psi \circ C_n^{uz}.
\]
And $\xi_n^{(j)}$ is some artificial noise chosen following \cite{MR4580673} so that
\[
\frac{1}{J-1} \sum_{j=1}^J \Brac{ \xi_n^{(j)} \otimes \tau_n^{(j)} + \tau_n^{(j)} \otimes \xi_n^{(j)} } = \Sigma_n,
\]
where the diagonal entries of $\Sigma_n$ are all $1$. The noise level is fixed $\sigma=10^{-3}$.

For the linear transport model, we compare two different localization designs. Both uses the Gaspari Cohn function $\psi$ \cite{2001MWRv..129..123H}, and 
\[
\Psi(i,j) = \psi(|i-i_{\rm o}(j)|/r_{\rm \rm loc}),
\]
where $i_{\rm o}(j)$ is the coordinate of the parameter that mostly influences the $j$-th observation, and $r_{\rm loc}$ is the localization radius, see \cite{MR4580673}. The one with knowledge of the speed uses $r_{\rm loc} = 1.5$ and
\[
    i_{\rm o}(j) = \lfloor d_y (i/d_u + a  ) \rfloor.
\]
And the one without the knowledge simply chooses $i_{\rm o}(j) = j$ with the same localization radius.

\end{document}